\newtheorem{thm}{Theorem}[section]
\newtheorem{lem}[thm]{Lemma}
\newtheorem{corollary}[thm]{Corollary}
\newtheorem{proposition}[thm]{Proposition}
\newtheorem{defn}[thm]{Definition}
\newtheorem{rem}[thm]{Remark}
\newcommand{\Across}{\raisebox{-0.25\height}{\includegraphics[width=0.7cm]{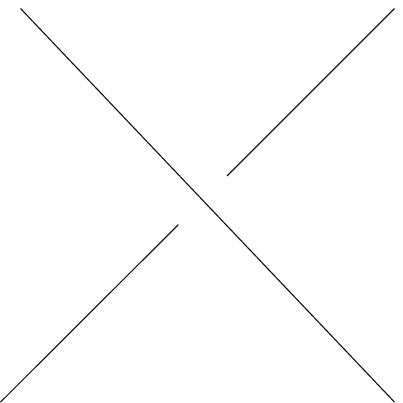}}}
\newcommand{\Bcross}{\raisebox{-0.25\height}{\includegraphics[width=0.7cm]{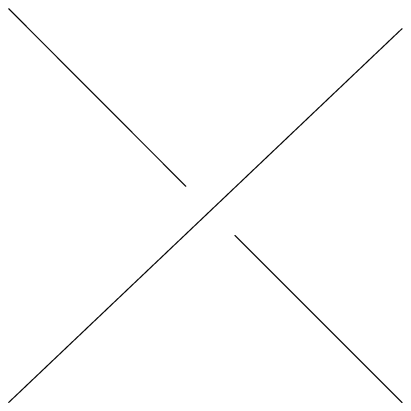}}}
\newcommand{\Asmooth}{\raisebox{-0.25\height}{\includegraphics[width=0.8cm]{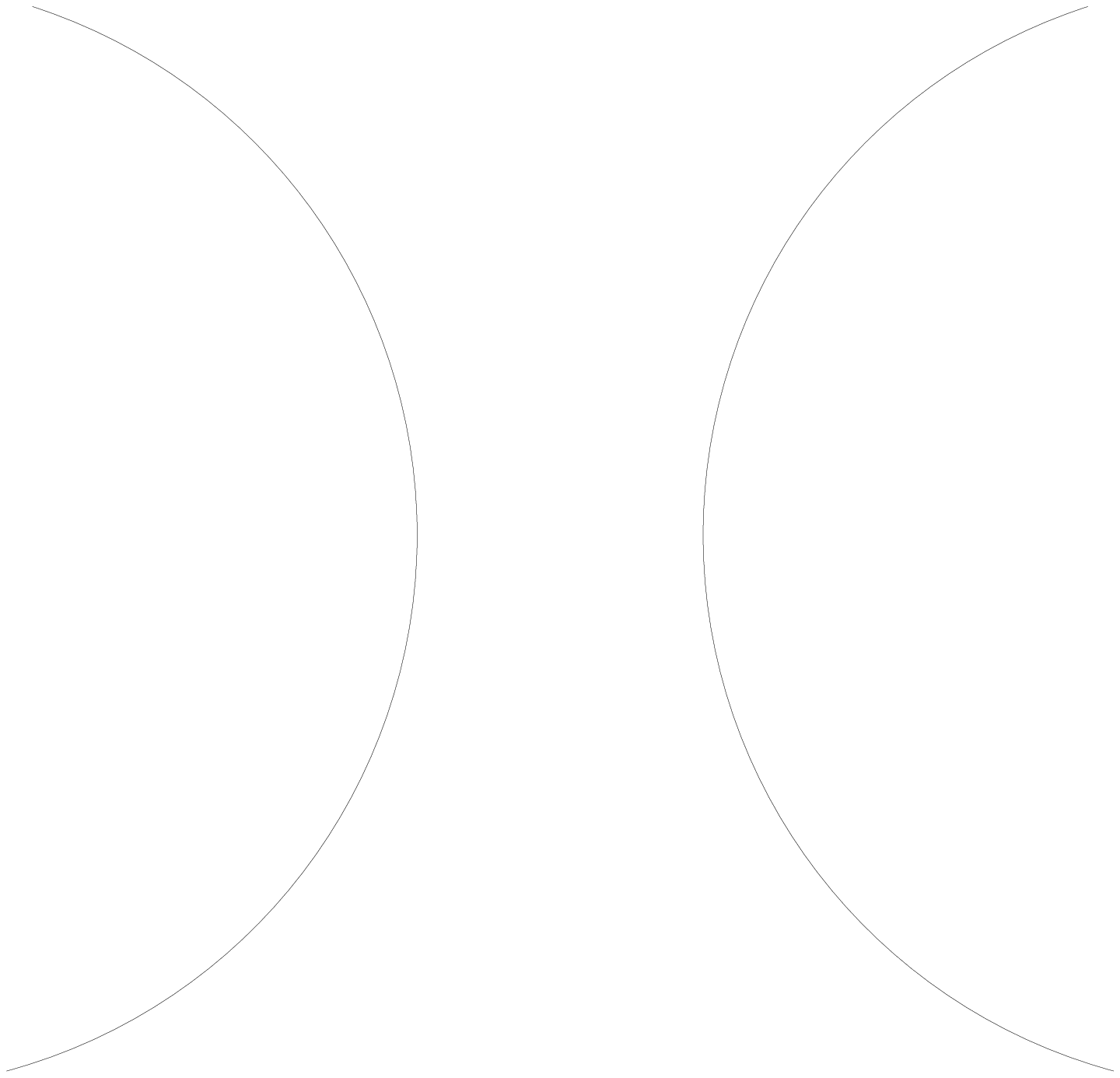}}}
\newcommand{\Bsmooth}{\raisebox{-0.25\height}{\includegraphics[width=0.8cm]{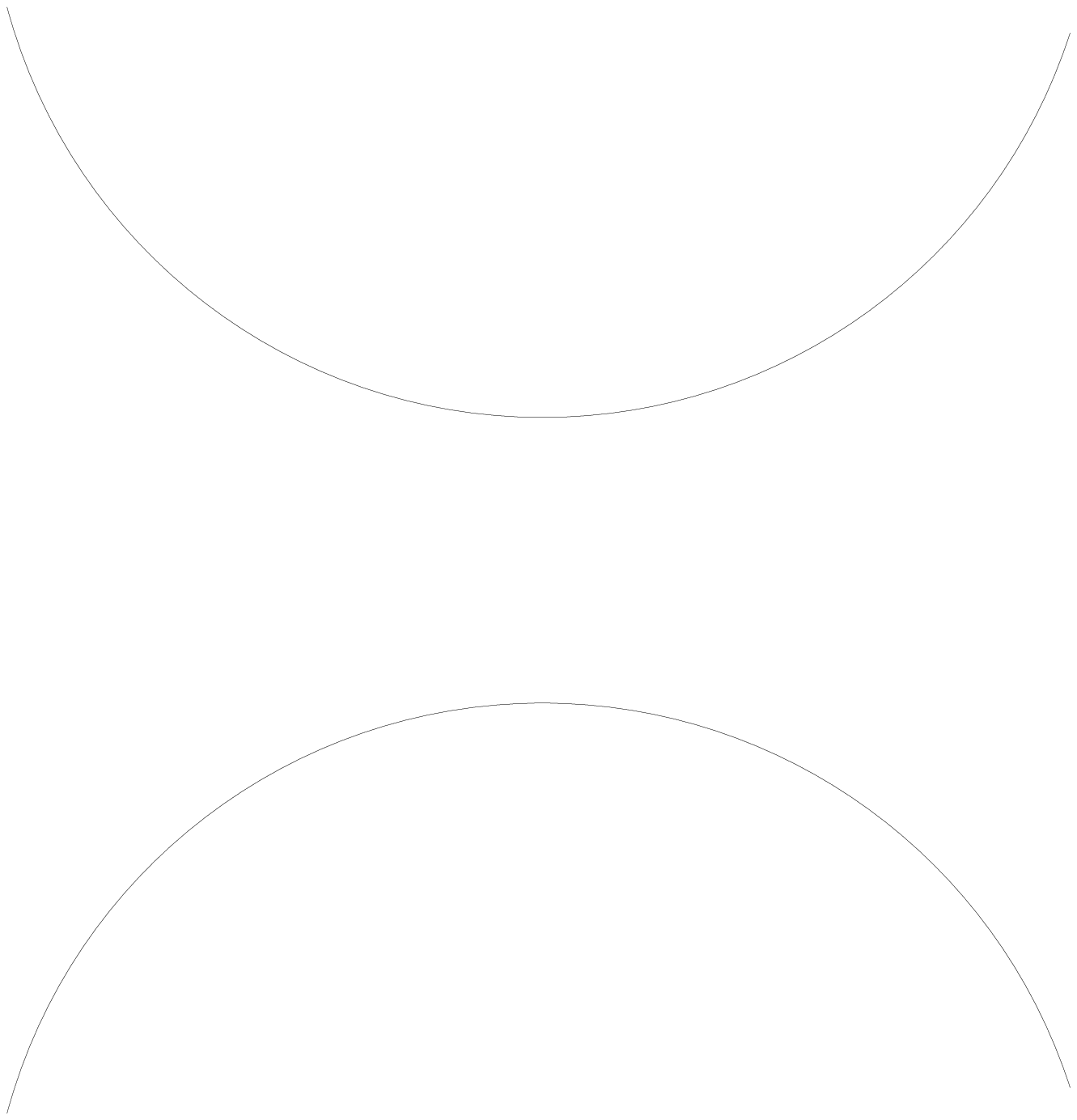}}}
\newcommand{\onevertex}{\raisebox{-0.25\height}{\includegraphics[width=0.8cm]{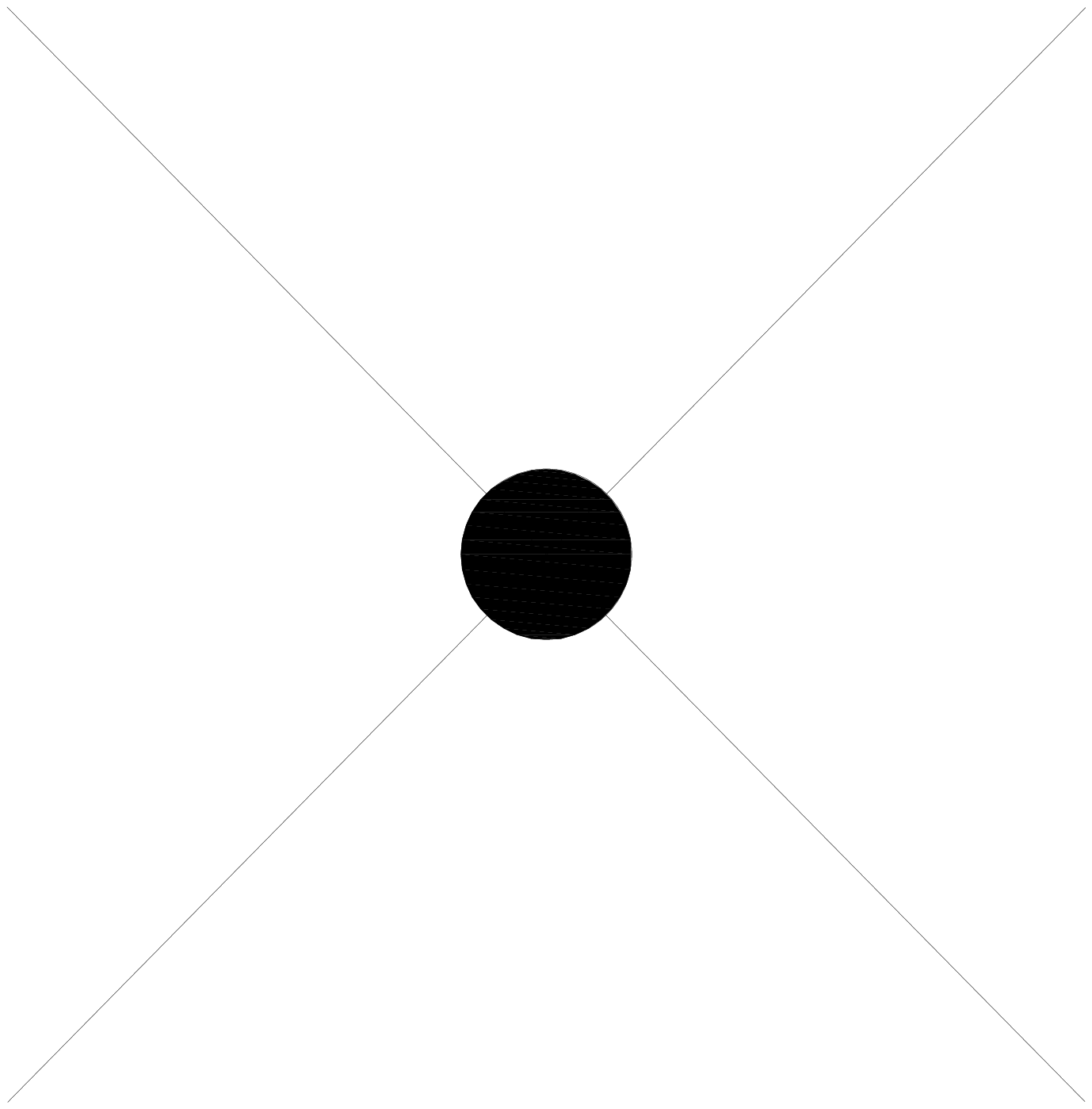}}}
\newcommand{\downvertexloop}{\raisebox{-0.25\height}{\includegraphics[width=0.8cm]{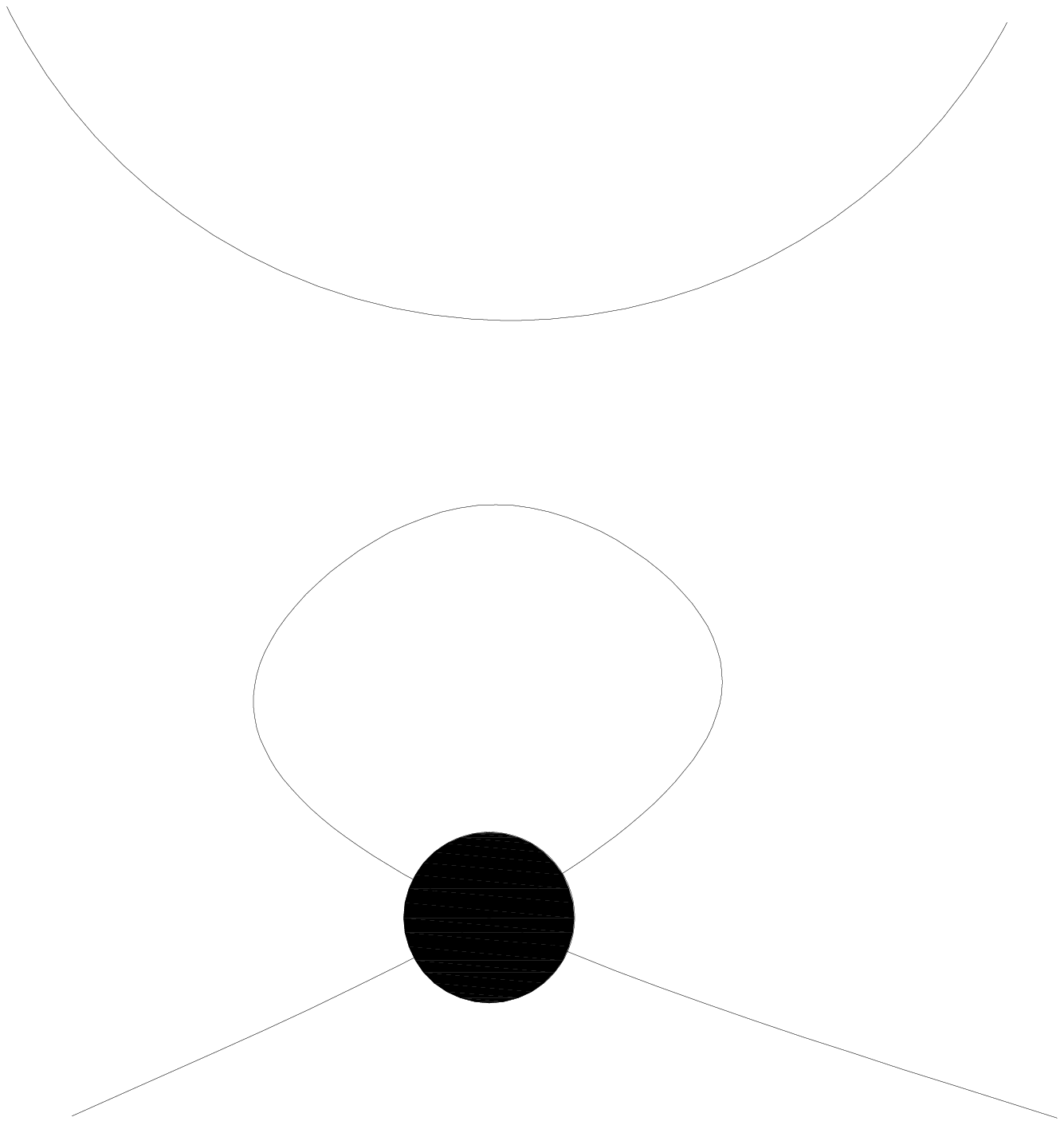}}}
\newcommand{\upvertexloop}{\raisebox{-0.25\height}{\includegraphics[width=0.8cm]{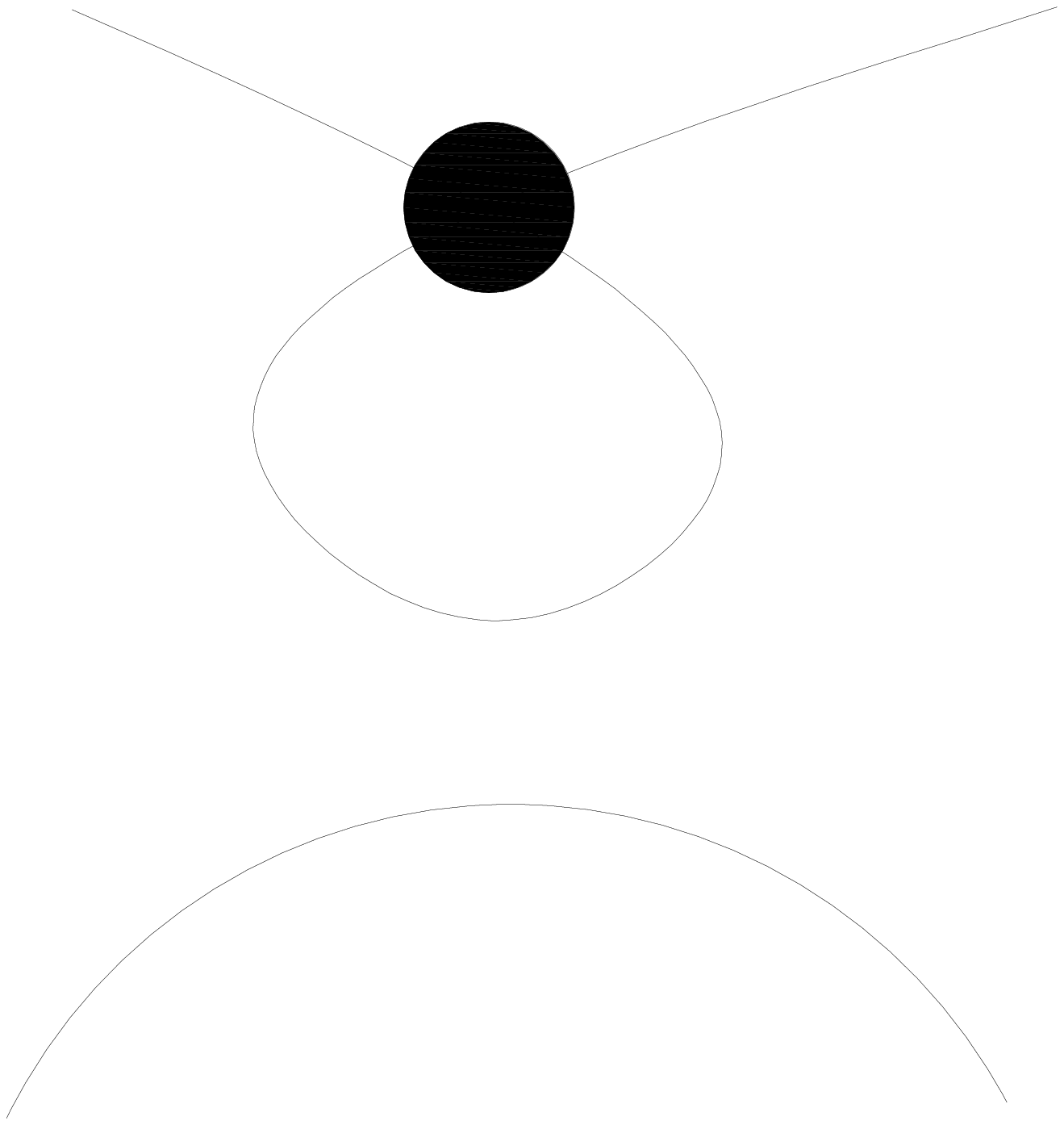}}}
\newcommand{\twovertex}{\raisebox{-0.25\height}{\includegraphics[width=0.8cm]{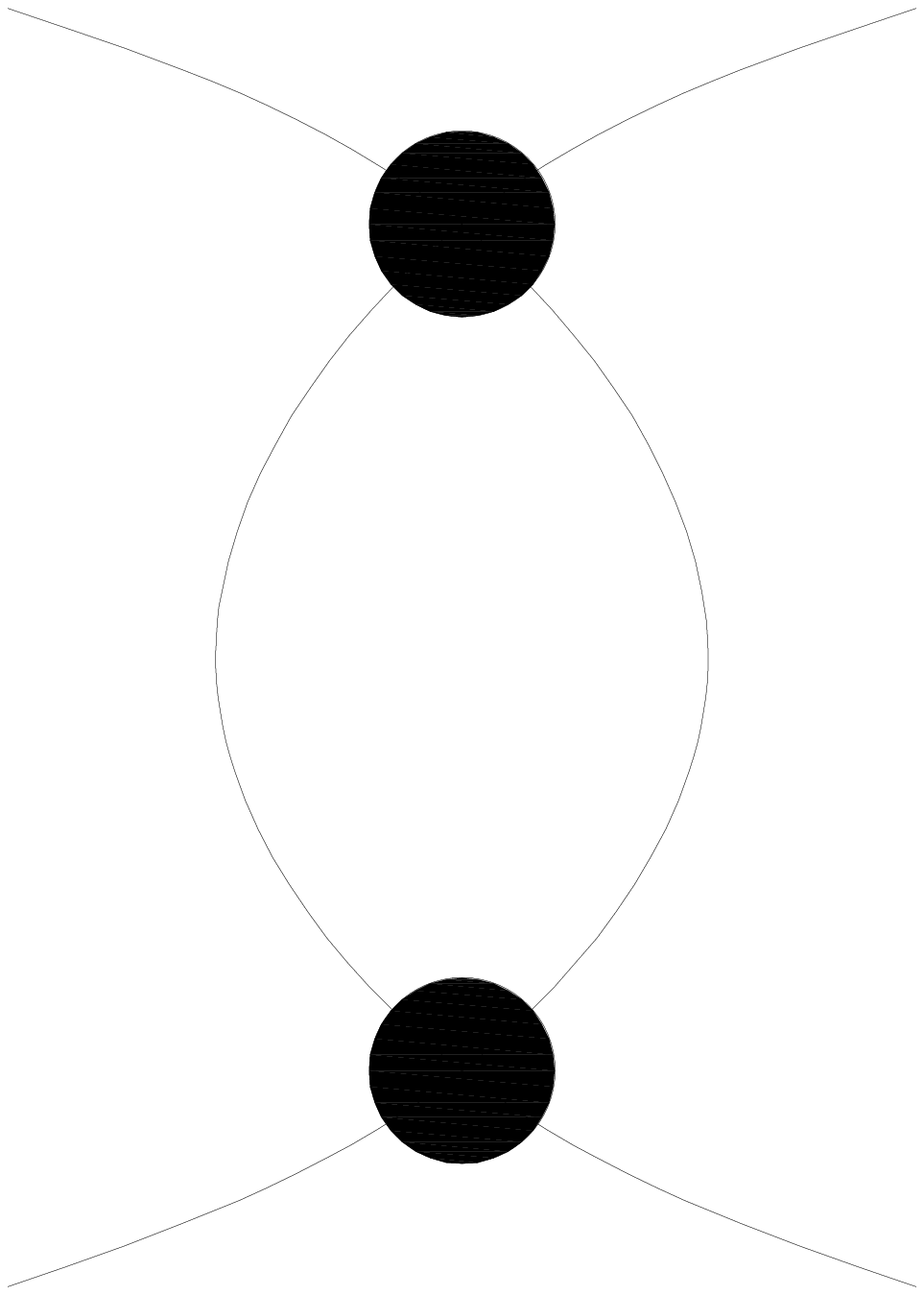}}}
\newcommand{\oneedge}{\raisebox{-0.25\height}{\includegraphics[width=0.8cm]{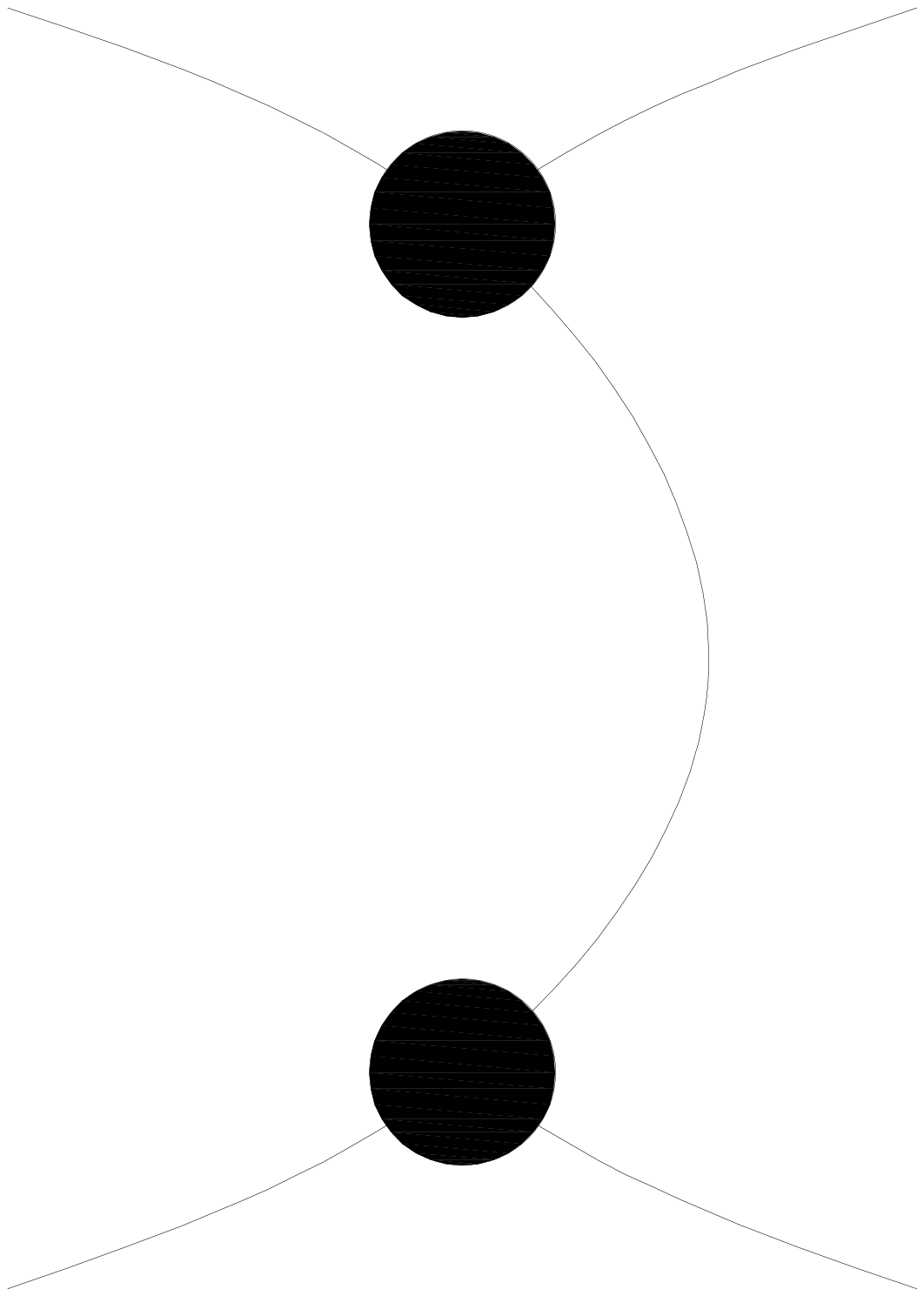}}}
\newcommand{\markedge}{\raisebox{-0.25\height}{\includegraphics[width=0.4cm]{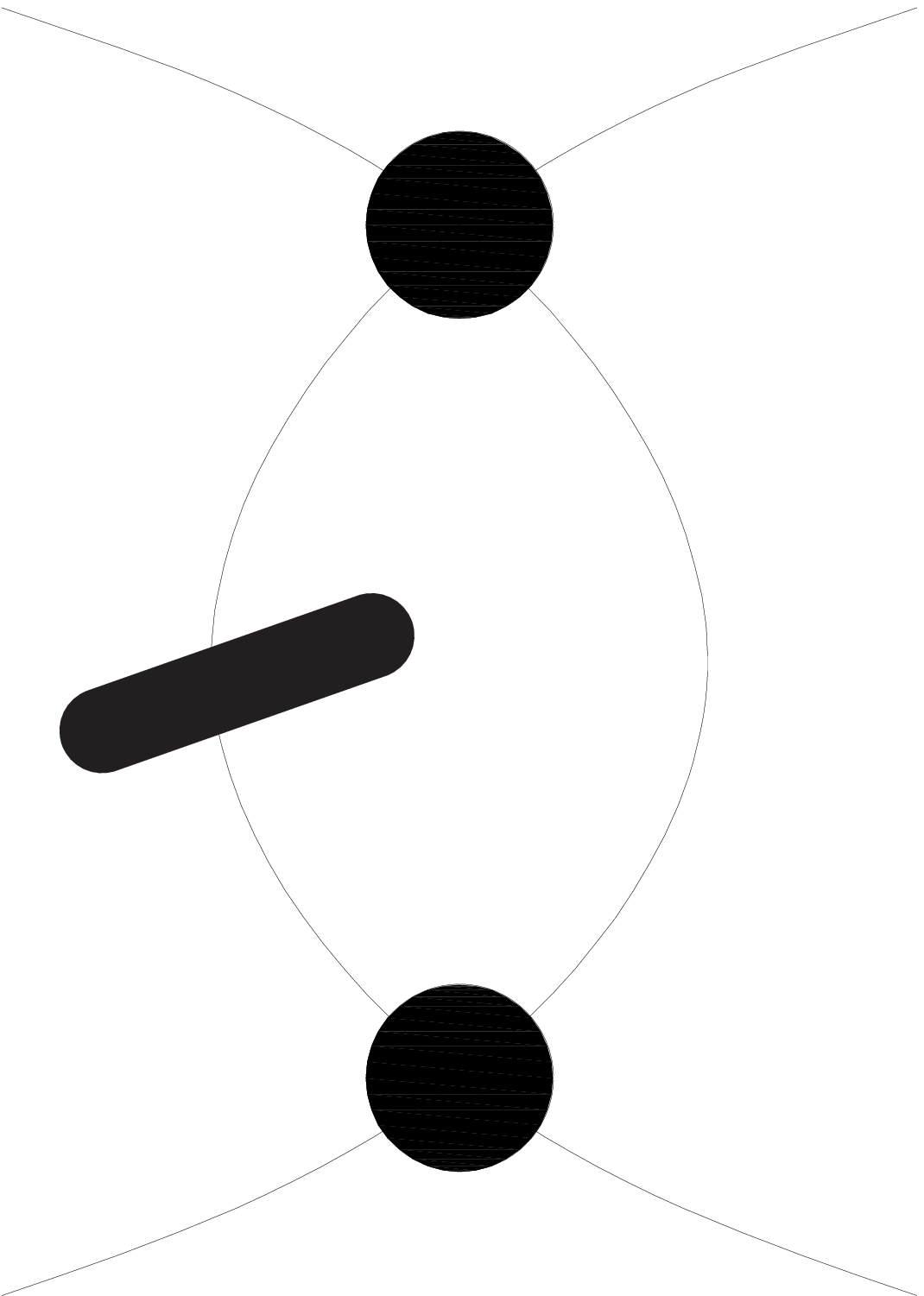}}}
\newcommand{\Bsmoothvertex}{\raisebox{-0.25\height}{\includegraphics[width=0.8cm]{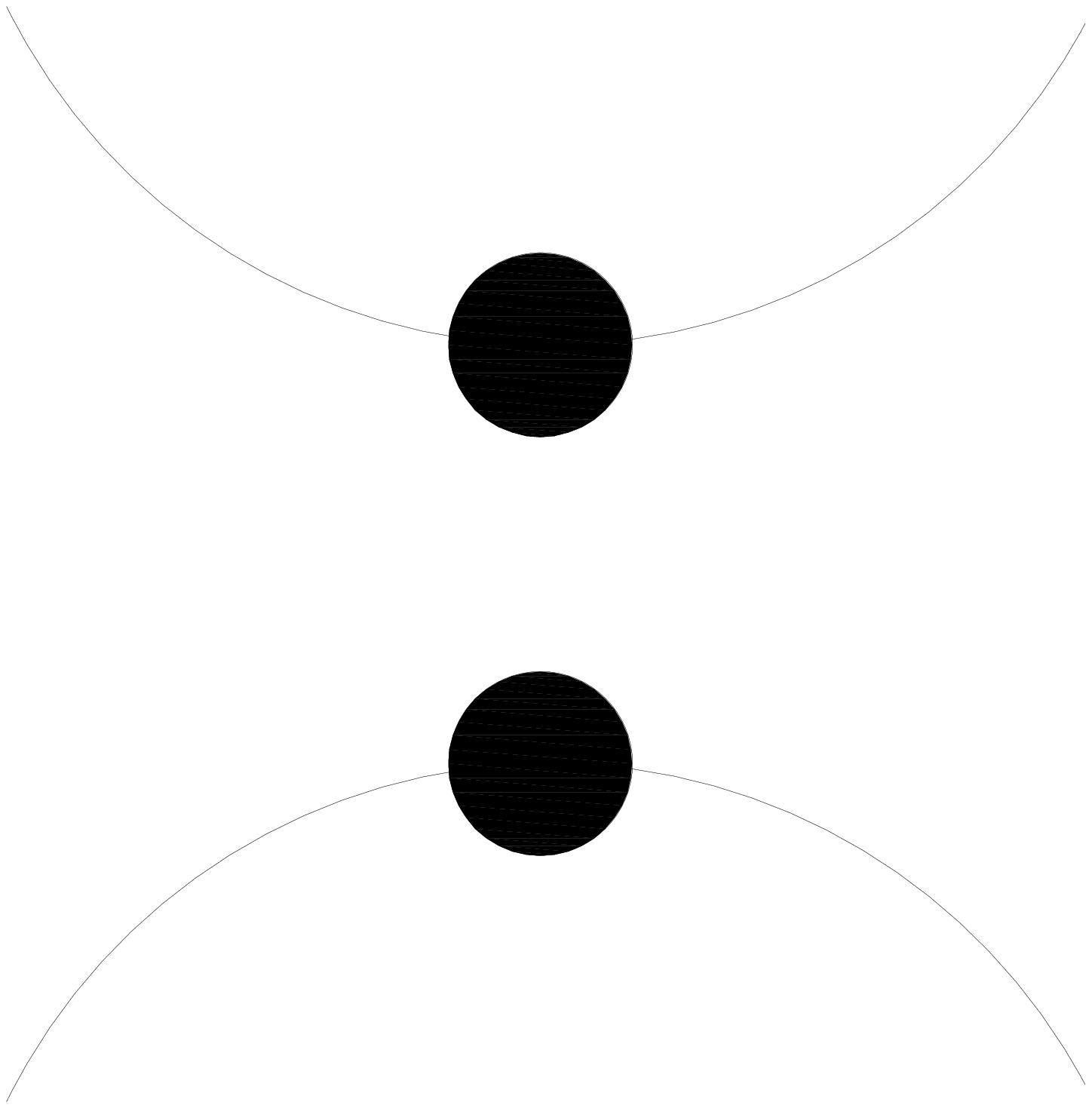}}}
\newcommand{\onerightmark}{\raisebox{-0.25\height}{\includegraphics[width=0.4cm]{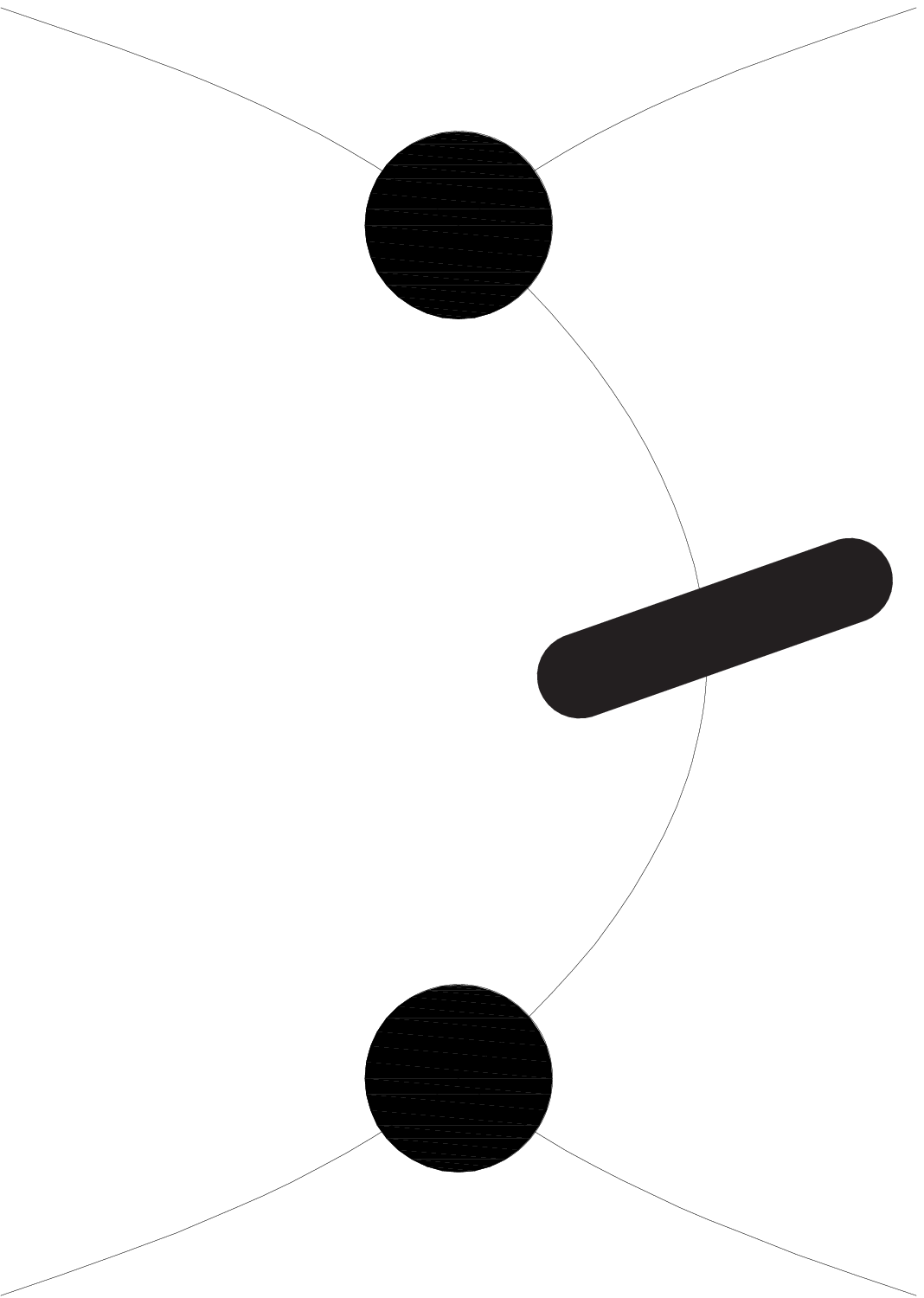}}}
\newcommand{\oneleftmark}{\raisebox{-0.25\height}{\includegraphics[width=0.4cm]{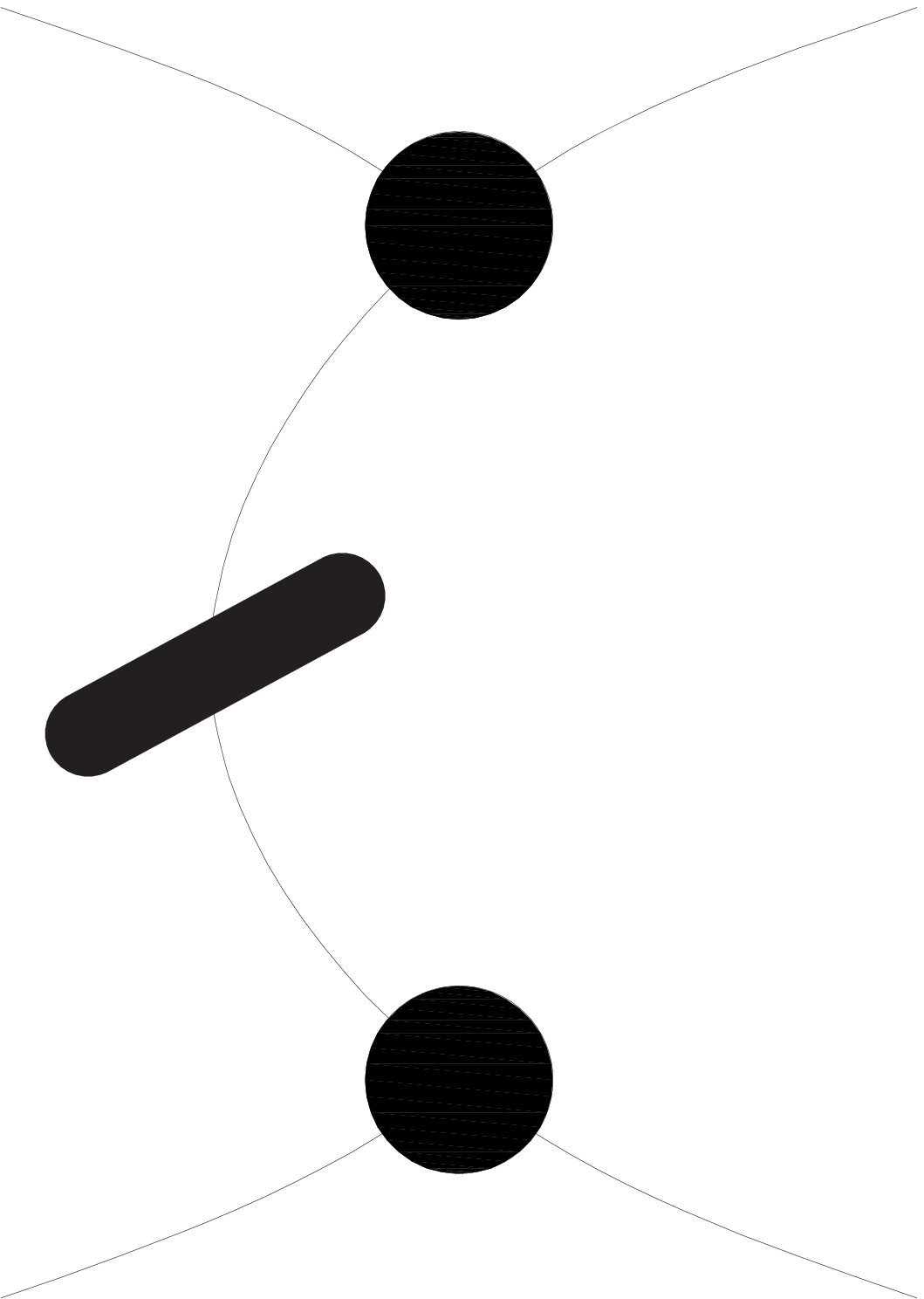}}}
\newcommand{\twomark}{\raisebox{-0.25\height}{\includegraphics[width=0.4cm]{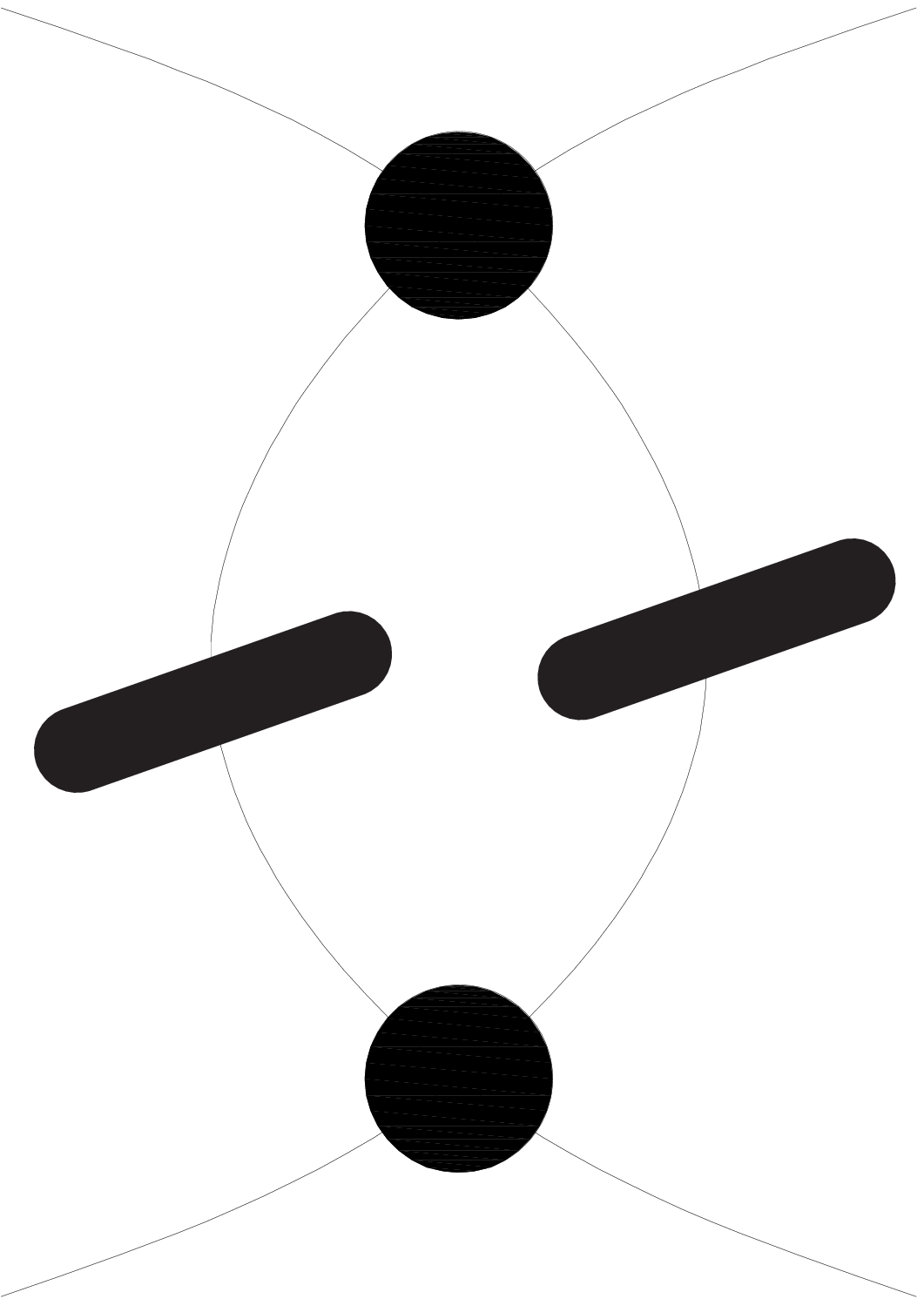}}}
\newcommand{\loopvertex}{\raisebox{-0.25\height}{\includegraphics[width=0.8cm]{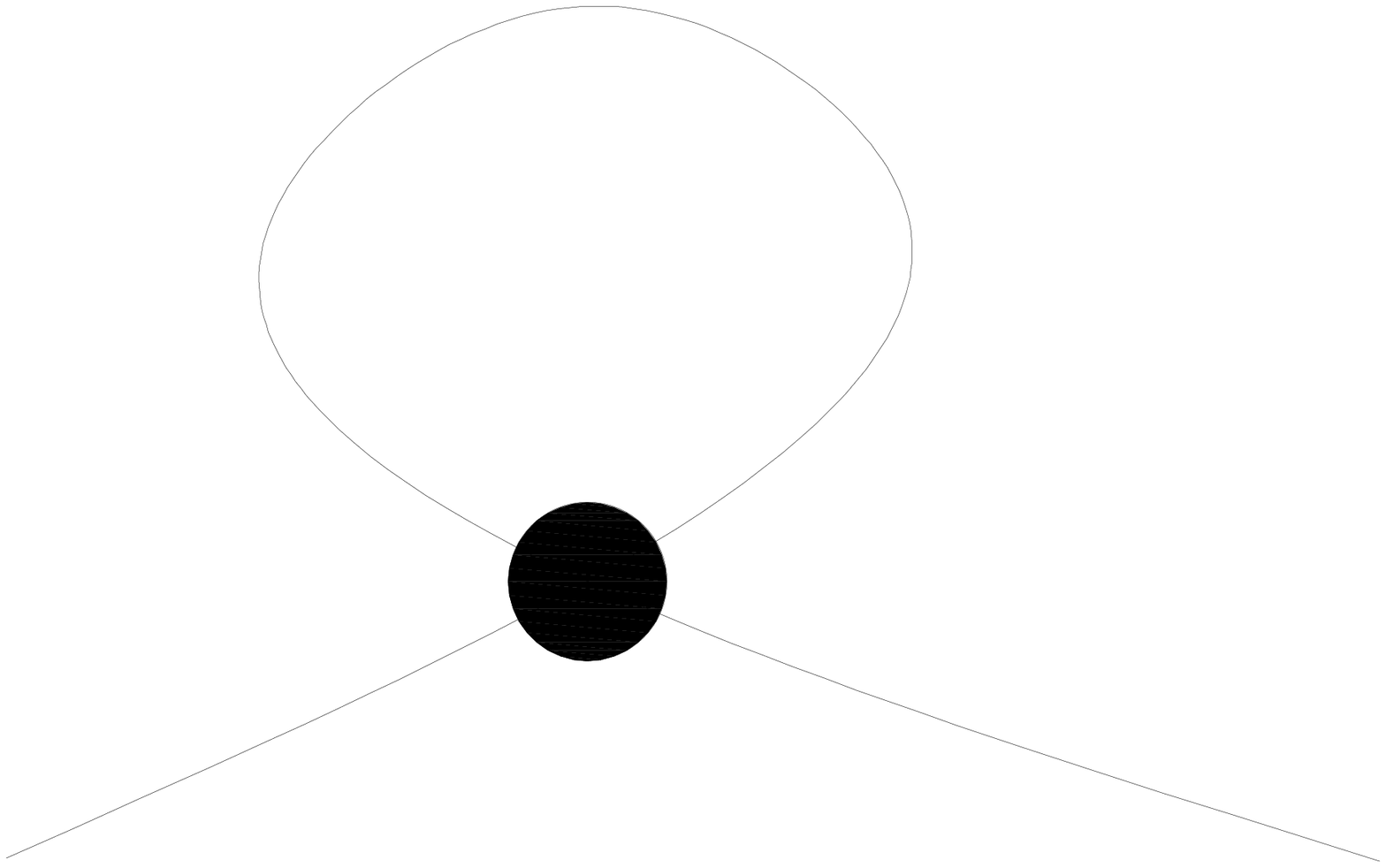}}}
\newcommand{\downarc}{\raisebox{-0.25\height}{\includegraphics[width=0.8cm]{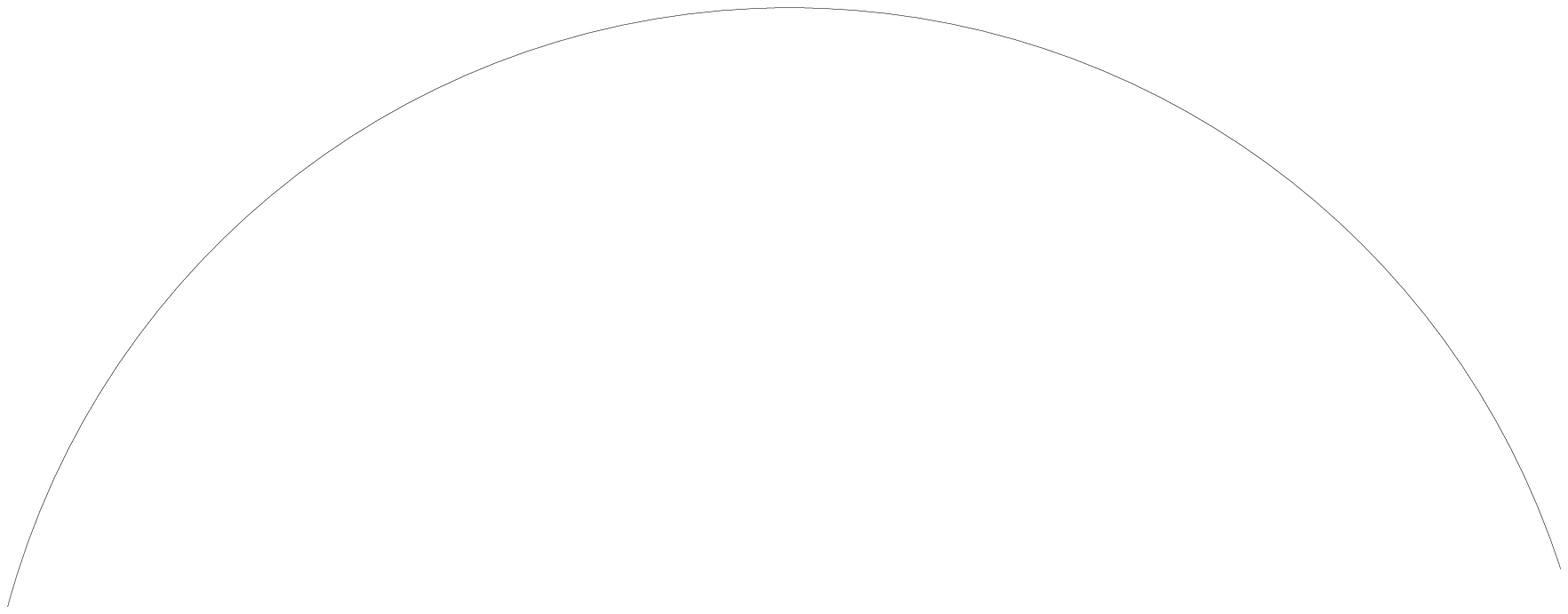}}}
\newcommand{\threevertex}{\raisebox{-0.25\height}{\includegraphics[width=0.8cm]{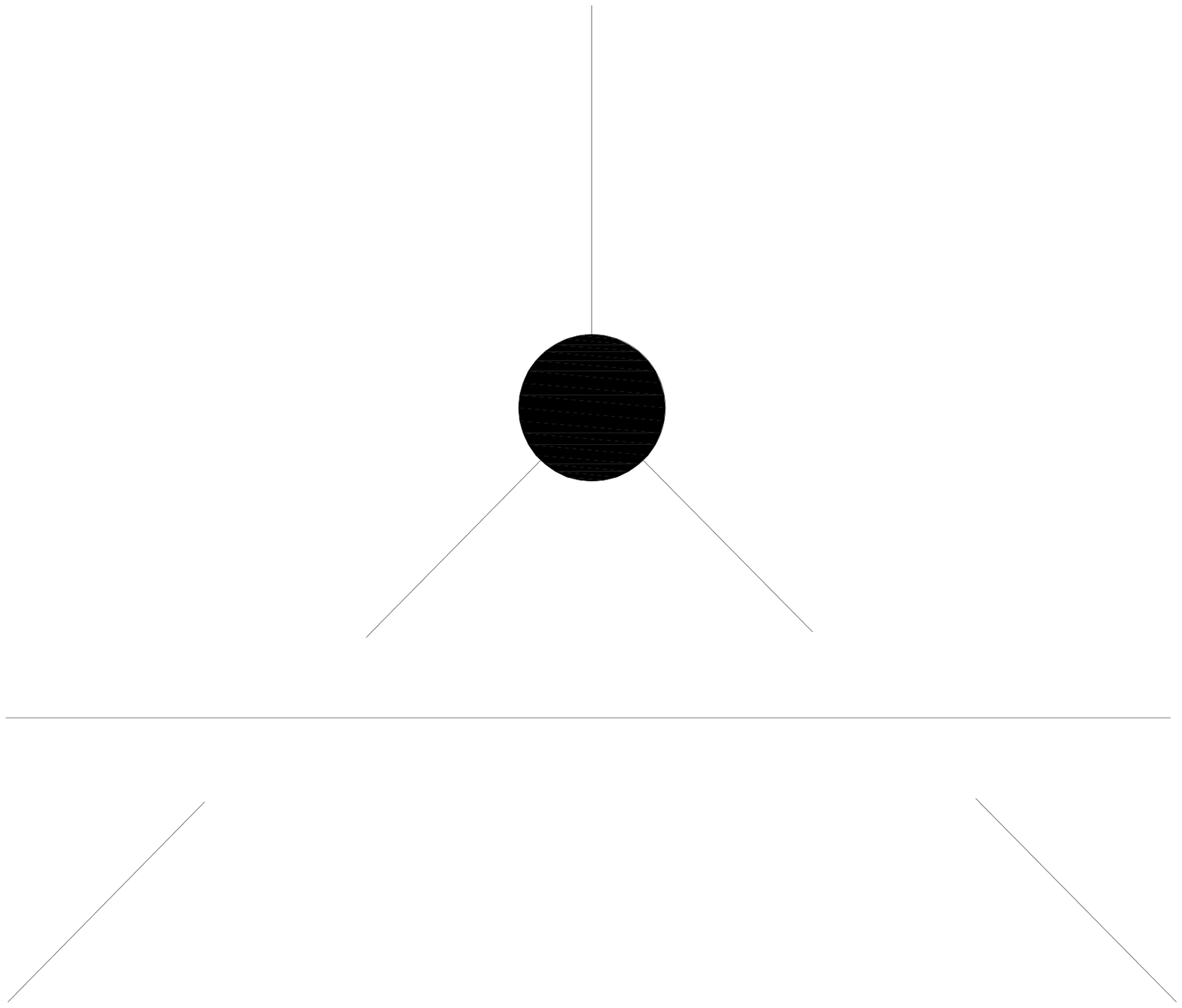}}}
\newcommand{\aav}{\raisebox{-0.25\height}{\includegraphics[width=0.8cm]{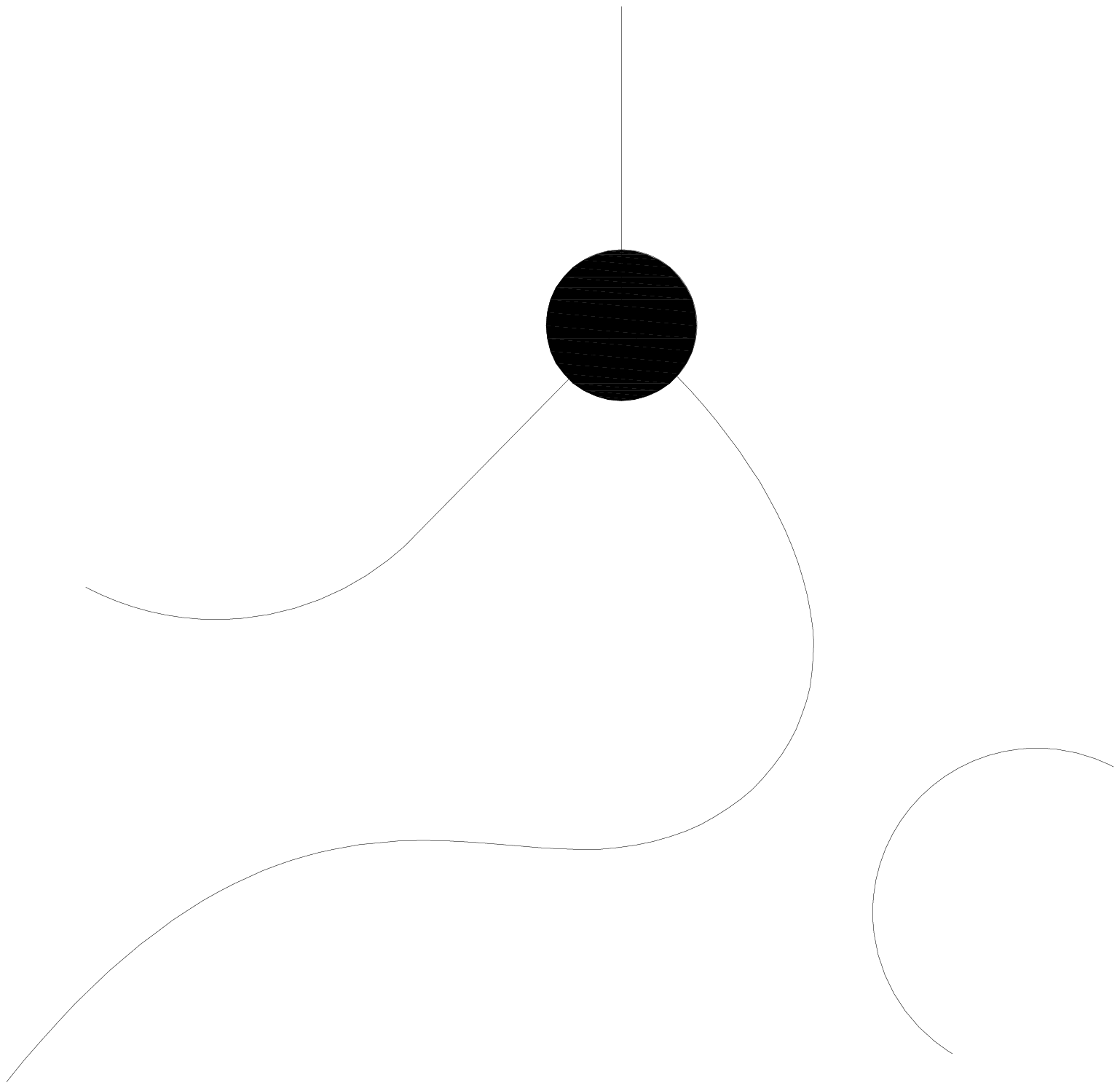}}}
\newcommand{\bbv}{\raisebox{-0.25\height}{\includegraphics[width=0.8cm]{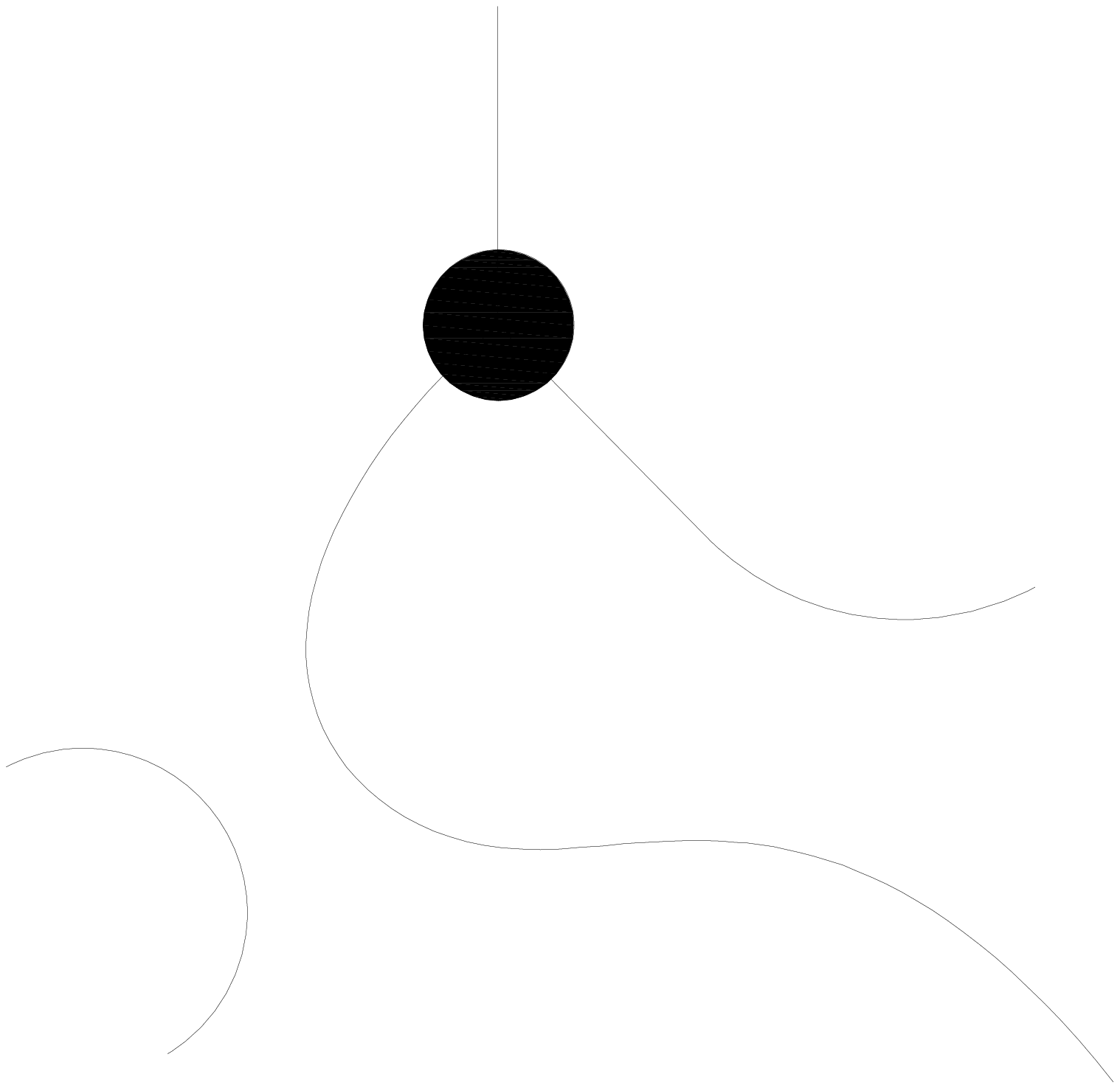}}}
\newcommand{\abv}{\raisebox{-0.25\height}{\includegraphics[width=0.8cm]{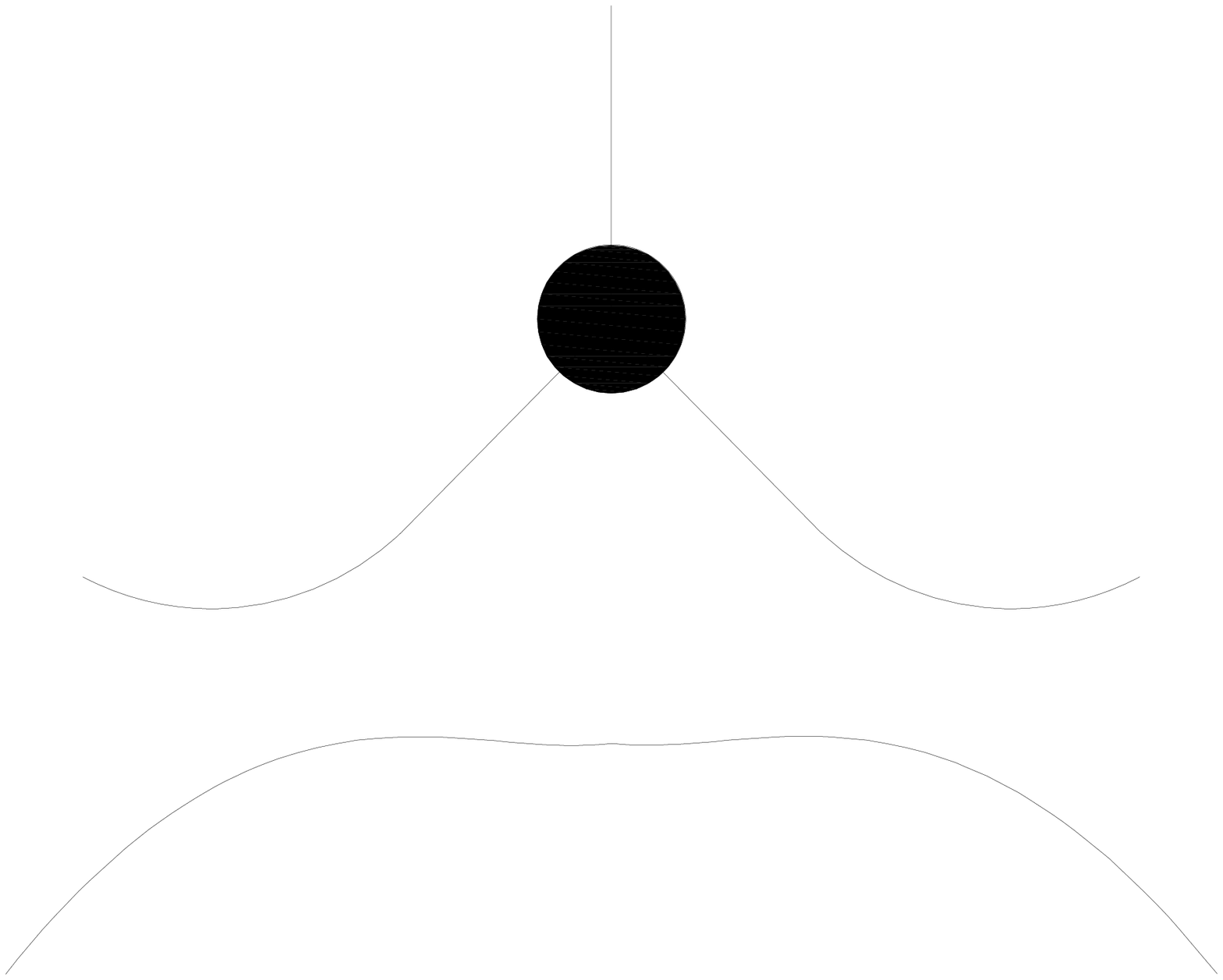}}}
\newcommand{\bav}{\raisebox{-0.25\height}{\includegraphics[width=0.8cm]{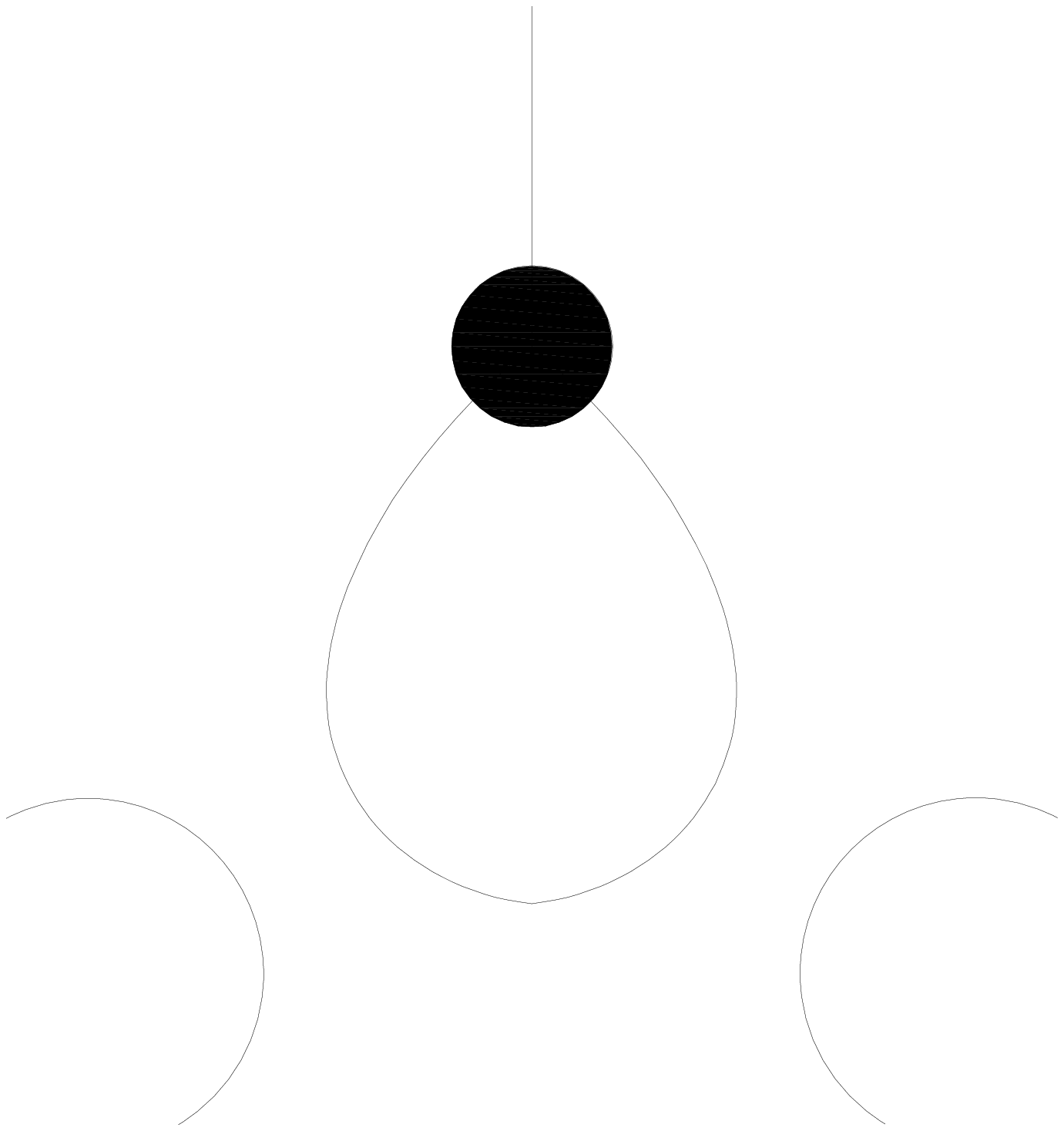}}}
\newcommand{\av}{\raisebox{-0.25\height}{\includegraphics[width=0.8cm]{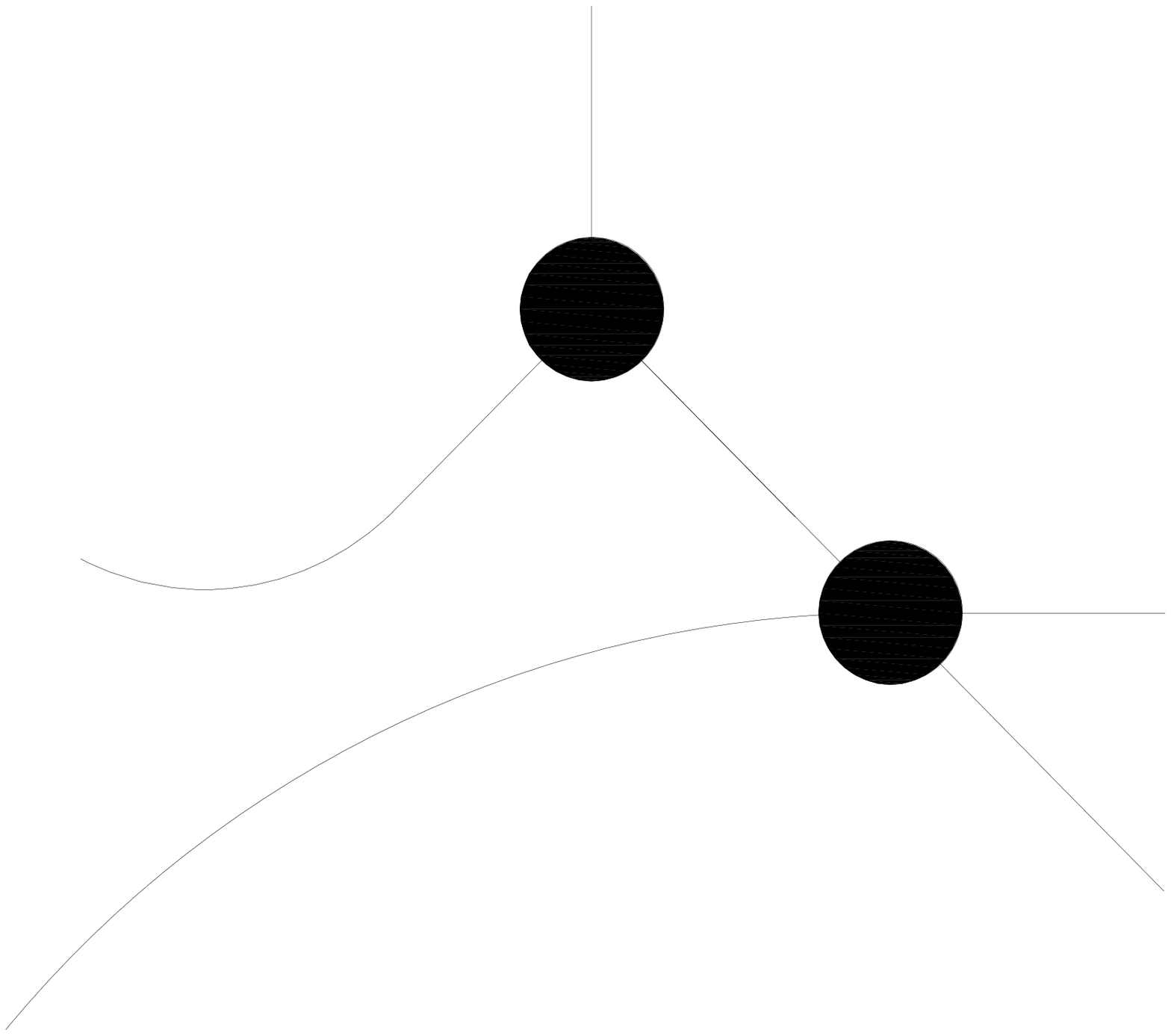}}}
\newcommand{\bv}{\raisebox{-0.25\height}{\includegraphics[width=0.8cm]{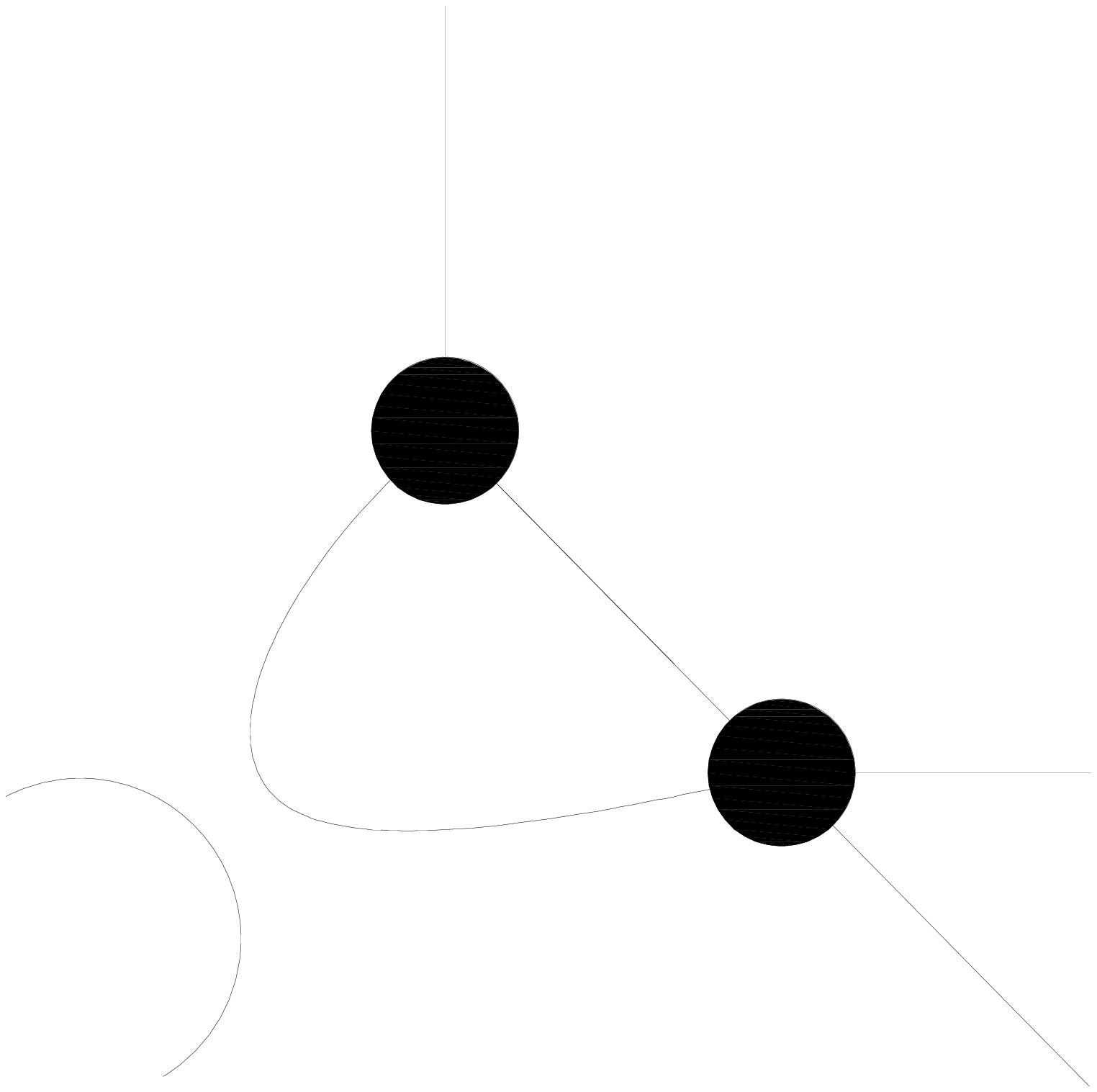}}}
\newcommand{\vb}{\raisebox{-0.25\height}{\includegraphics[width=0.8cm]{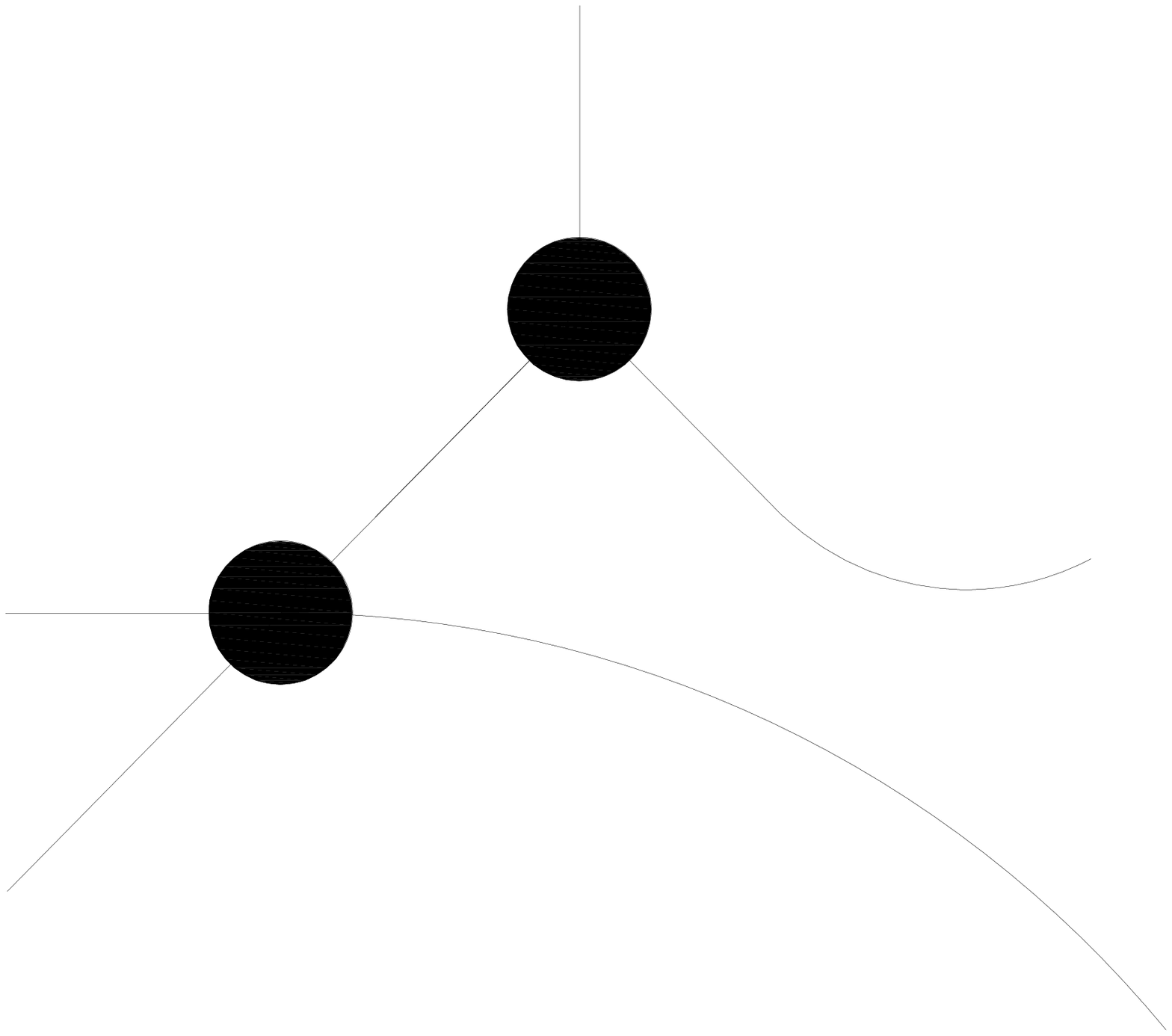}}}
\newcommand{\va}{\raisebox{-0.25\height}{\includegraphics[width=0.8cm]{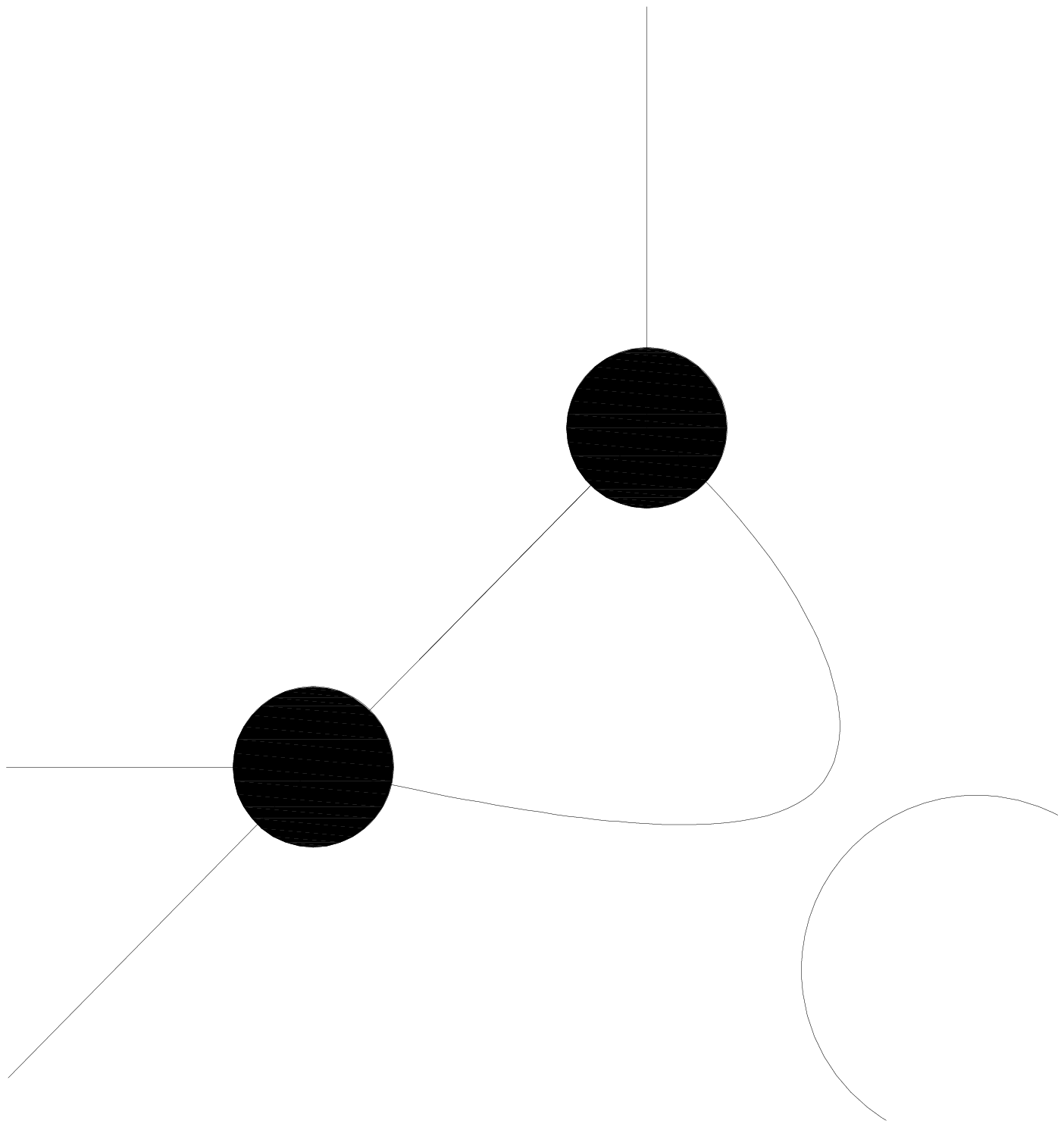}}}
\newcommand{\vv}{\raisebox{-0.25\height}{\includegraphics[width=0.8cm]{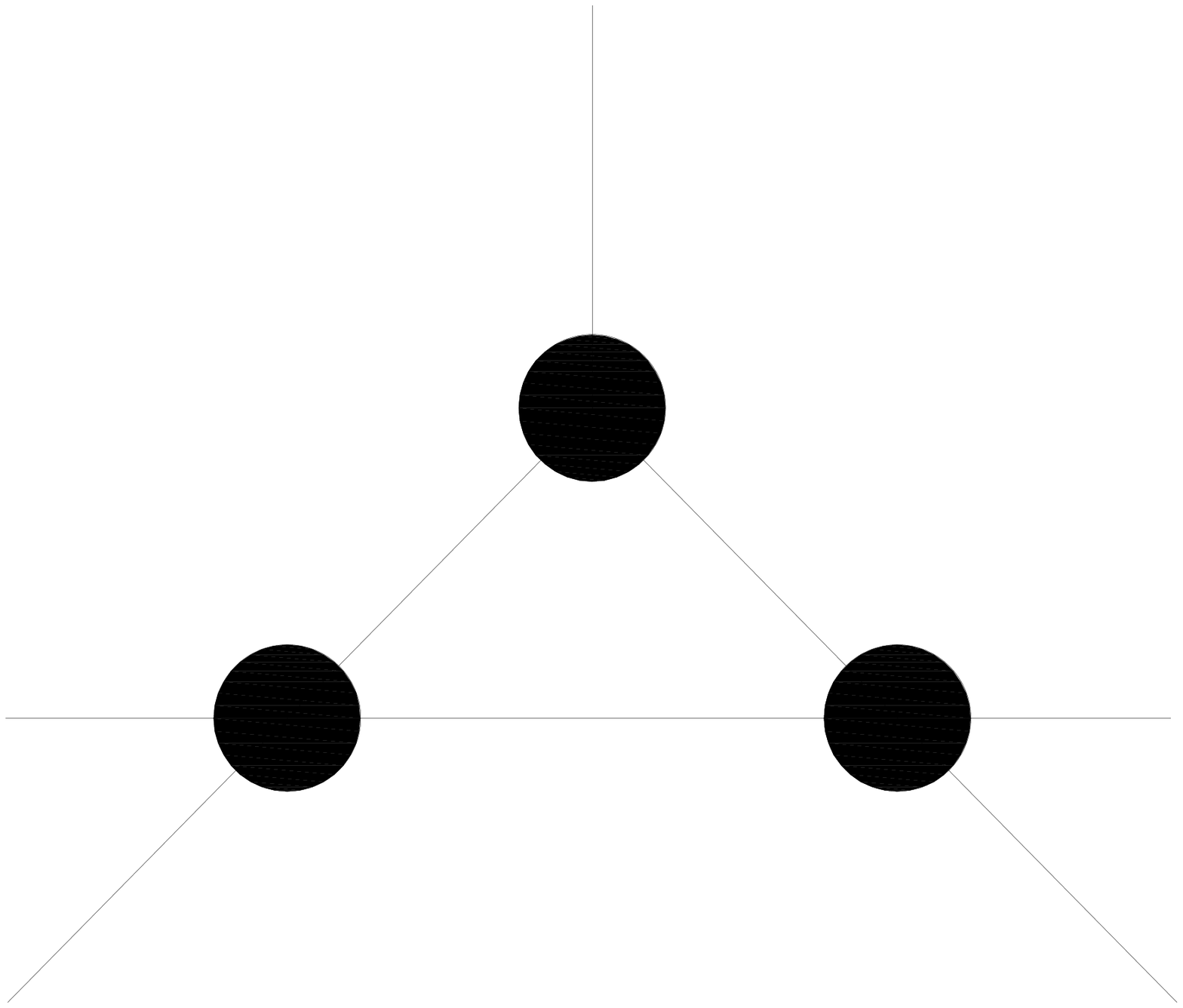}}}
\newcommand{\twothree}{\raisebox{-0.25\height}{\includegraphics[width=0.8cm]{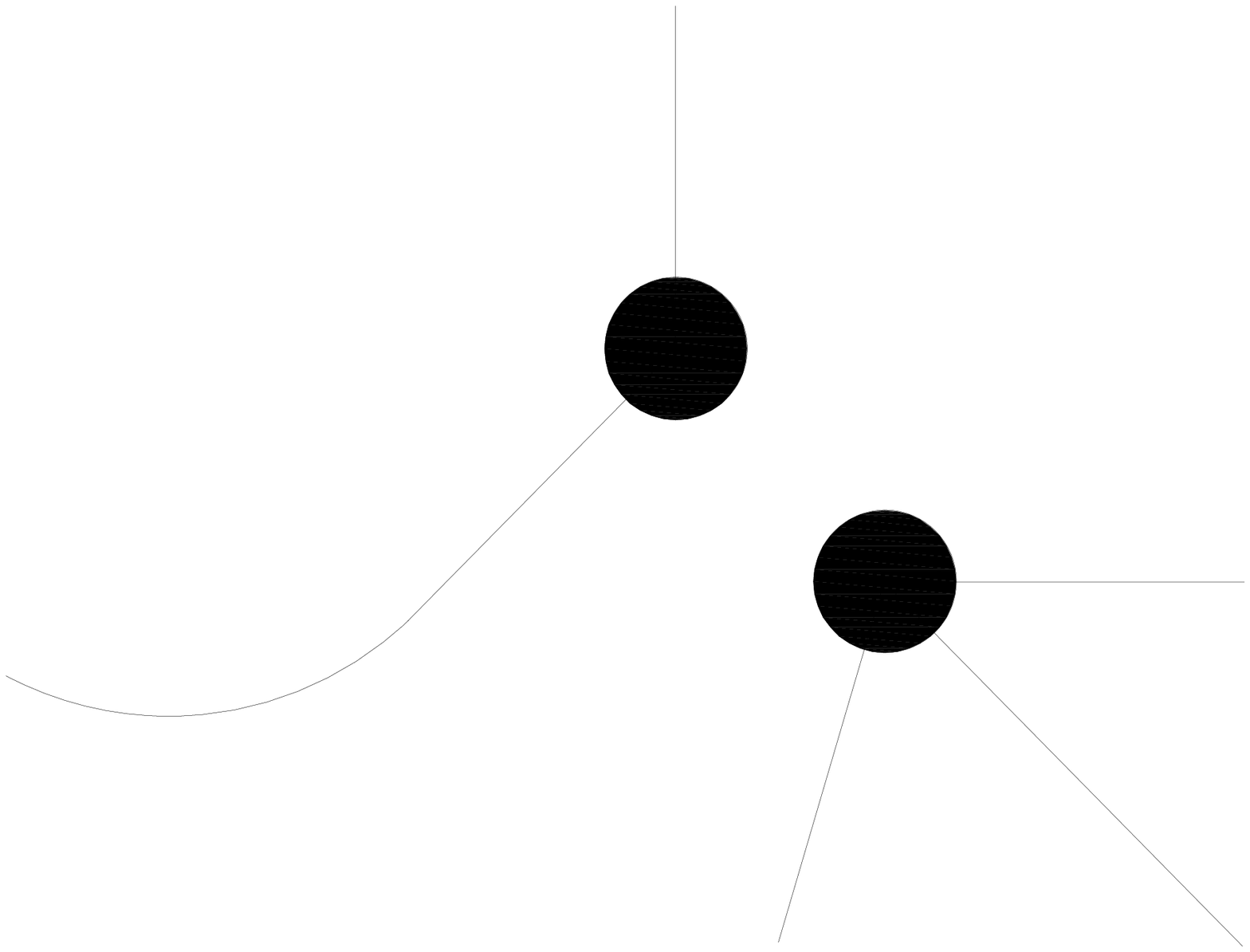}}}
\newcommand{\onearc}{\raisebox{-0.25\height}{\includegraphics[width=1.0cm]{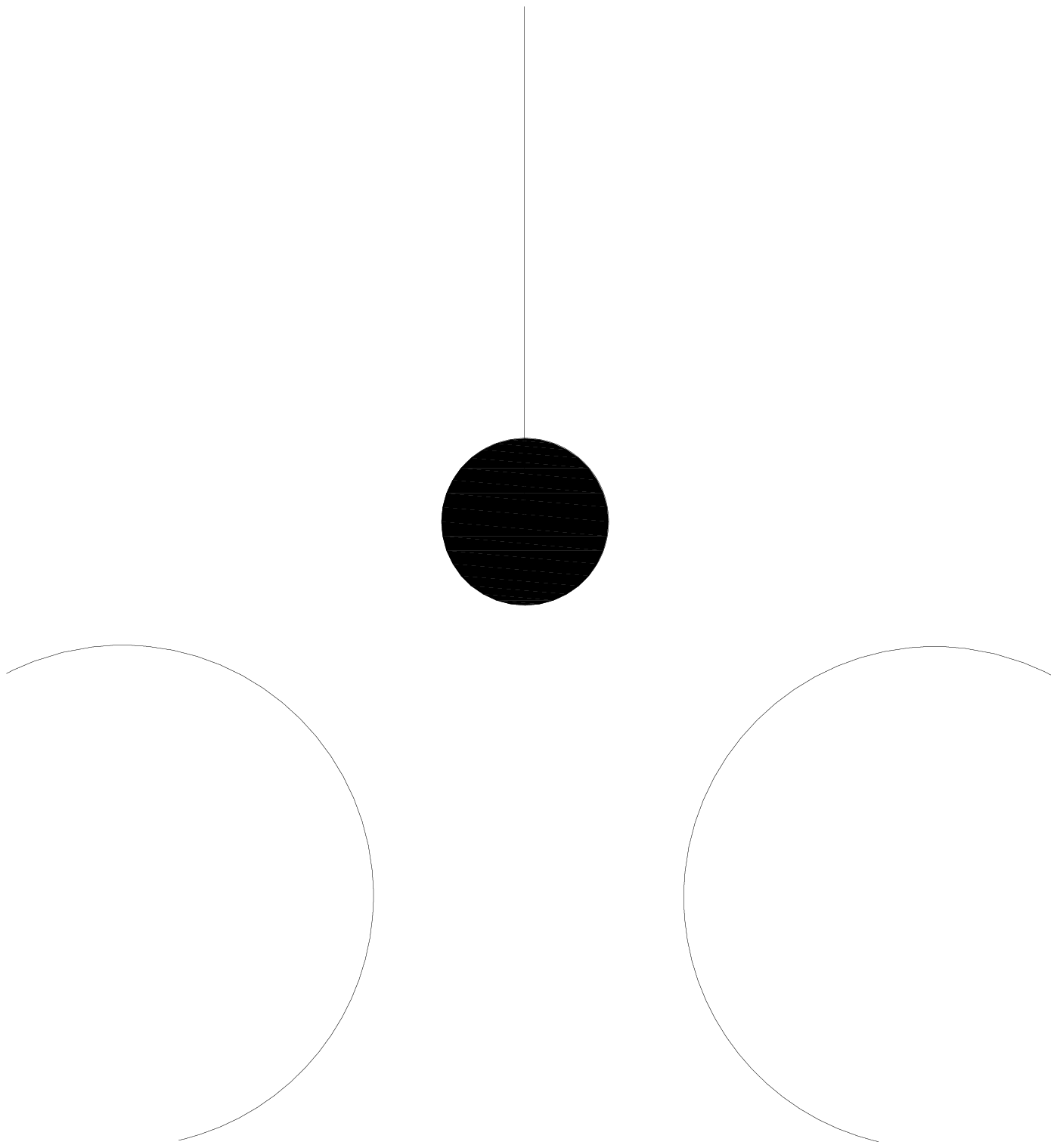}}}
\newcommand{\threetwo}{\raisebox{-0.25\height}{\includegraphics[width=1.0cm]{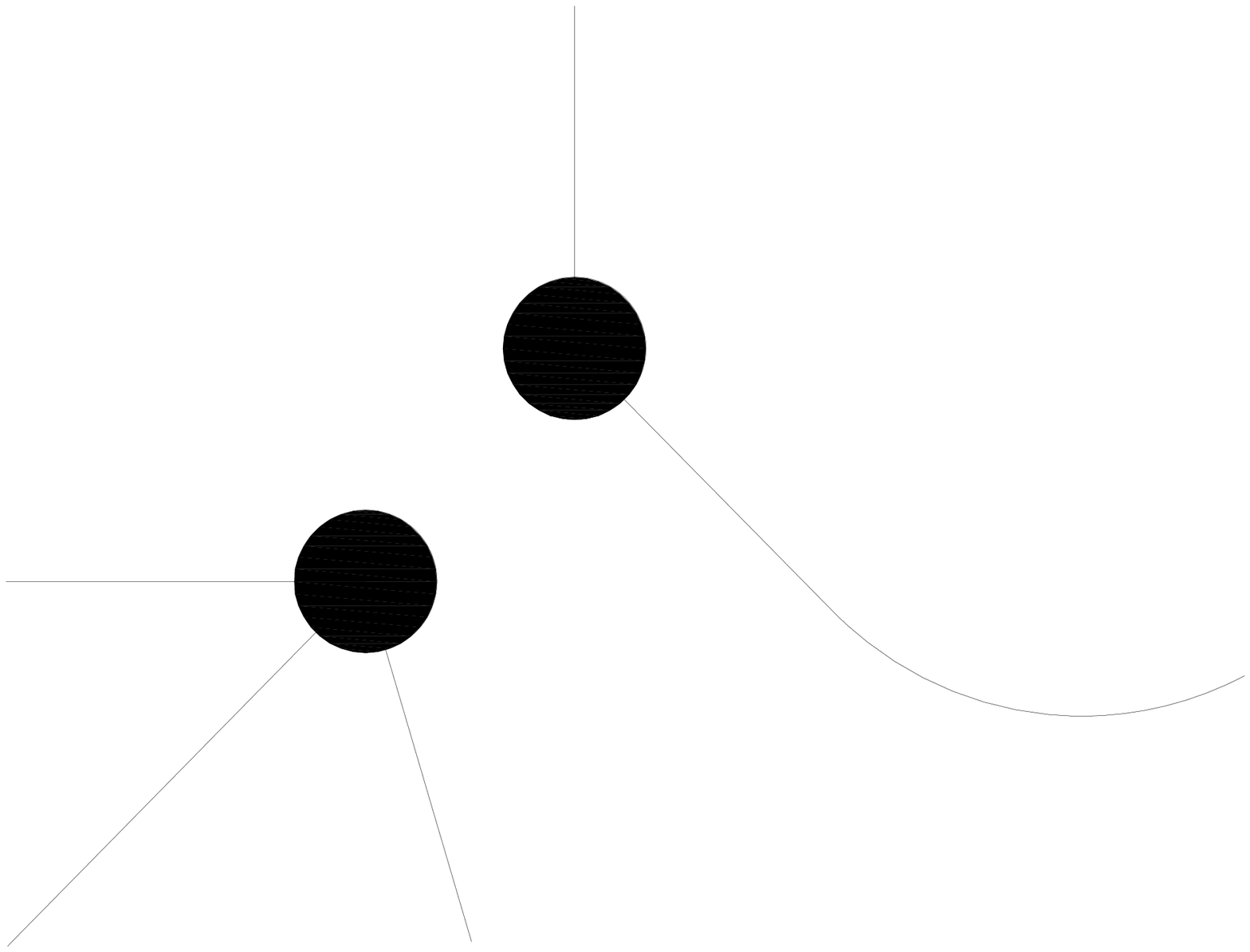}}}
\newcommand{\five}{\raisebox{-0.25\height}{\includegraphics[width=1.0cm]{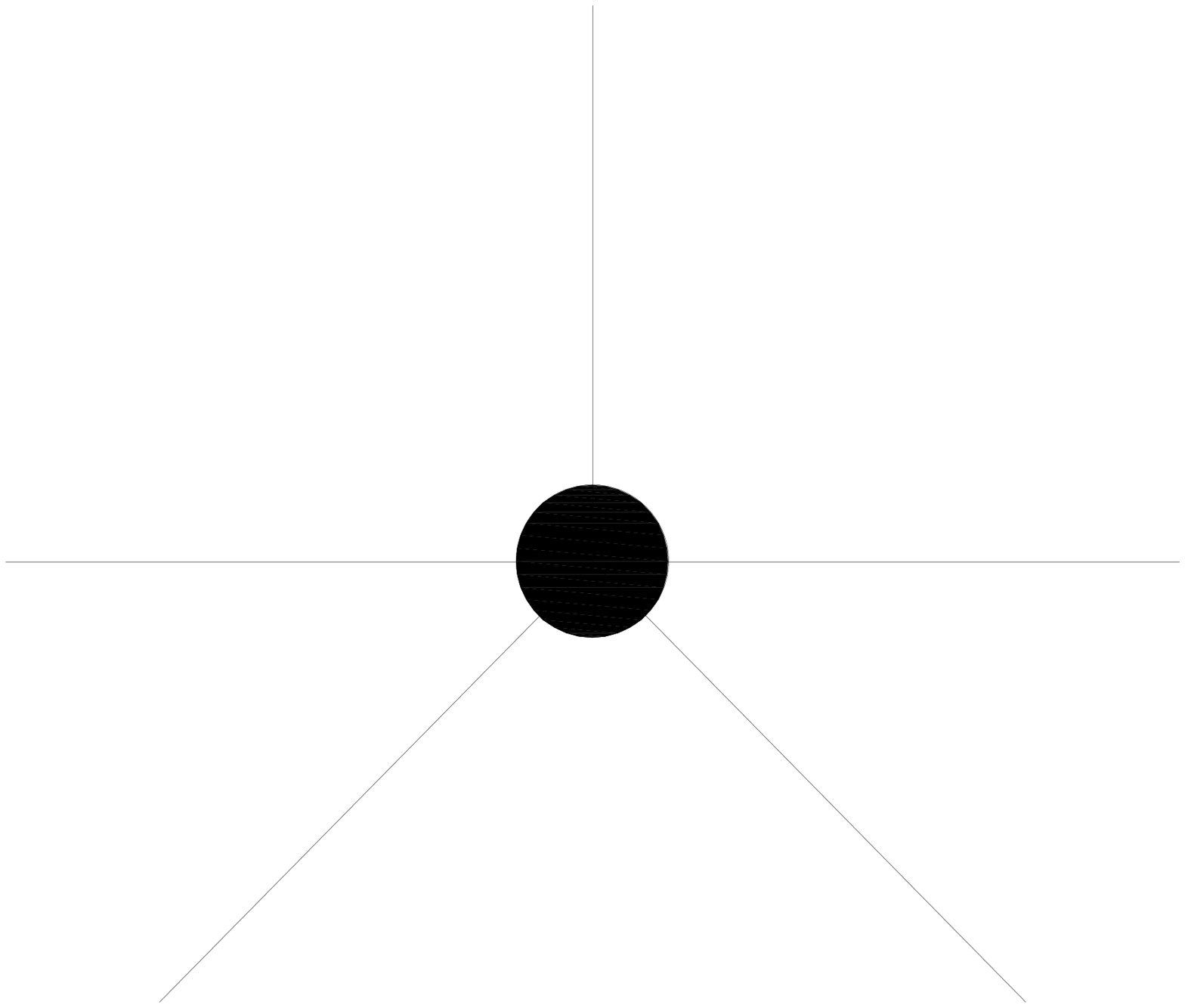}}}
\newcommand{\Gte}{\raisebox{-0.25\height}{\includegraphics[width=1.0cm]{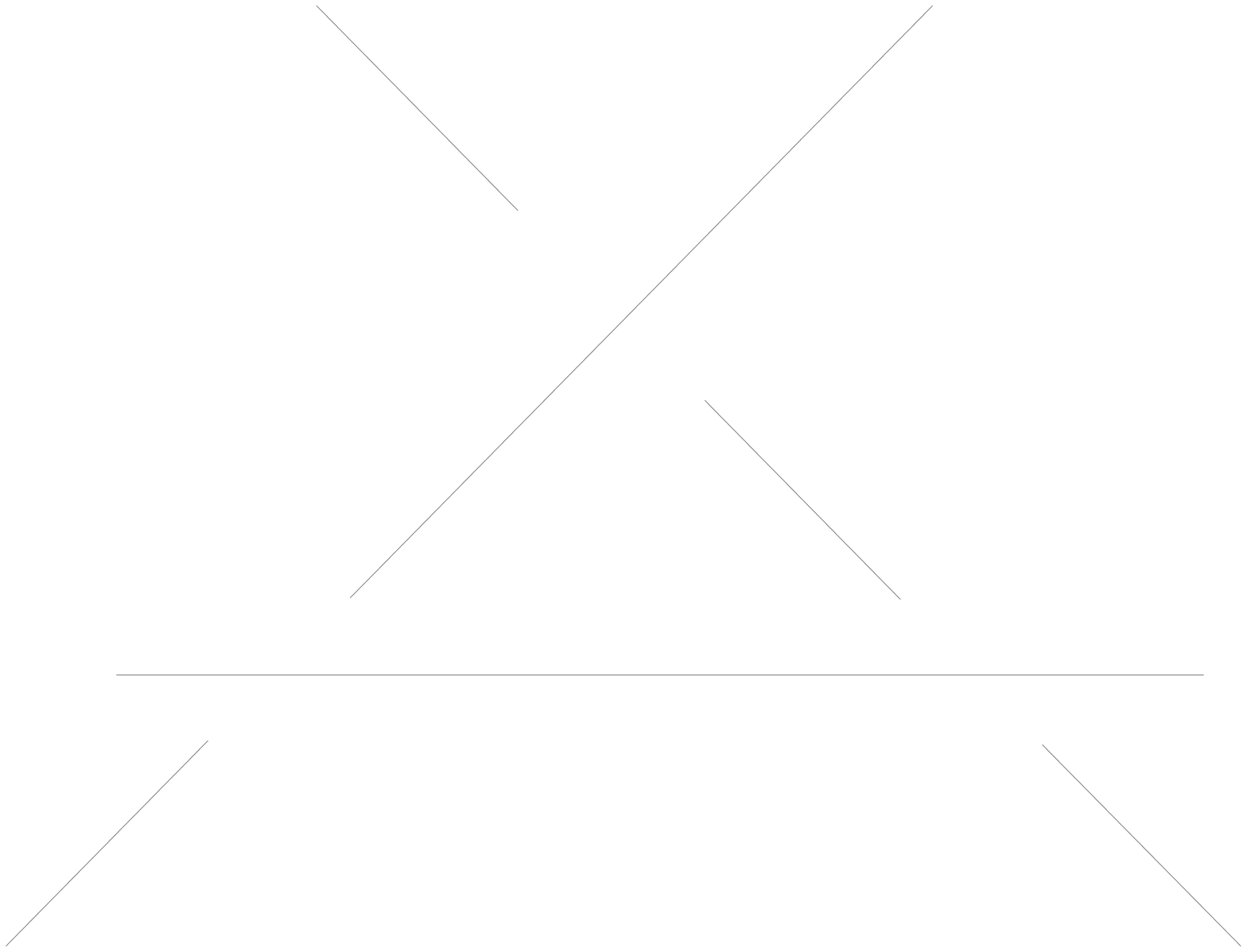}}}
\newcommand{\Gts}{\raisebox{-0.25\height}{\includegraphics[width=1.0cm]{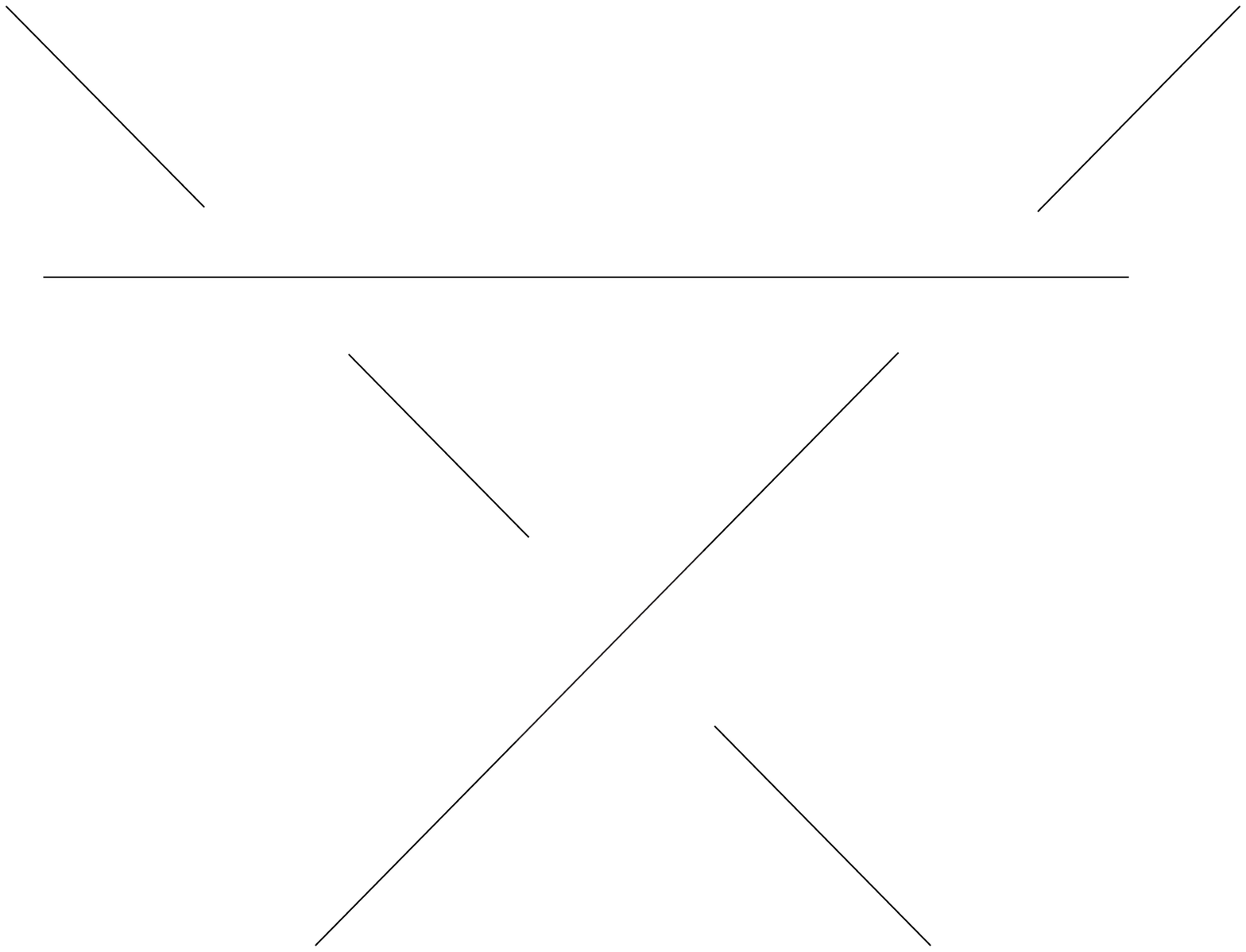}}}
\newcommand{\graphone}{\raisebox{-0.25\height}{\includegraphics[width=1.0cm]{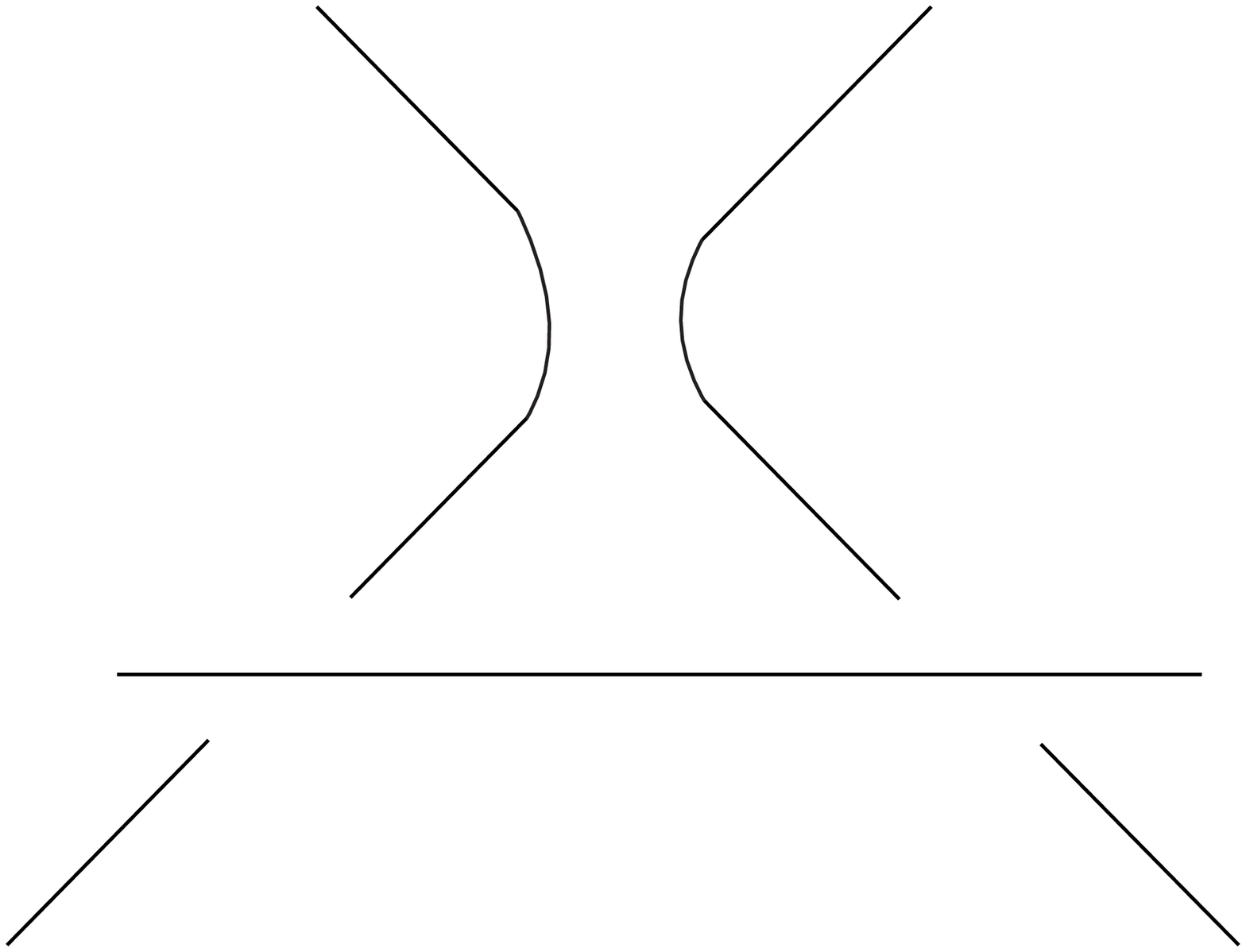}}}
\newcommand{\graphtwo}{\raisebox{-0.25\height}{\includegraphics[width=1.0cm]{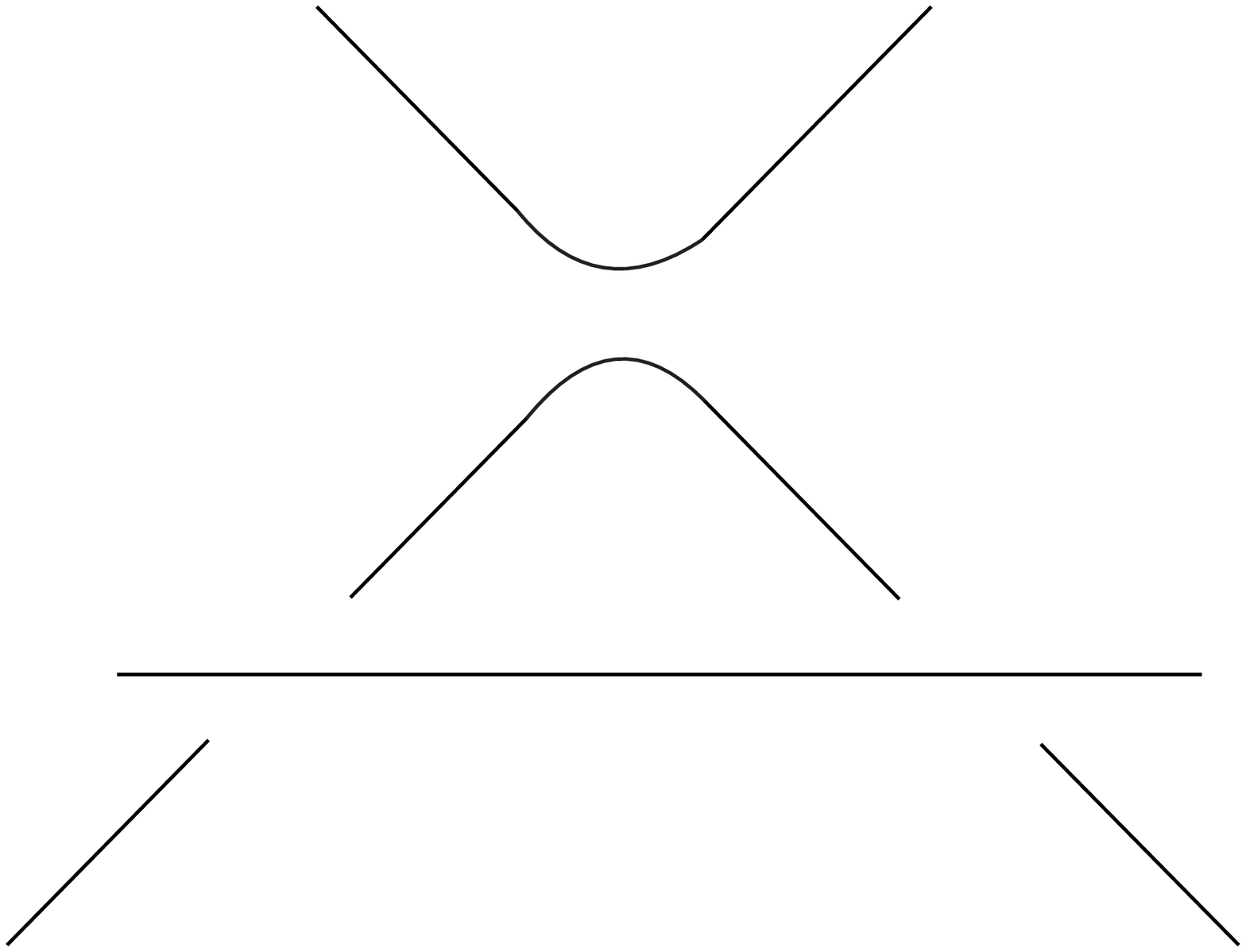}}}
\newcommand{\graphthree}{\raisebox{-0.25\height}{\includegraphics[width=1.0cm]{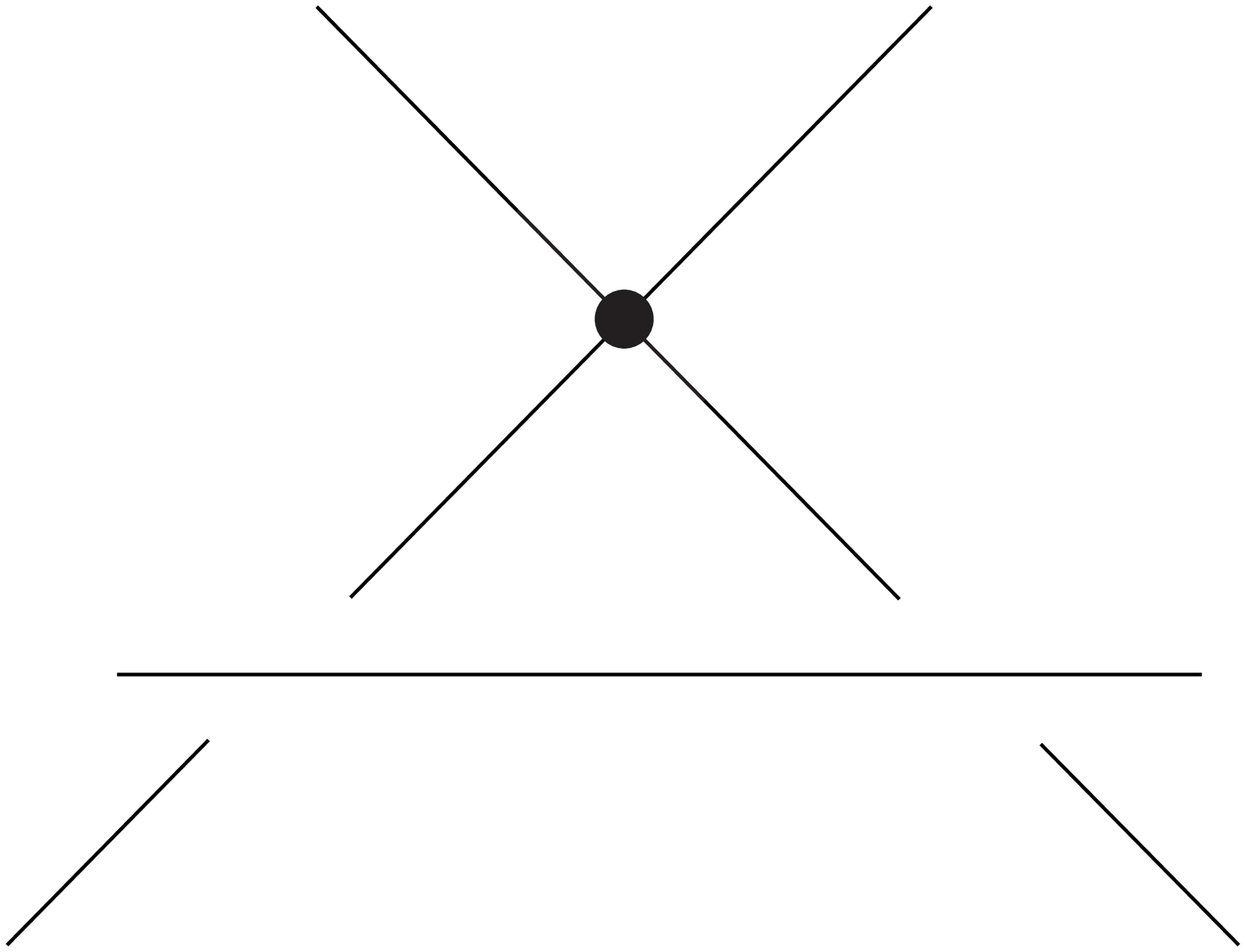}}}
\newcommand{\graphfour}{\raisebox{-0.25\height}{\includegraphics[width=1.0cm]{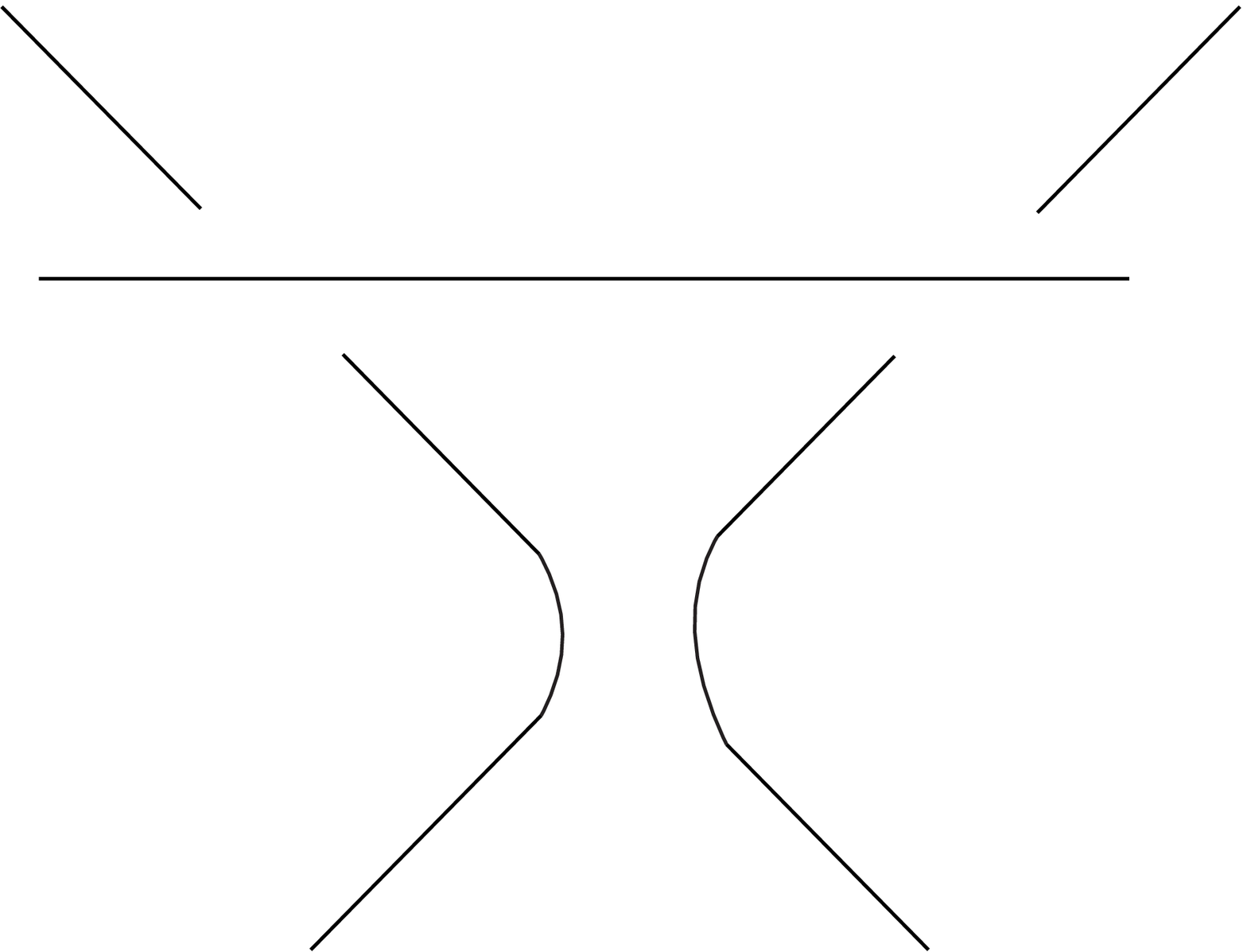}}}
\newcommand{\graphfive}{\raisebox{-0.25\height}{\includegraphics[width=1.0cm]{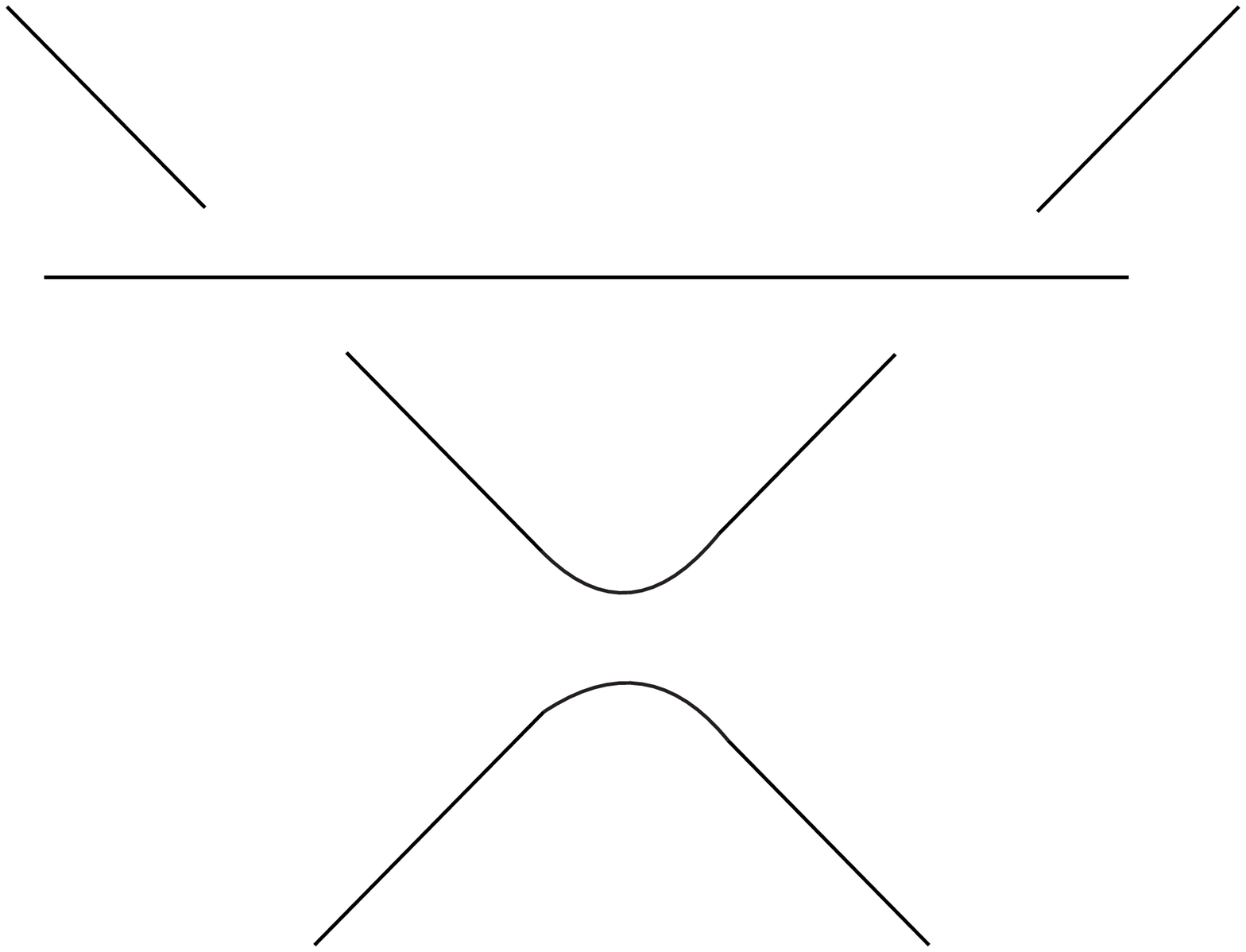}}}
\newcommand{\graphsix}{\raisebox{-0.25\height}{\includegraphics[width=1.0cm]{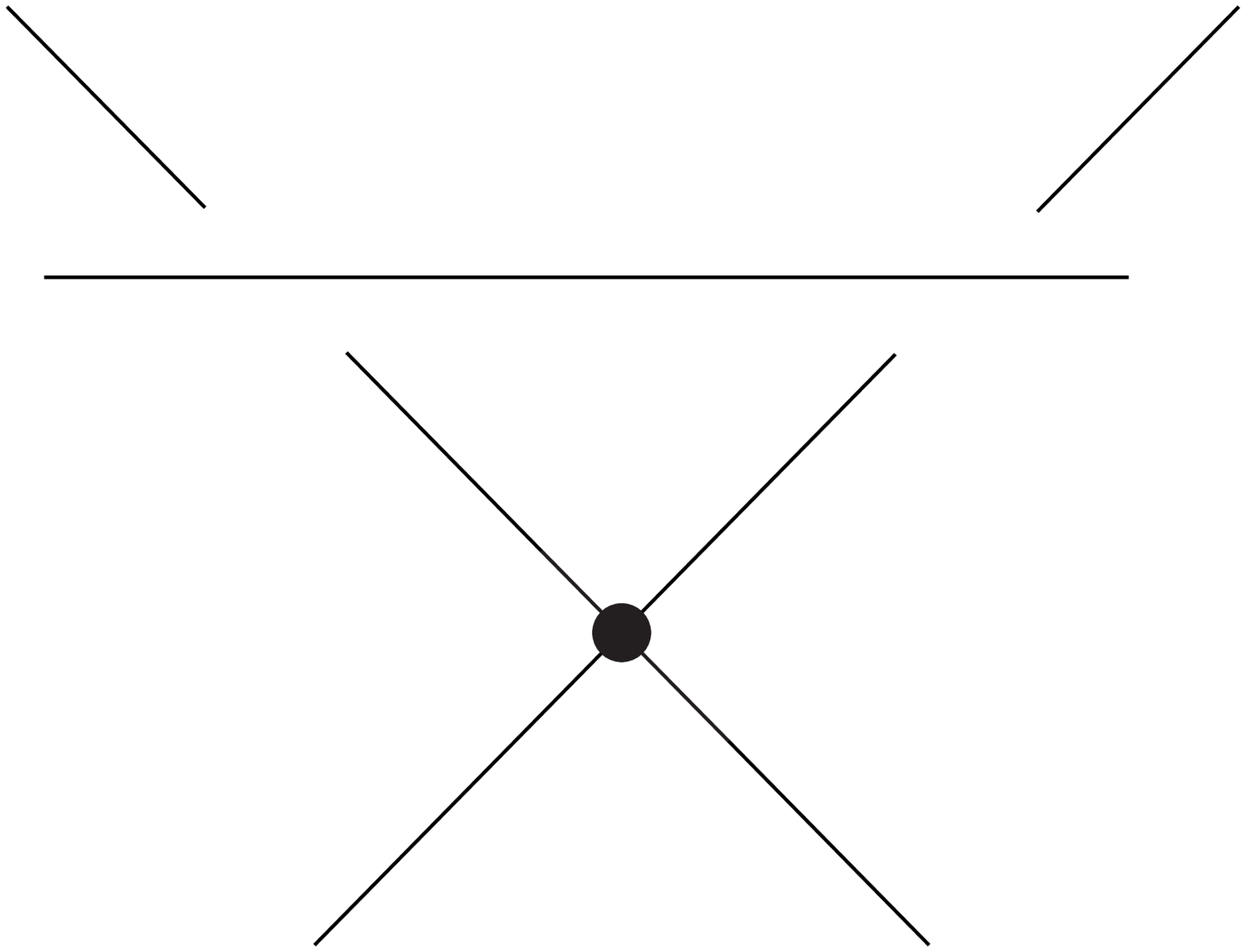}}}
\newcommand{\vertex}{\raisebox{-0.25\height}{\includegraphics[width=0.1cm]{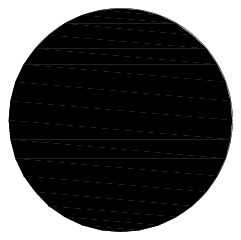}}}
\newcommand{\loopmaker}{\raisebox{-0.25\height}{\includegraphics[width=0.4cm]{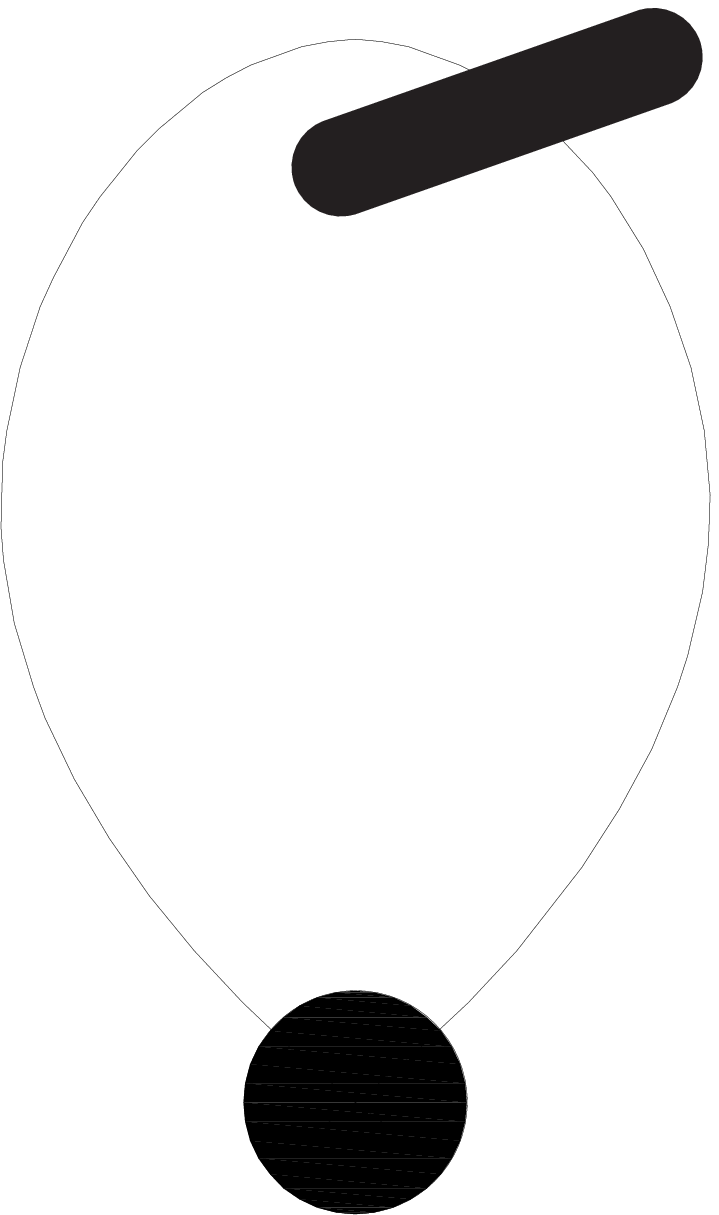}}}
\newcommand{\selfloop}{\raisebox{-0.25\height}{\includegraphics[width=0.4cm]{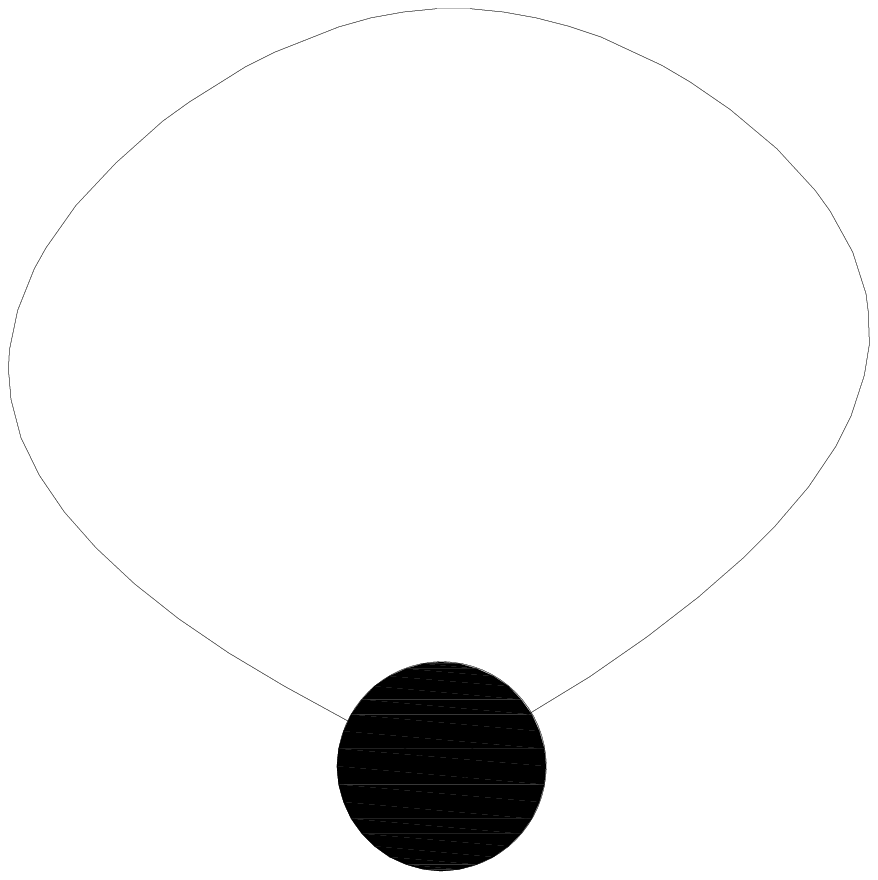}}}
\newcommand{\crossarc}{\raisebox{-0.25\height}{\includegraphics[width=0.8cm]{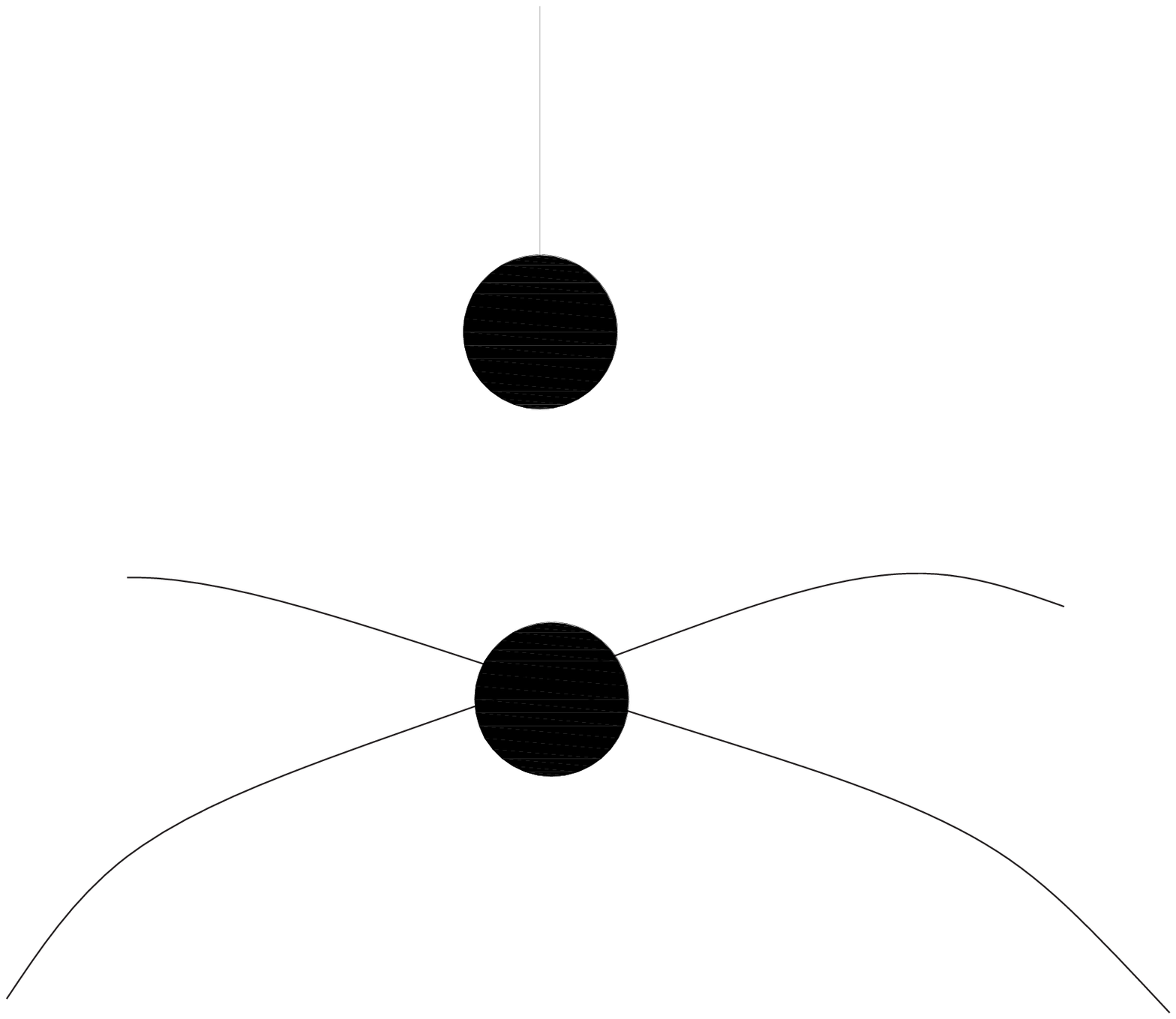}}}
\newcommand{\ReidMove}{\raisebox{-0.25\height}{\includegraphics[width=0.8cm]{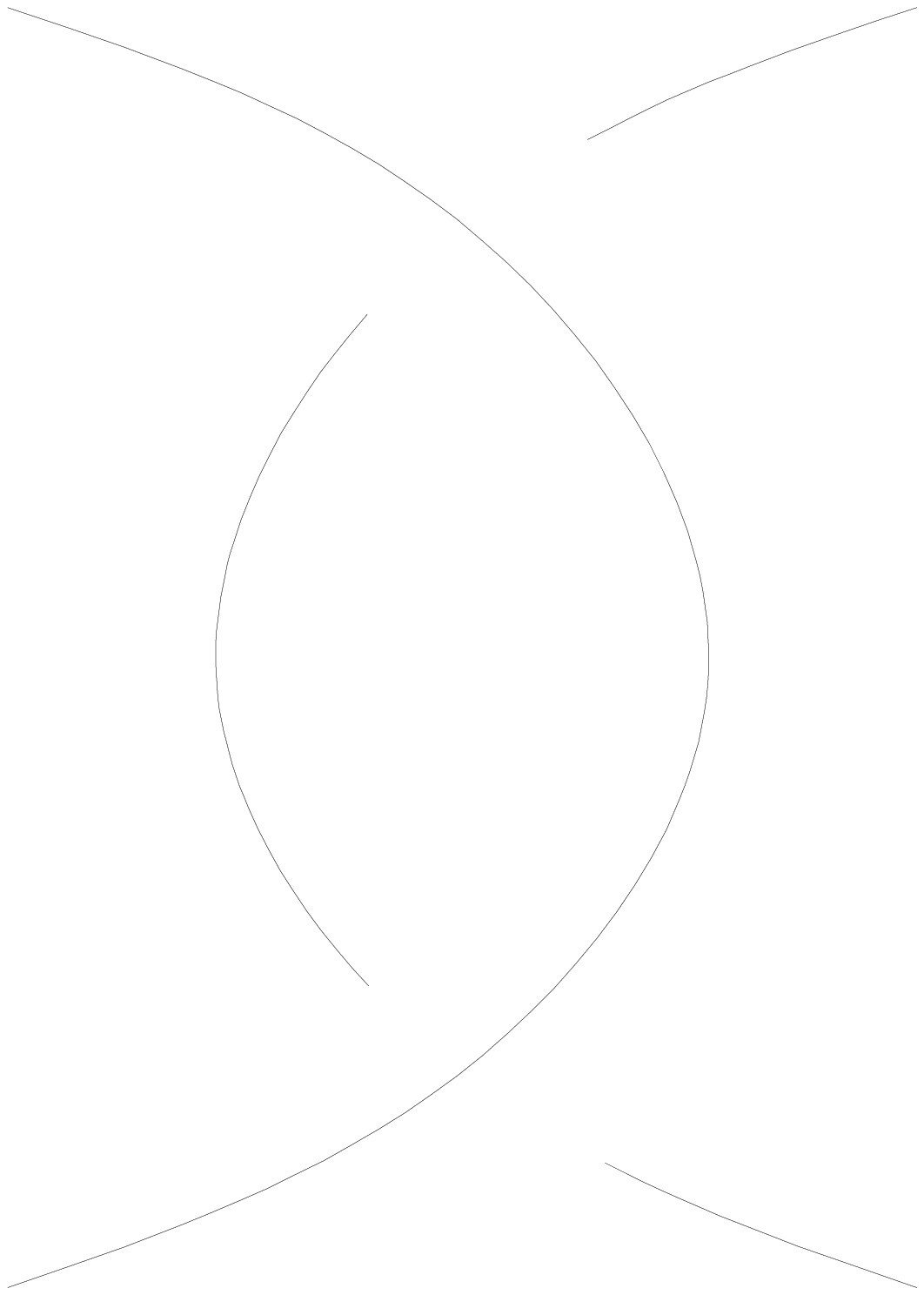}}}
\newcommand{\onee}{\raisebox{-0.25\height}{\includegraphics[width=1.2cm]{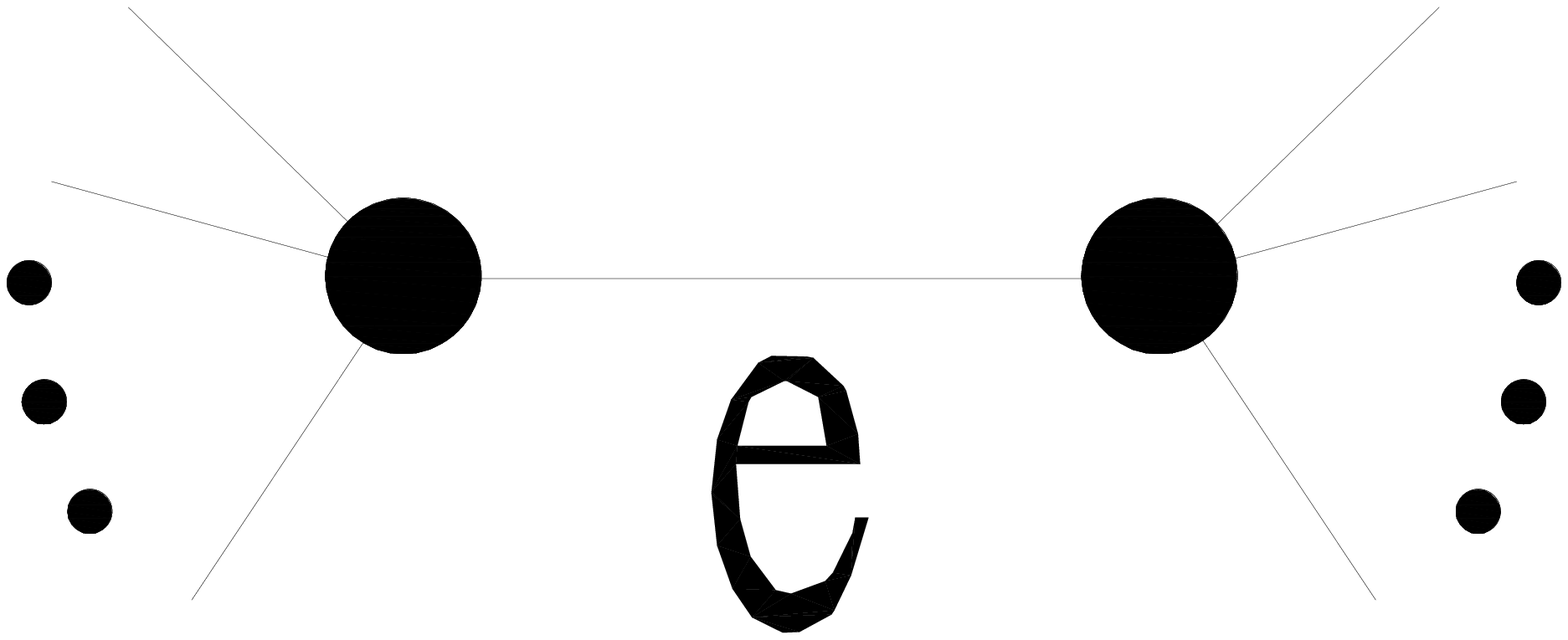}}}
\newcommand{\deleteedge}{\raisebox{-0.25\height}{\includegraphics[width=1.2cm]{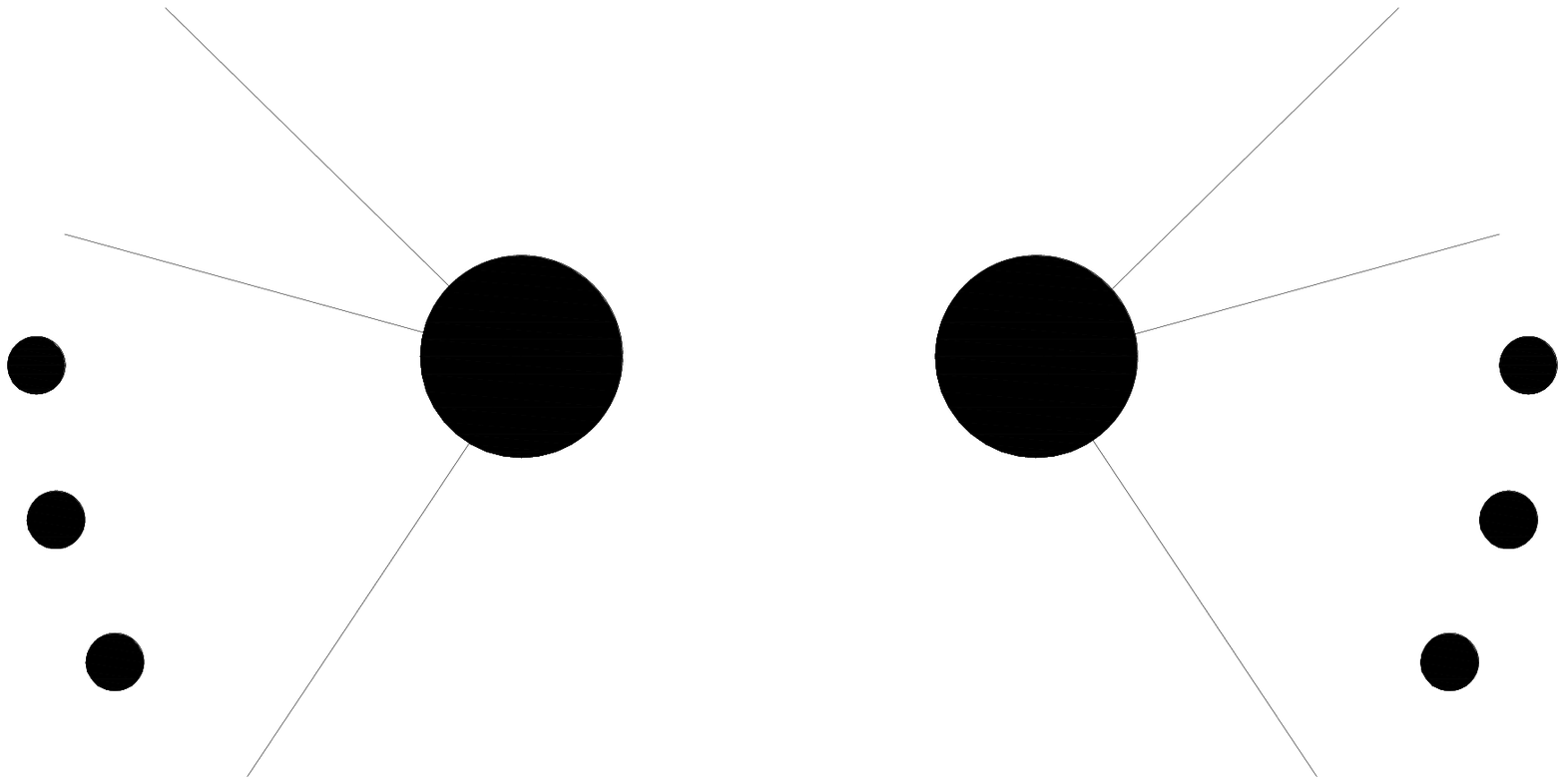}}}
\newcommand{\markingedge}{\raisebox{-0.25\height}{\includegraphics[width=1.2cm]{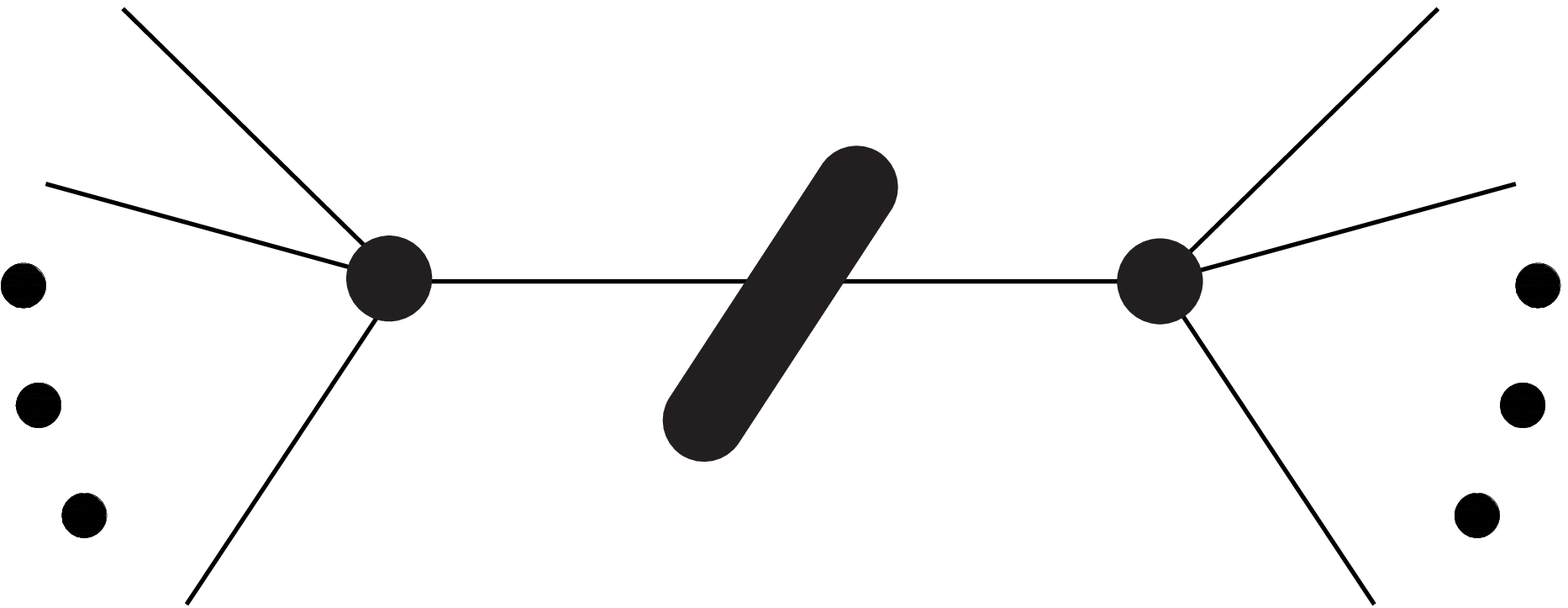}}}
\newcommand{\upthree}{\raisebox{-0.25\height}{\includegraphics[width=0.8cm]{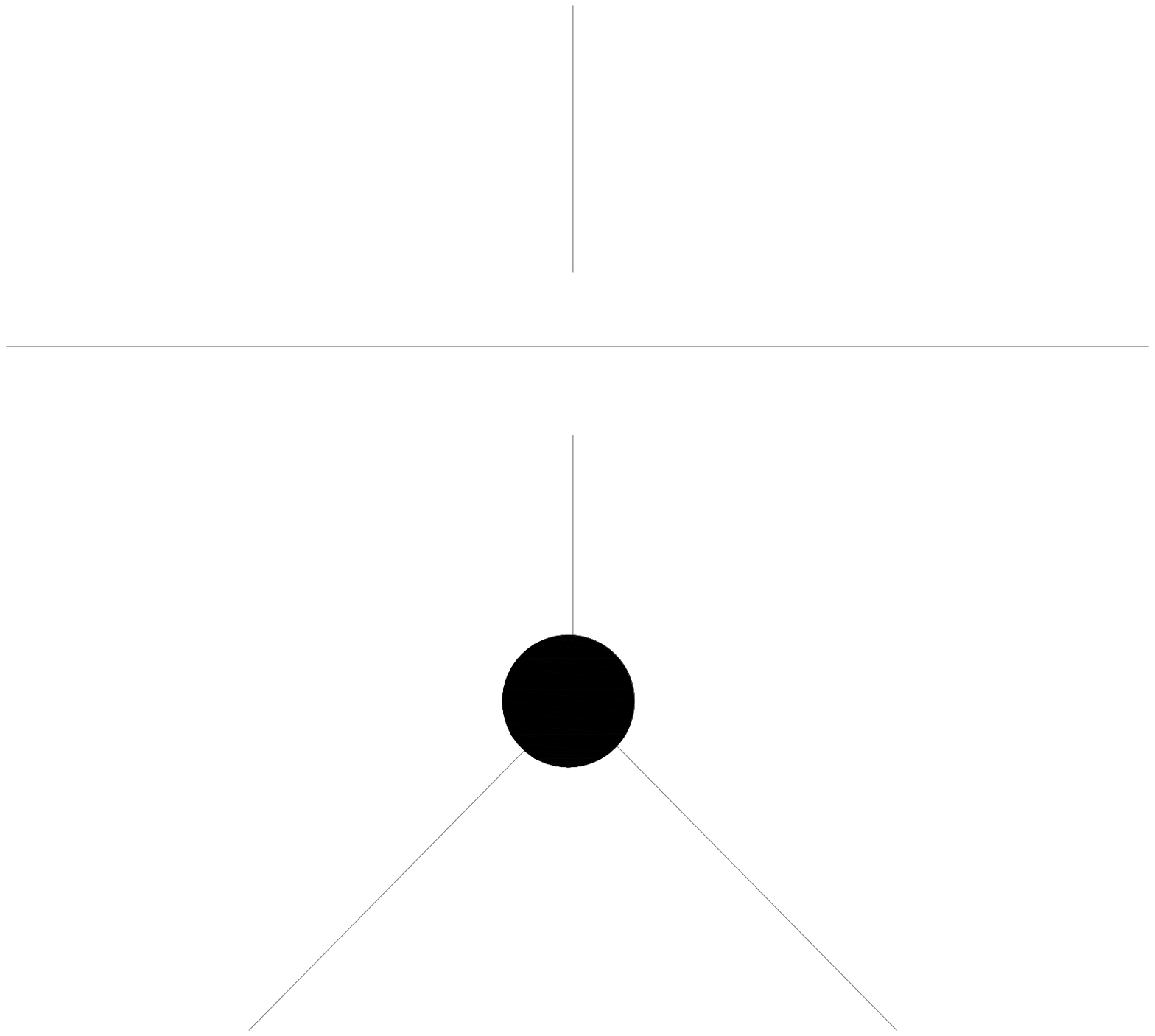}}}
\newcommand{\makrboundary}{\raisebox{-0.25\height}{\includegraphics[width=0.8cm]{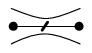}}}
\newcommand{\ndv}{\raisebox{-0.25\height}{\includegraphics[width=1.0cm]{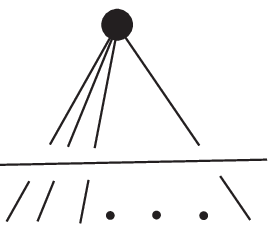}}}
\newcommand{\ndvae}{\raisebox{-0.25\height}{\includegraphics[width=1.1cm]{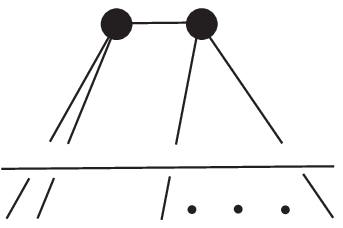}}}
\newcommand{\ndvde}{\raisebox{-0.25\height}{\includegraphics[width=1.1cm]{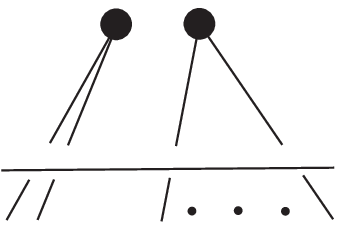}}}
\newcommand{\ndvdes}{\raisebox{-0.25\height}{\includegraphics[width=1.0cm]{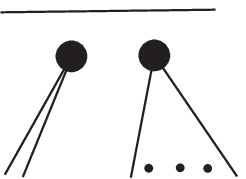}}}
\newcommand{\ndvaes}{\raisebox{-0.25\height}{\includegraphics[width=1.0cm]{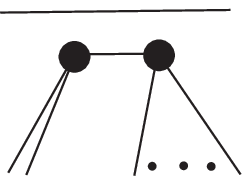}}}
\newcommand{\ndvs}{\raisebox{-0.25\height}{\includegraphics[width=0.8cm]{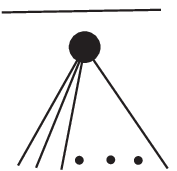}}}
\newcommand{\positivecurl}{\raisebox{-0.25\height}{\includegraphics[width=0.7cm]{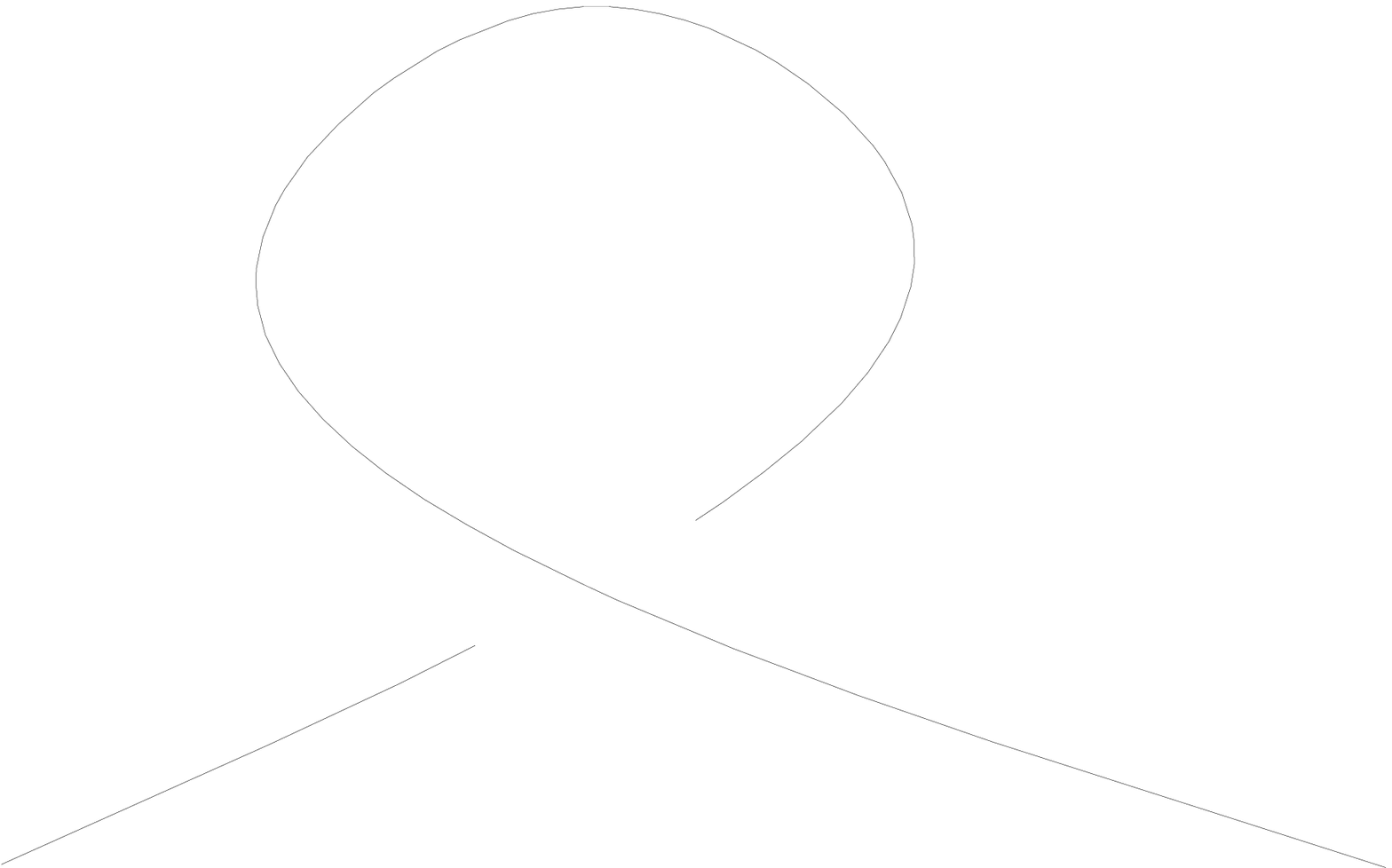}}}
\newcommand{\negativecurl}{\raisebox{-0.25\height}{\includegraphics[width=0.7cm]{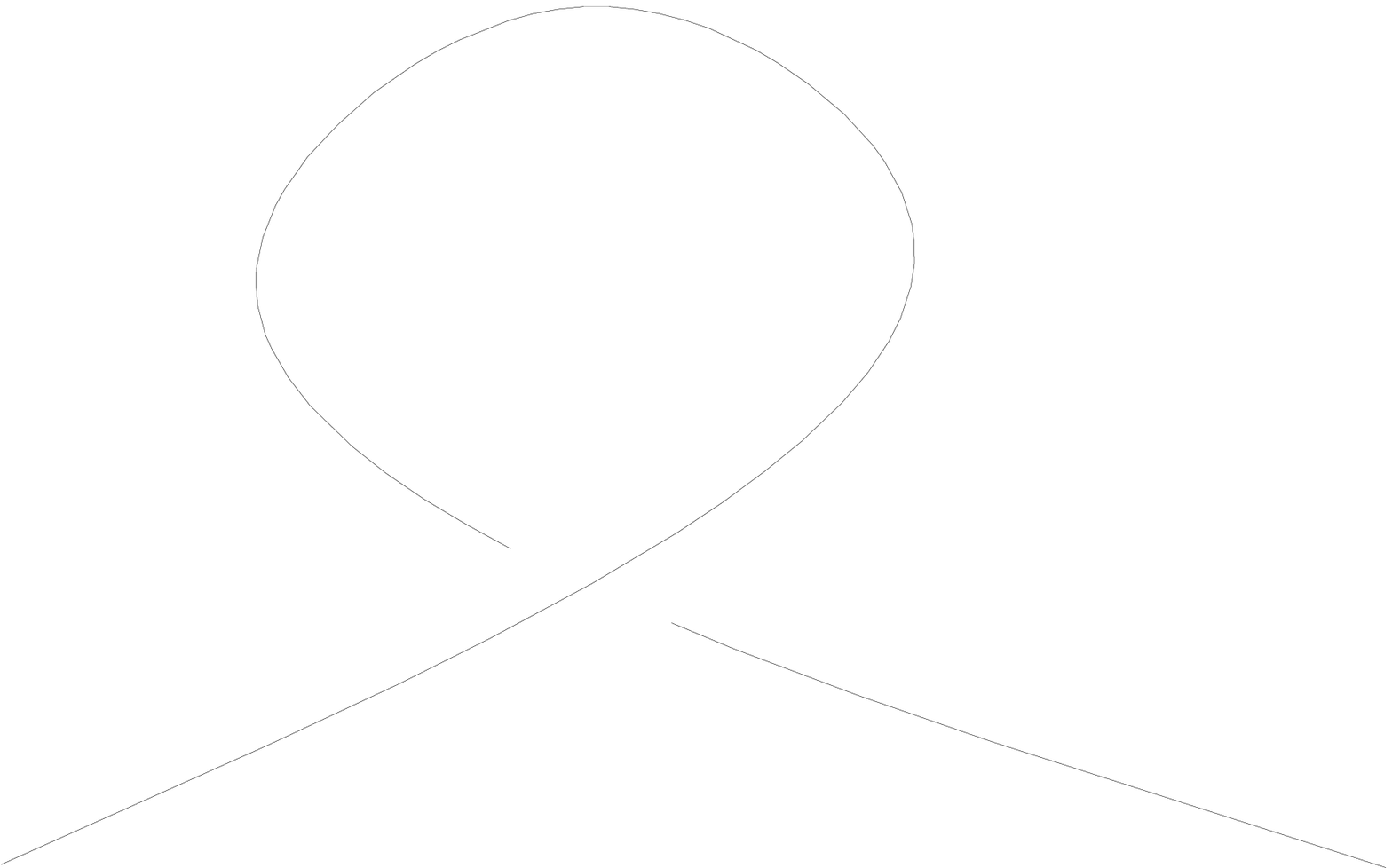}}}
\newcommand{\virtualhopflink}{\raisebox{-0.25\height}{\includegraphics[width=1.0cm]{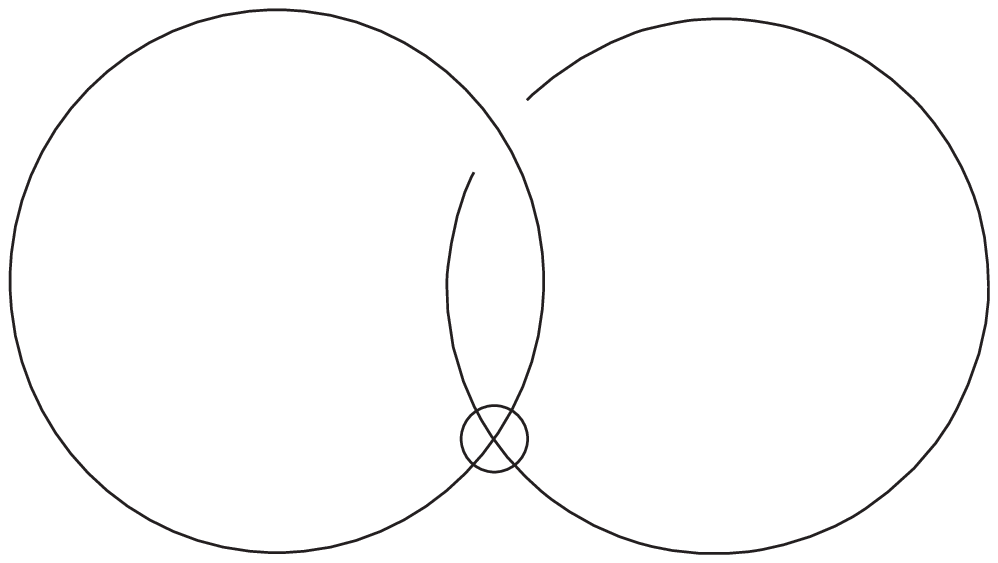}}}
\newcommand{\virtualtrefoil}{\raisebox{-0.25\height}{\includegraphics[width=1.0cm]{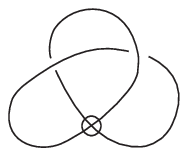}}}
\newcommand{\cablecrossing}{\raisebox{-0.25\height}{\includegraphics[width=1.0cm]{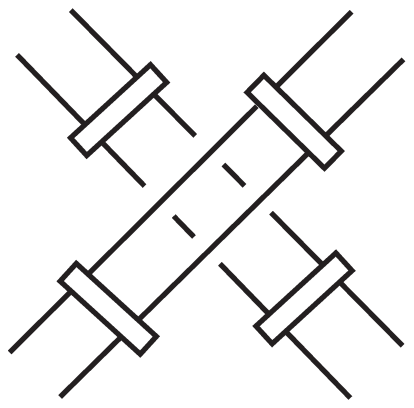}}}
\newcommand{\cableA}{\raisebox{-0.25\height}{\includegraphics[width=1.0cm]{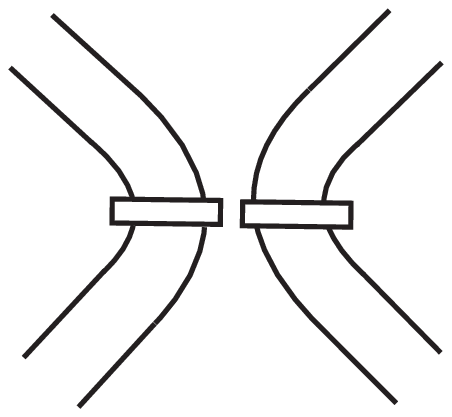}}}
\newcommand{\cableB}{\raisebox{-0.25\height}{\includegraphics[width=1.0cm]{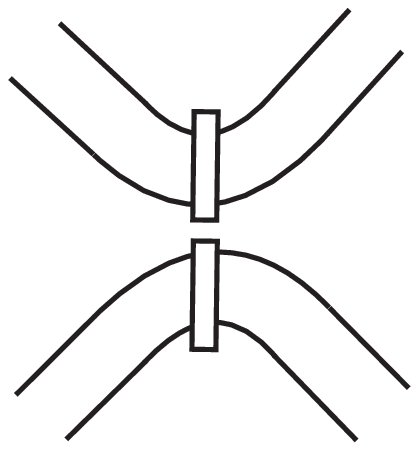}}}
\newcommand{\cablev}{\raisebox{-0.25\height}{\includegraphics[width=1.0cm]{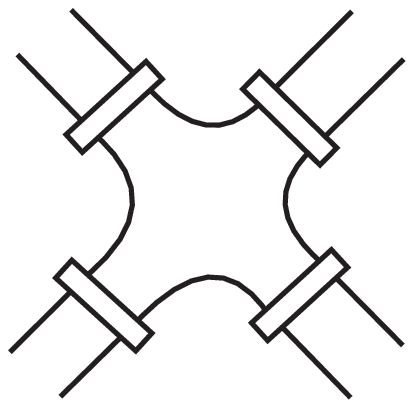}}}
\newcommand{\virtualizedtrefoil}{\raisebox{-0.25\height}{\includegraphics[width=1.0cm]{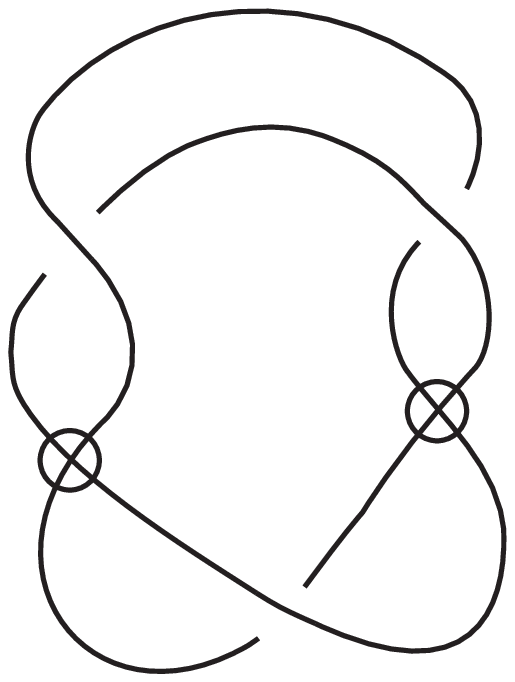}}}
\begin{document}
\title
{The generalized Yamada polynomials of virtual spatial graphs}

\author{Qingying Deng}
\address{School of Mathematical Sciences,Xiamen University, Xiamen, Fujian 361005,
P. R. China. E-mail: qydeng.xmu@gmail.com (Q. Y. Deng)}

\author{Xian'an Jin}
\address{School of Mathematical Sciences,Xiamen University, Xiamen, Fujian 361005,
P. R. China. E-mail: xajin@xmu.edu.cn (X. Jin)}

\author{Louis H. Kauffman}
\address{Department of Mathematics, Statistics and Computer Science \\ 851 South Morgan Street   \\ University of Illinois at Chicago\\
Chicago, Illinois 60607-7045\\ and\\ Department of Mechanics and Mathematics\\ Novosibirsk State University\\Novosibirsk, Russia\\E-mail: kauffman@uic.edu (L. Kauffman)}

\begin{abstract}
Classical knot theory can be generalized to virtual knot theory and spatial graph theory. In 2007, Fleming and Mellor combined virtual knot theory and spatial graph theory to form, combinatorially, virtual spatial graph theory. In this paper, we introduce a topological definition of virtual spatial graphs that is similar to the topological definition of a virtual link. Our main goal is to generalize the classical Yamada polynomial that is defined for a spatial graph. We define a generalized Yamada polynomial for a virtual spatial graph and prove that it can be normalized to a rigid vertex isotopic invariant and to a pliable vertex isotopic invariant for graphs with maximum degree at most 3. We consider the connection and difference between the generalized Yamada polynomial and the Dubrovnik polynomial of a classical link. The generalized Yamada polynomial specializes to a version of the Dubrovnik polynomial for virtual links such that it can be used to detect the non-classicality of some virtual links. We obtain a specialization for the generalized Yamada polynomial (via the Jones-Wenzl projector $P_2$ acting on a virtual spatial graph diagram), that can be used to write a program for calculating it based on Mathematica Code.
\end{abstract}

\keywords{Virtual spatial graph; generalized Yamada polynomial; Dubrovnik polynomial; Jones-Wenzl projector}

\subjclass[2000]{57M27 }

\maketitle

\section{Introduction}

\noindent

A knot is a closed 1-manifold embedded in $\mathbb{R}^3$.
A link is a disjoint union of finitely many closed 1-manifolds embedded in $\mathbb{R}^3$.
Knot theory is the study of isotopy classes of circles embedded in $\mathbb{R}^3$.
Any link can be described by its \textit{diagram}, the result of projecting the embedding in $\mathbb{R}^3$ to a plane, retaining information about over-strand and under-strand on each crossing.
The notion of studying topological embeddings of spatial graphs has been in the literature for a long time. For example, see Section 5 in \cite{RHFox}.

In 1989, Kauffman \cite{Kau89} and Yamada \cite{Yam} independently introduced the notion of a diagrammatic \textit{spatial graph} (that is, a graph in $\mathbb{R}^3$), which extended  diagrammatic knot theory.
The theory of spatial graphs studies two types of isotopy classes of graphs embedded in $\mathbb{R}^3$: graphs with rigid vertices and graphs with nonrigid (topological) vertices.
Later, Kauffman \cite{VKT} introduced the notion of a \textit{virtual knot}, which is an extension of classical knot theory.
Moreover, Kamada \cite{Kam} introduced the notion of an abstract link diagram, and it is proved that there is a bijection from the equivalence classes of virtual link diagrams to an appropriate equivalence relation on abstract link diagrams.
Fleming and Mellor introduced the notion of a \textit{virtual spatial graph} (see Section \ref{S:virtualsg}) by using a combinatorial method in \cite{Fleming07}, which combined a spatial graph and a virtual knot.
They also defined the Yamada polynomial of virtual spatial graphs based on the idea of the original Yamada polynomial for spatial graphs.
But it is very weak in the sense that Miyazawa \cite{Miyazawa} defined a new type Yamada polynomial for virtual spatial graphs via adding a new parameter $\varepsilon$ with value $\pm1$. Miyazawa's polynomial will correspond to our generalized Yamada polynomial $R(D;\alpha,x)$ with $\alpha=A$ and $x=\pm1$.

Furthermore, the theory of virtual spatial graphs studies two kinds of isotopy classes of graphs embedded in orientable thickened surface $S_g\times I$, where $S_g$ is a compact orientable surface with genus $g$ and $I$ is a unit interval of $[0,1]$.
In \cite{KauMishra}, Kauffman and Mishra proposed the concept of \textit{a virtual graph} which is equivalent to a 4-regular virtual spatial graph as virtual rigid vertex graph (see Section \ref{S:virtualsg}).
In \cite{KauMishra}, degree 4 vertices are considered as special crossings, i.e., in a virtual graph, there are three types of transverse double points.
However, Kauffman and Manturov in \cite{KauMan} use the term ``virtual graph'' to mean a 4-regular graph with virtual crossings only.
We therefore note that from our point of view an acceptable usage is ``virtual graph'' for arbitrary cyclic graph (See Definition \ref{D:cyclic}) with virtual crossings immersed in the plane. The term ``spatial'' is reserved for diagrams that have classical crossings.

Very recently, Snyder and Miller \cite{SnyderMiller} defined the notion of a virtual graph (equivalently, cyclic graph) in a topological way.
And they define the $S$-polynomial for virtual graphs which is a little different from  (but similar with) our generalized Yamada polynomial for cyclic graphs.
They also defined the Yamada polynomial for virtual spatial graphs which is based on the $S$-polynomial.

For related studies of spatial graphs, please refer to \cite{Hanaki,Harvey,KKKP,LLLV,Murakami,Yam}.
For related studies of virtual spatial graphs, please refer to
\cite{Fleming07,fm3,KauMishra,Miyazawa,SnyderMiller}.

Understanding the equivalence classes of the virtual spatial graphs is an important issue.
Therefore, people try to generalize the related research in knot theory, virtual knot theory, and spatial graph theory to virtual spatial graphs.
Fleming and Mellor's article \cite{Fleming07} generalized some concepts in classical and virtual knot theory such as fundamental groups and quandles.
And they proved that the fundamental group and quandle are both pliable vertex (topological) isotopy invariants.

The paper is organized as follows:
In Section \ref{S:virtualsg}, we introduce a topological definition of the virtual spatial graph which is similar to the topological definition of a virtual link.
We define the generalized Yamada polynomial of virtual spatial graphs (as virtual rigid vertex graph and virtual pliable vertex graph) via their diagrams in Section \ref{S:GYP}.
We shall consider the connection and difference between it and the Dubrovnik polynomial of classical links in Section \ref{S:Dubrovnik}.
As an application of the generalized Yamada polynomial, by the Jones-Wenzl projector $P_2$ acting on a virtual spatial graph diagram,
we get a special parameter for the generalized Yamada polynomial, which can be used to write a program for calculating the generalized Yamada polynomial based on \textit{Mathematica} Code in Section \ref{S:Application}.

\section{Virtual spatial graphs}\label{S:virtualsg}

\noindent

Let us start with the definitions and introduce the notation.

\begin{defn}
Let $G=(V,E)$ be a graph embedded in $\mathbb{R}^3$. We say $G$ is a \textit{spatial graph}.
\end{defn}

\begin{defn}\cite{Yam,Kau89,KV}\label{kvd}
A \textit{spatial graph diagram} is a planar representation of a spatial graph and it is analogous to the link diagram of a link.
In the spatial graph diagram, vertices of the graph are represented by planar nodes as in Figure \ref{F:classicalmoves}, and all crossings are in generic form as in a classical link diagram.
\end{defn}

A $\textit{virtual link diagram}$ consists in a finitely many closed 1-manifolds generically immersed in $\mathbb{R}^2$ such that each double point is labeled to be (1) a classical crossing which is indicated as usual in classical knot theory or (2) a virtual crossing which is indicated by a small circle around the double point.
The moves (I)-(III) and (I*)-(IV*) for virtual link diagrams illustrated in Figures \ref{F:classicalmoves} and \ref{F:virtualmoves} are called $\textit{generalized Reidemeister moves}$.
The $\textit{Detour move}$ illustrated in Figure \ref{dm} is equivalent to a combination of moves (I*)-(IV*) in Figure \ref{F:virtualmoves}.
Two virtual link diagrams are said to be \textit{equivalent} if they are related by a finite sequence of generalized Reidemeister moves.
We call the equivalence class of a virtual link diagram a $\textit{virtual link}$.
We refer the reader to \cite{VKT,Kau3} for further details about virtual links.

\begin{defn}
A \textit{virtual spatial graph diagram} is a graph generically immersed in $\mathbb{R}^2$, that is, there are two kinds of double points: virtual crossings and classical crossings.
A virtual spatial graph diagram is analogous to the spatial graph diagram of a spatial graph, with the addition of virtual crossings.
\end{defn}

\begin{figure}[!htbp]
\centering
\includegraphics[width=10cm]{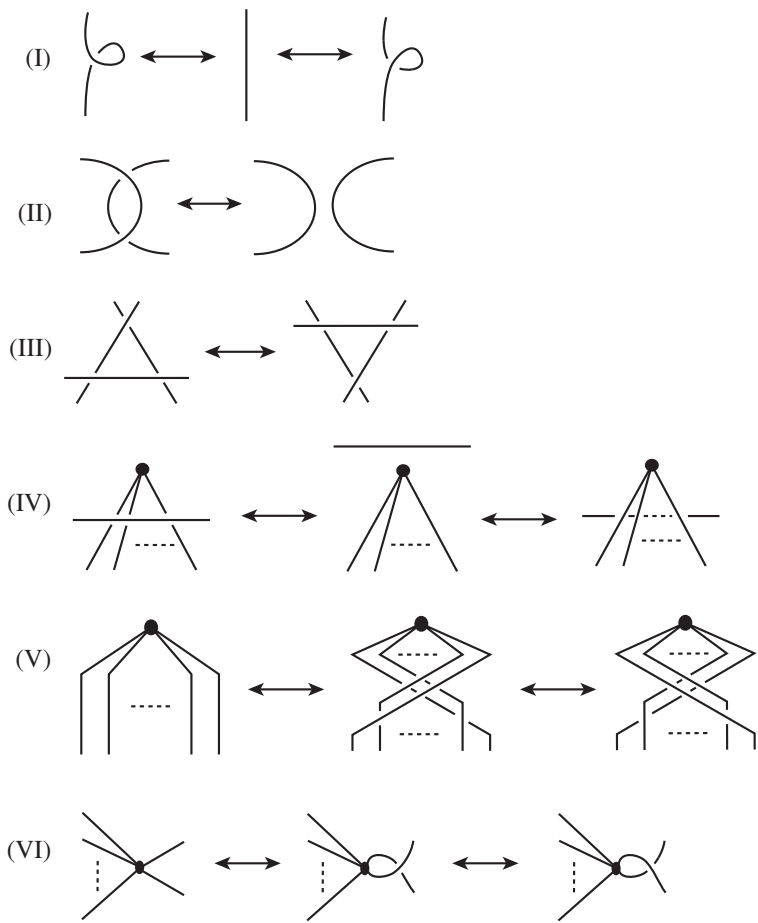}
\caption{Reidemeister\,moves for spatial graph diagrams.}
\label{F:classicalmoves}
\end{figure}

\begin{figure}
  \centering
  \includegraphics[width=8cm]{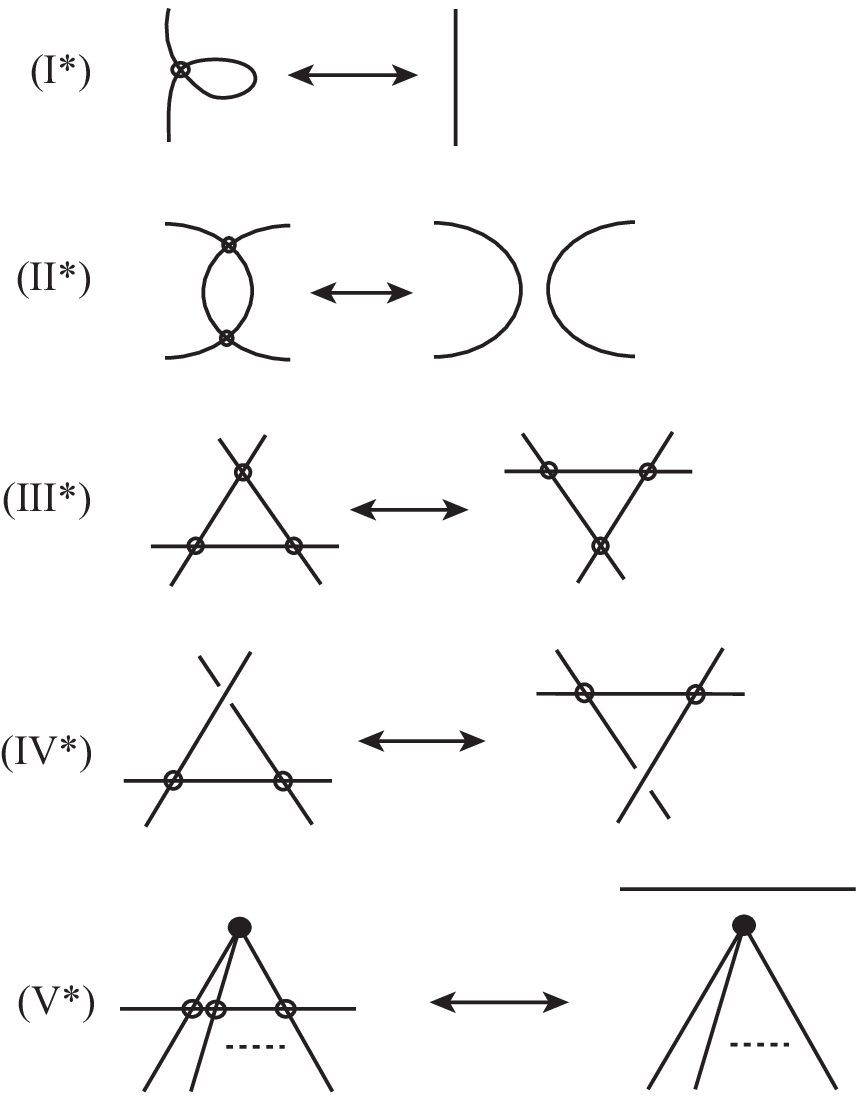}
 \caption{Reidemeister\,moves for virtual spatial graph diagrams.}
 \label{F:virtualmoves}
\end{figure}

\begin{figure}
\centering
\includegraphics[width=3.5in]{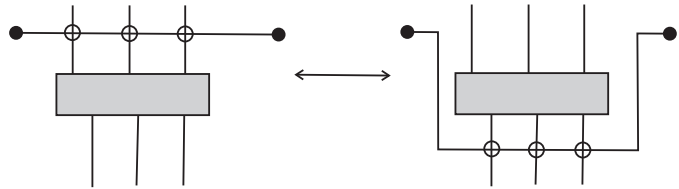}
\caption{Detour move.}
\label{dm}
\end{figure}

\begin{figure}
\centering
\includegraphics[width=9cm]{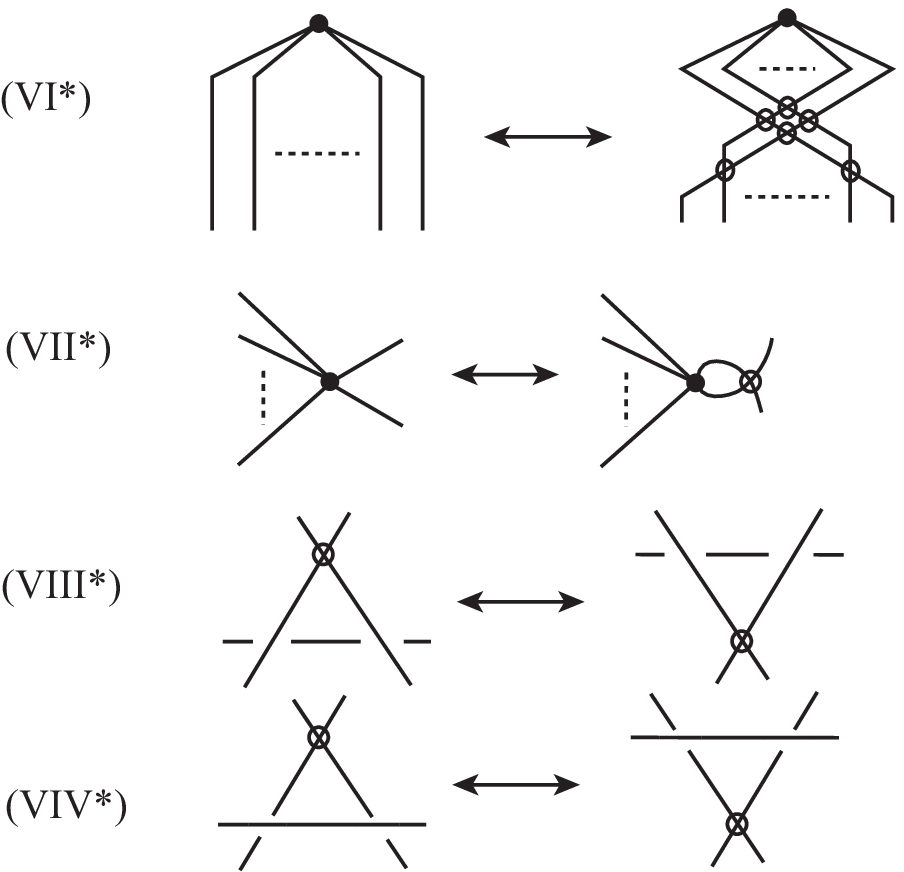}
\caption{Forbidden moves on virtual spatial graph diagrams.}
\label{F:forbiddenmoves}
\end{figure}

Figure \ref{F:classicalmoves} illustrates the Reidemeister moves for spatial graph diagrams. In \cite{Fleming07},
Fleming and Mellor added five moves with virtual crossings for virtual spatial graph diagrams as shown in Figure \ref{F:virtualmoves}.
\begin{defn}
In virtual spatial graph diagrams, there are the following four kinds of isotopy deformations.
\begin{itemize}
  \item Moves (II)-(III) and moves (I*)-(V*) generate \textit{{\bf regular deformation}}.
  \item Moves (I)-(III) and moves (I*)-(V*) generate \textit{{\bf ambient isotopic deformation}}.
  \item Moves (I)-(V) and moves (I*)-(V*) generate \textit{{\bf rigid vertex deformation}}, where the cyclic order of the half-edges around each vertex is fixed.
  \item Moves (I)-(VI) and moves (I*)-(V*) generate \textit{{\bf pliable vertex deformation}}, where the order of the half-edges around each vertex can be changed using move (VI).
\end{itemize}
\end{defn}

\begin{rem}
The four moves shown in Figure \ref{F:forbiddenmoves} are forbidden. They are not in the list of Reidemeister moves for virtual spatial graph diagrams although they are not a combination of Reidemeister moves for virtual spatial graph diagrams.
\end{rem}

Two virtual spatial graph diagrams are said to be \textit{regular, ambient, rigid vertex and pliable vertex isotopic} if they are related by a finite sequence of regular, ambient isotopic, rigid vertex and pliable vertex deformations, respectively.
We call the rigid vertex and pliable vertex isotopic equivalence class of a virtual spatial graph diagram as a \textit{virtual rigid vertex graph and virtual pliable vertex graph}, respectively.
That is, for this study, two types of virtual spatial graphs are considered: graphs with rigid vertices and graphs with nonrigid (topological) vertices.
Note that, virtual rigid vertex graph means that a vertex of the graph is regarded as a disk, but virtual pliable vertex graph means that the vertex of graph is a topological vertex such that the cyclic order of the half-edges around each vertex can be changed at will.

Let $S_g$ be a compact orientable surface with genus $g$ and $I$ be the unit interval $[0,1]$. It is also well known that a link diagram drawn on $S_g$, considered up to Reidemeister moves, is equivalent to a virtual link in the thickened surface $S_g\times I$, considered up to isotopy,
Similarly, we will give a topological definition of the virtual spatial graph.

\begin{defn}
Let $G=(V,E)$ be a graph embedded in the thickened surface $S_g\times I$.  We say $G$ is a \textit{virtual spatial graph}.
\end{defn}

We say that two such surface embeddings are \textit{stably equivalent} if one can be obtained from the other by  rigid vertex isotopy (or pliable vertex isotopy) in the thickened surface, homeomorphisms of surfaces,
and the addition or subtraction of handles not incident to images of curves in the graph.
That is, one can perform surgery on the surface along curves that do not intersect the virtual spatial graph diagram on
the surface.

Similar to virtual knot theory, we obtain the following result for the virtual spatial graph diagrams.
\begin{thm}
\label{T:stableequivalent}
Two virtual spatial graph diagrams are rigid vertex isotopic or pliable vertex isotopic if and only if their corresponding surface embeddings are stably equivalent. In each case, we take the corresponding diagrammatic relations on the surfaces, plus surface homeomorphism and stabilization.
\end{thm}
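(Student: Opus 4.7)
The plan is to adapt the abstract link diagram construction of Kamada (and the equivalent surface construction of Kuperberg) from the virtual link setting to virtual spatial graphs. Given a virtual spatial graph diagram $D$, I would build a canonical surface embedding as follows. Replace each vertex by a small disk carrying the incident half-edges, replace each classical crossing by a disk carrying the standard over/under crossing, replace each virtual crossing by a thickened annulus (equivalently, a 1-handle) along which one strand passes over the other with no intersection, and connect these pieces by thin bands along the edge strands of $D$. This yields an embedding of the underlying graph in an oriented thickened surface, and the surface becomes closed by filling the boundary circles with disks. The resulting surface embedding is well defined up to addition or subtraction of handles disjoint from the graph, i.e.\ up to stable equivalence. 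Conversely, given an embedding of $G$ in $S_g \times I$, a generic projection onto a sphere chosen inside the surface (obtained, e.g., by cutting along a suitable system of curves and projecting each piece) produces a virtual spatial graph diagram in which true crossings become classical crossings and artifacts of the projection become virtual crossings.

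The bulk of the argument is then a move-by-move verification that the generalized Reidemeister moves exactly generate stable equivalence. For moves (II) and (III) and for the virtual Reidemeister moves (I$^*$)--(III$^*$) and the mixed move (IV$^*$) (equivalently, the Detour move of Figure \ref{dm}), this reduces to the virtual link case: moves (II), (III) correspond to isotopy inside a classical-crossing disk, while the starred moves correspond to isotopy through the handle attached at a virtual crossing or to sliding arcs through such a handle. Move (I) corresponds to introducing or cancelling a curl inside a disk on the surface. For the vertex moves (IV) and (V), the incident half-edges sit on the boundary circle of the vertex disk in a fixed cyclic order, and (IV), (V) are exactly the isotopies that push a classical crossing or an over-strand across such a disk; similarly (V$^*$) is the analogous push across a virtual crossing and therefore corresponds to isotopy after stabilization. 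In the rigid vertex theory the vertex is rigid precisely because its cyclic order is determined by the disk boundary, while in the pliable vertex theory move (VI) corresponds to an ambient homeomorphism near the vertex that permutes the cyclic order of its boundary half-edges; this is exactly a topological vertex in $S_g \times I$.

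For the converse (stable equivalence implies diagrammatic equivalence), I would proceed in the standard way: a homeomorphism of surfaces fixing the graph can be decomposed, via a suitable CW decomposition respecting $G$, into elementary moves that either occur inside a disk disjoint from $G$ (giving no change to the diagram), or push an arc of $G$ across an edge of the decomposition (giving one of the Reidemeister moves after projection). Handle addition or subtraction disjoint from $G$ introduces or removes virtual crossings that can then be absorbed by the Detour move, so stabilization on the surface side produces only starred moves on the diagram side. Once one has shown that each elementary surface move descends to a finite sequence of generalized Reidemeister moves (of the appropriate flavor, rigid or pliable), both directions of the equivalence follow.

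The principal obstacle is the careful treatment of the vertex moves, specifically separating the rigid from the pliable case. Concretely, I expect the delicate point to be showing that move (VI) is \emph{necessary and sufficient} to realize arbitrary permutations of the cyclic order of half-edges induced by surface homeomorphisms near a topological vertex, and conversely that in the rigid vertex setting any surface isotopy respecting the vertex disk structure can be realized by moves (I)--(V), (I$^*$)--(V$^*$) without (VI). This must be checked without accidentally invoking the forbidden moves of Figure \ref{F:forbiddenmoves}, which is the only real novelty over the classical virtual link argument; everything else is a straightforward transcription of the Kamada--Kuperberg correspondence to the graph-with-vertex setting.
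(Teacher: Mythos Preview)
Your proposal is correct and follows essentially the same approach as the paper: the paper's proof consists of a single sentence referring the reader to the virtual link case in Carter--Kamada--Saito \cite{Car2000}, and what you have written is precisely a careful sketch of how that argument transports to the spatial graph setting, with the appropriate attention to the vertex moves (IV), (V), (V$^*$), (VI) and the rigid/pliable distinction. If anything, your outline is substantially more detailed than the paper's own proof.
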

\begin{proof}
The proof is similar to the proof for the case of virtual link diagrams as in
\cite{Car2000}.
\end{proof}
\begin{defn}
\label{D:cyclic}
A \textit{cyclic graph} consists of an abstract graph $G$ and a cyclic ordering of the half-edges at each vertex.
\end{defn}
\begin{defn}\cite{Mo}
A $\textit{ribbon graph}$ $\textbf{G}=(V(\textbf{G}),E(\textbf{G}))$ is a surface with boundary, represented as the union of two sets of discs: a set $V(\textbf{G})$ of vertices and a set $E(\textbf{G})$ of edges such that:
\begin{itemize}
  \item [(1)] the vertices and edges intersect in disjoint line segments;
  \item [(2)] each such line segment lies on the boundary of precisely one vertex and precisely one edge; and
  \item [(3)] every edge contains exactly two such line segments.
\end{itemize}
\end{defn}

It is well-known that ribbon graphs are equivalent to cellularly embedded graphs in surfaces, orientable or non-orientable.
Ribbon graphs arise naturally from small neighborhoods of cellularly embedded graphs.
On the other hand, topologically, a ribbon graph is a surface with boundary.
Capping-off the holes results in a band decomposition in the surface, which gives rise to a cellularly embedded graph in the obvious way.
A ribbon graph is $\textit{orientable}$ if it is orientable when viewed as a punctured surface.
Two ribbon graphs are $\textit{equivalent}$ if there is a homeomorphism taking one to the other that preserves the vertex-edge structure.
The homeomorphism should be orientation preserving when the ribbon graphs are orientable.
Here we will not go into ribbon graph deeply, we refer to interested reader to \cite{Mo} and references in it.

\begin{figure}[!htbp]
\centering
\includegraphics[width=3.5in]{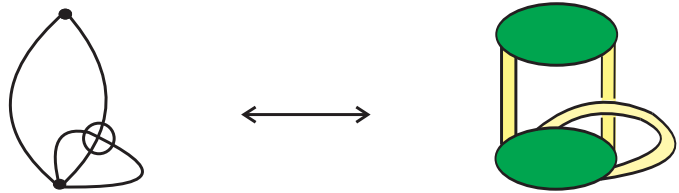}
\caption{The cyclic graph $G$ and its associated orientable ribbon graph $\textbf{G}_r$.}
\label{F:orientableribbon}
\end{figure}

\begin{figure}
\centering
\includegraphics[width=12cm]{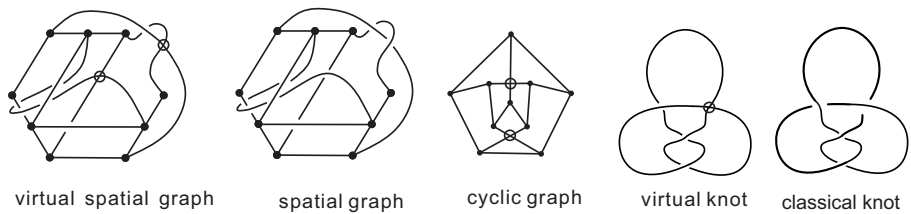}
\caption{Five types of objects.}
\label{F:fivegraphs}
\end{figure}

The theory of cyclic graphs is equivalent to the theory of graphs on orientable surfaces.
For an example of a cyclic graph and its associated orientable ribbon graph, see Figure \ref{F:orientableribbon}.
Note that there is a virtual crossing where two edges are traversing.
Figure \ref{F:fivegraphs} gives an example for the five kinds of virtual spatial graph diagrams. We define \textit{the number of boundary components of a cyclic graph $G$} to be the number of boundary components of the ribbon graph $\textbf{G}_r$ associated with $G$.

From a virtual spatial graph diagram $D$, an orientable ribbon graph $\textbf{D}_r$ (possibly non-connected) can be constructed naturally, i.e., a classical crossing in $D$ is considered as a disk of a degree 4 vertex.
Thus the vertex discs of $\textbf{D}_r$ correspond to classical crossings and vertices.
The edge discs correspond to arcs, where the arcs of a diagram start and end at classical crossings and vertices.
$D$ can be embedded into the closed surface $\textbf{F}_D$ formed by capping off the holes of $\textbf{D}_r$.
The genus of $\textbf{F}_D$ is called the genus of diagram $D$,
i.e., $g(D)=g(\textbf{F}_D)$. The reader can check that a formula of the genus of closed orientable surface $\textbf{F}_D$ is
\begin{eqnarray}
  g(\textbf{F}_D)&=&k(\textbf{D}_r)+\frac{|E(\textbf{D}_r)|-|V(\textbf{D}_r)|-bc(\textbf{D}_r)}{2},
  \label{E:genus}
\end{eqnarray}
where $|V(\textbf{D}_r)|$, $|E(\textbf{D}_r)|$, $k(\textbf{D}_r)$ and $bc(\textbf{D}_r)$ are the number of vertices, edges, connected components and boundary components of $\textbf{D}_r$, respectively. In $\textbf{D}_r$, classical crossings of $D$ are here seen as distinct from graphical vertices of $D$, virtual crossings of $D$ do not count as graphical vertices. Then the number of vertices of  $\textbf{D}_r$ includes both the number of graphical vertices and the number of classical crossings.
Let $|V(D)|$ and $c(D)$ be the number of vertices and classical crossings of $D$, respectively. Let $d(v)$ be the degree of vertex $v$ of $D$.
Then by Eq. (\ref{E:genus}), we get the genus of surface $\textbf{F}_D$ as follows:
\begin{eqnarray}
  g(\textbf{F}_D)&=&k(\textbf{D}_r)+c(D)+\frac{\sum_{v\in V(D)} d(v)}{4}-\frac{|V(D)|+c(D)+bc(\textbf{D}_r)}{2}.
\end{eqnarray}

Note that the formula is designed to not use edge counts.
Let $G$ be a virtual spatial graph.
The \textbf{genus} of $G$ is defined as $g(G)=min\{g(\textbf{F}_D)|D$ is any diagram of $G \}$.

\begin{rem}\label{R:generalresults}
It is well known that another motivation for the introduction of virtual knots is that all Gauss codes can be implemented.
Another motivation for the introduction of virtual spatial graphs is that all Gauss codes of graphs can be implemented.
For the definition of the Gauss code for spatial graphs and related results, see \cite{FlemingMellor,KKKP}.
\end{rem}

\section{The generalized Yamada polynomial of a virtual spatial graph}\label{S:GYP}

\noindent

In \cite{Fleming07}, Fleming and Meller followed Yamada's method in \cite{Yam} to define the Yamada polynomial of a virtual spatial graph diagram $D$.
After the deletion and contraction of edges, $D$ becomes a bouquet graph (cyclic graph with one vertex).
They defined that the Yamada polynomial of a bouquet graph is equal to the Yamada polynomial of the underlying bouquet graph,
i.e., the cyclic order of the edges incident to the vertex of bouquet is ignored. Such a calculation method cannot distinguish the non-equivalence between the two virtual spatial graphs in Figure \ref{twotheta}.

\begin{figure}
\centering
\includegraphics[width=8cm]{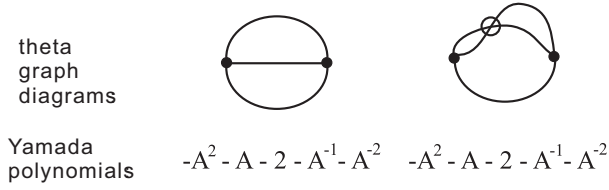}
\caption{Two theta graphs with the same Yamada polynomial.}
\label{twotheta}
\end{figure}

In this section, we want to use an idea commonly used in ribbon graph theory, that is, to count the number of boundary components of a cyclic graph to define a polynomial of a virtual spatial graph diagram.
In addition, we shall add several new variables to define a multivariate polynomial.
We call this the \textbf{generalized Yamada polynomial}.
We mainly are interested in the generalized Yamada polynomial of a virtual rigid vertex graph,
although the generalized Yamada polynomial can become a polynomial of a special type of a virtual pliable vertex graph, where the maximum degree of a virtual pliable vertex graph is at most 3.

We first define a polynomial with 7 variables associated with the virtual spatial graph diagram by recursion.
This polynomial must be specialized so that it can be a rigid vertex isotopy invariant.

Let $e$ be an edge of the underlying abstract graph of a virtual spatial graph diagram $D$.
We use $D-e$, $D/e$, and $D(\overline{e})$ to denote deletion, contraction, and marking of the edge $e$ in a virtual spatial graph diagram $D$, respectively.
$D(\overline{e})$ indicates that the edge $e$ is preserved in $D$ and the edge $e$ is marked with a short slash.

It is convenient to introduce a short-hand for the evaluation of the generalized Yamada
polynomial: the virtual spatial graph diagram $D$ itself will stand for the evaluation of $R(D)$.

\begin{defn}\label{defn}
Let $D$ be a virtual spatial graph diagram.
Let $\alpha,\beta,\gamma,x,y,z,\mu$ be the variables, and assume that the product $\alpha\beta\gamma xyz\mu \neq 0$.
The generalized Yamada polynomial of $D$, $R(D)=R(D;\alpha,\beta,\gamma,x,y,z,\mu)$, can be recursively defined as follows:
\begin{enumerate}
\item [(1)] If $D$ contains degree 2 vertices, and $D'$ is the graph diagram obtained after absorbing all degree 2 vertices (i.e., reserve half edges incident with the degree 2 vertex and erase the degree 2 vertex), then $R(D')=R(D)$.
\item [(2)] \Across = $\alpha$ \Bsmooth+ $\beta$ \Asmooth+ $\gamma$ \onevertex.
\item [(3)] \Bcross = $\alpha$ \Asmooth+ $\beta$ \Bsmooth+ $\gamma$ \onevertex.
\item [(4)] \onee\ =\ $x$ \deleteedge+ $y$ \markingedge\ .
\item [(5)] If each edge of a cyclic graph $D$ is a marked edge, then $R(D)=z^{k(D)}\mu^{bc(D)-k(D)}$, where $k(D)$ and $bc(D)$ are the number of connected components and boundary components of $D$, respectively.
In particular, if $D$ is an $n$-th order empty graph ($n$ disjoint vertices), then $R(D)=z^n$ and $R(\emptyset)=1$.
\end{enumerate}
\end{defn}

When a virtual spatial graph is a 2-regular graph, it becomes a virtual knot or link.
To define the polynomial invariants of a virtual spatial graph diagram, in the edges of the graph, absorbing or adding degree 2 vertices does not change the value of the polynomial.
Each diagram in the equations means that only the drawn part is different and the other parts are the same. Note that the marker is on y-coefficient graph in Definition \ref{defn} (4). There can also be some virtual crossings on the edge $e$, but for simplicity and convenience, we do not draw them in Definition \ref{defn} (4).
For Definition \ref{defn} (5), for example, suppose $D$ is an all-marked connected cyclic graph with 5 boundaries components, then $R(D)=z\mu^4$.

There are two natural ways of combining two virtual spatial graph diagrams $D_1$ and $D_2$.
The first, the \textit{disjoint union} $D_1\cup D_2$, is exactly the same as for graphs.
The second, the \textit{one-point join} $D_1\vee D_2$, is formed by taking disjoint virtual spatial graph diagrams $D_1$ and $D_2$, choosing any vertices $v_1\in V(D_1)$ and $v_2\in V(D_2)$, and uniting the vertices $v_1$ and $v_2$, combining their cyclic orders so that all the edges of $v_1$ and $v_2$ are consecutive in the new order.
The following proposition is obvious.

\begin{proposition}\label{P:cupvee}
Let $D_1$ and $D_2$ be two virtual spatial graph diagrams. Then
\begin{enumerate}
\item [(1)] $R(D_1\cup D_2)=R(D_1) R(D_2),$
\item [(2)] $R(D_1\vee D_2)=z^{-1} R(D_1) R(D_2).$
\end{enumerate}
\end{proposition}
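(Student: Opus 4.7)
The plan is to prove both identities by induction on the total complexity of $D_1$ and $D_2$, measured for instance as the number of classical crossings plus the number of unmarked edges. The inductive step is essentially automatic: clauses (2), (3), and (4) of Definition \ref{defn} are strictly local, so applying any one of them at a crossing or an edge of $D_1$ produces a linear combination of diagrams obtained by the very same local modification. Because this modification takes place inside $D_1$ and is disjoint from the chosen join vertex (and from $D_2$ entirely, in the case of $\cup$), it descends termwise to $D_1 \cup D_2$ and to $D_1 \vee D_2$, and the induction hypothesis applied to each resulting term yields the desired identity.

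It then suffices to check the base case in which every edge of both $D_1$ and $D_2$ is marked, so that the evaluation is given directly by clause (5). For the disjoint union the additivity
\[
k(D_1 \cup D_2) = k(D_1) + k(D_2), \qquad bc(D_1 \cup D_2) = bc(D_1) + bc(D_2)
\]
is immediate, and substitution into clause (5) yields (1). For the one-point join, (2) follows from the analogous but shifted equalities
\[
k(D_1 \vee D_2) = k(D_1) + k(D_2) - 1, \qquad bc(D_1 \vee D_2) = bc(D_1) + bc(D_2) - 1,
\]
which together produce the extra factor $z^{-1}$ while leaving the net exponent of $\mu$ unchanged.

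The main obstacle is the second of these equalities, which is precisely what supplies the $z^{-1}$ in (2). I would establish it by tracing the boundary of the associated orientable ribbon graph through a small neighborhood of the identified vertex. Because the cyclic order at the joined vertex is required to keep all edges of $v_1$ consecutive followed by all edges of $v_2$, the identification amalgamates the two vertex discs along a single pair of short boundary arcs; consequently exactly those two boundary circles of $\textbf{G}_r$ that formerly passed through these arcs (one from each side) fuse into a single circle, while every other boundary circle is preserved. The trivial corner cases in which $v_1$ or $v_2$ has degree zero (so that $D_1 \vee D_2$ collapses to the other factor) can be handled by direct inspection. Once this boundary-count is in hand, both identities of the proposition follow at once.
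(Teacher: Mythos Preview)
Your proposal is correct and follows essentially the same line as the paper's proof, only far more explicitly. The paper's argument is a two-sentence sketch: part (1) is declared trivial, and part (2) is reduced to the single observation that the one-point join of two cyclic graphs merges exactly two boundary components into one, so that clause (5) of Definition~\ref{defn} produces the factor $z^{-1}$. Your induction on complexity, the locality of the recursive rules, and the ribbon-graph boundary trace are precisely the details the paper leaves implicit. One small caution: the measure ``crossings plus unmarked edges'' need not strictly decrease under the $s_0$ resolution (a new degree-$4$ vertex can raise the edge count by two), so if you want a clean well-ordering use the lexicographic pair $(\#\text{crossings},\,\#\text{unmarked edges})$ instead; this does not affect the substance of your argument.
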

\begin{proof}
(1) is trivial. (2) follows from the fact that the one-point join of two cyclic graphs ensures that two boundary components will be merged into one component via Definition \ref{defn} (5).
\end{proof}

In a virtual spatial graph diagram, a non-loop edge that does not intersect other edges at classical or virtual crossings will be called a \textit{normal edge}.
Otherwise, it is called an \textit{abnormal edge}. It is easy to perform the edge contraction operation for the normal edge, but the contraction of an abnormal edge is a less intuitive task because of the need to take into account the vertex cyclic order. This is also the reason why we introduce the marking edge operation.

\begin{lem}\label{markcontract}
Let $D$ be a virtual spatial graph diagram and $e$ be a normal edge of $D$.
Then $R(D(\overline{e}))=R(D/e).$
\end{lem}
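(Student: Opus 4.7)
The plan is to prove the identity by a parallel reduction of $D(\overline{e})$ and $D/e$ under the recursion of Definition \ref{defn}. Since $e$ is normal, it does not appear at any classical crossing, so rules (2) and (3) applied to any classical crossing of $D$ act locally away from $e$ and commute with the operations ``mark $e$'' and ``contract $e$''. Rule (4) applied to any non-loop edge $e' \neq e$ likewise operates locally on $e'$ and commutes with marking or contracting $e$. Rule (1), which absorbs a degree $2$ vertex, can be arranged to be applied identically on the two sides provided we postpone the absorption of any degree $2$ vertex adjacent to $e$ until after $e$ itself has been handled. Consequently one may resolve all classical crossings and all non-loop edges other than $e$ in $D$ via a single sequence of recursion steps, performed verbatim on $D(\overline{e})$ and on $D/e$; after this reduction each leaf of the common recursion tree is a scalar multiple of an all-marked cyclic graph.

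It then suffices to verify the identity at a single leaf, i.e., to show that if $H$ is an all-marked cyclic graph containing a normal marked edge $e$ with endpoints $u \neq v$, then $R(H) = R(H/e)$. By rule (5) both sides are monomials of the form $z^{k}\mu^{bc-k}$, so the task reduces to showing that contracting the non-loop edge $e$ preserves both the number $k$ of connected components and the number $bc$ of boundary components of the associated orientable ribbon graph. The claim for $k$ is immediate because $e$ is not a loop. For $bc$ one passes to the ribbon-graph picture: the two vertex discs at $u$ and $v$ together with the edge disc of $e$ are glued along arcs into a single larger disc $\Delta$, and contracting $e$ is precisely the homeomorphism that replaces this union by $\Delta$, viewed as the merged vertex disc. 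Since contraction of a normal edge is therefore a homeomorphism of the ambient ribbon surface, $bc$ is unchanged.

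I expect the main obstacle to be the bookkeeping at low-degree endpoints of $e$: rule (1) forces absorption of any degree $2$ vertex, and during the intermediate recursion an endpoint of $e$ may acquire degree $2$, which must not be absorbed before $e$ itself has been processed (and which on the $D/e$ side requires care because $u$ and $v$ have already been merged into a single vertex $w$). Once one checks that such absorptions can always be deferred to after the leaf stage, the comparison again collapses to the ribbon-graph homeomorphism, and the parallel-recursion principle closes the argument.
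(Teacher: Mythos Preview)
Your proof is correct and follows essentially the same approach as the paper: both reduce to the observation that, in the evaluation via rule~(5), a marked normal edge contributes to the boundary-component count exactly as its contraction does, since contracting a non-loop edge of a ribbon graph is a homeomorphism of the underlying surface and hence preserves both $k$ and $bc$. Your parallel-recursion scaffolding and the concern about deferring rule~(1) are more elaborate than necessary---the paper dispatches the lemma in a single line by pointing directly at rule~(5)---but the underlying idea is identical.
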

\begin{proof}
According to the Definition \ref{defn} (5) of the generalized Yamada polynomial of a virtual spatial graph diagram, the contribution of marking a normal edge \makrboundary ~to the boundary component is equivalent to the contraction of the edge.
\end{proof}

\begin{rem}
For the normal edge, marking operations and contraction operations are consistent.
Using $R(D/e)$ to represent $R(D(\overline{e}))$ will simplify the process of diagrammatic calculation of the generalized Yamada polynomial of a virtual spatial graph diagram $D$.
\end{rem}

Next, in order to ensure that this polynomial $R(D;\alpha,\beta,\gamma,x,y,z,\mu)$ is invariant under regular isotopy, the value of some variables and relationship between the variables are determined.

\begin{proposition}\label{P:calculate}
\begin{itemize}
\item[(1)] $\selfloop=(x+y\mu)z$
\item[(2)] $\loopvertex=(x+y\mu) \downarc$
\item[(3)] $\twovertex =  x^2 \Bsmooth + (2xy +y^2\mu) \onevertex$
\end{itemize}
\end{proposition}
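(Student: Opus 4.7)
The plan is to systematically apply the defining recursion of Definition \ref{defn}, Lemma \ref{markcontract}, and the one-point join identity of Proposition \ref{P:cupvee}(2). For part (1), I would apply relation (4) to the single edge of $\selfloop$: the deletion term contributes $x\cdot R(\text{isolated vertex}) = xz$ by rule (5), and the marking term is a cyclic graph with one vertex and one marked self-loop, whose associated ribbon graph is an annulus and hence has one connected component and two boundary components, so rule (5) yields $z\mu$. Summing gives $(x+y\mu)z$.

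For part (2), I would apply relation (4) to the loop of $\loopvertex$. In the deletion term, the affected vertex has its degree reduced by two and becomes a degree-2 vertex that is absorbed via rule (1), leaving $\downarc$. In the marking term, I would view the configuration as the one-point join of the ``vertex-with-marked-loop'' diagram from part (1) (which has $R$-value $z\mu$) with $\downarc$; Proposition \ref{P:cupvee}(2) then contributes $z^{-1}\cdot z\mu\cdot R(\downarc) = \mu R(\downarc)$. Adding the two terms yields $(x+y\mu)R(\downarc)$.

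For part (3), I would apply relation (4) to each of the two parallel edges $e_1,e_2$ of the digon $\twovertex$, producing four terms with coefficients $x^2,\,xy,\,xy,\,y^2$. Deleting both edges leaves two degree-2 end-vertices that absorb via rule (1) to give $\Bsmooth$. Each mixed term uses Lemma \ref{markcontract} (both edges are normal) to reinterpret the marked edge as a contraction, collapsing the two 4-valent vertices into a single 4-valent vertex $\onevertex$. For the both-marked case, contracting $e_1$ via Lemma \ref{markcontract} leaves $e_2$ as a marked self-loop attached to $\onevertex$; the one-point-join argument of part (2) then shows this evaluates to $\mu R(\onevertex)$. Collecting the four contributions produces $x^2\Bsmooth + (2xy+y^2\mu)\onevertex$.

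The main obstacle throughout is the boundary-count bookkeeping that produces the factor $\mu$ whenever a marked loop is attached to an arbitrary sub-diagram. Rather than re-deriving this by direct ribbon-graph surgery at each step, I would handle it uniformly by reducing every such situation to Proposition \ref{P:cupvee}(2), so that the only genuine ``boundary count'' in the argument is the one in part (1).
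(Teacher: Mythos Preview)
Your argument is correct and follows essentially the same route as the paper's. The only difference worth noting is in part (2): the paper observes at the outset that $\loopvertex$ is already the one-point join $\selfloop\vee\downarc$ of two ordinary (unmarked) diagrams, so Proposition~\ref{P:cupvee}(2) together with part~(1) gives $z^{-1}\cdot(x+y\mu)z\cdot\downarc=(x+y\mu)\,\downarc$ in a single step, whereas you first expand the loop via relation~(4) and invoke the one-point-join identity only on the marked term. Both orderings are valid; the paper's avoids having to argue that Proposition~\ref{P:cupvee}(2) applies to a partially marked diagram, while yours makes the role of rule~(1) (absorbing the degree-$2$ vertex in the deletion term) explicit. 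For parts~(1) and~(3) your proof and the paper's are the same computation, with yours spelling out the appeals to Lemma~\ref{markcontract} and the boundary count that the paper leaves implicit in its diagrammatic shorthand.
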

\begin{proof}
\begin{itemize}
\item[(1)]
\begin{eqnarray*}
&\selfloop&=x~ \vertex  +  y~\loopmaker\\
& &=(x+y\mu)z
\end{eqnarray*}
\item[(2)] According to Proposition \ref{P:cupvee} (1) and above identity, we have
\begin{eqnarray*}
&\loopvertex&= z^{-1}\times \selfloop\times \downarc \\
& &=(x+y\mu)\downarc
\end{eqnarray*}
\item[(3)] \begin{eqnarray*}
&\twovertex &= x \oneedge  +  y\markedge \\
& &= x^2 \Bsmoothvertex +xy \onerightmark +xy \oneleftmark +y^2 \twomark\\
& &=  x^2 \Bsmooth + (2xy +y^2\mu) \onevertex
\end{eqnarray*}
\end{itemize}
\end{proof}

Proposition \ref{P:calculate} implies that the generalized Yamada polynomial of the standard diagram $\bigcirc$
 of the trivial knot is $(x+y\mu)z$.

\begin{lem}\label{RM2}
To ensure that $R(D)$ is invariant under Reidemeister moves (II) and (III),
then $\beta=\alpha^{-1},$ $\gamma=x^{-1}$, $y=1$, $z=-x^{-1}$ and $\mu=-(\alpha+\alpha^{-1}+2)x.$
\end{lem}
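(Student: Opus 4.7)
The plan is to verify invariance of $R(D)$ under Reidemeister move (II) directly, and then argue that invariance under Reidemeister move (III) follows by the standard Kauffman-bracket-style sliding argument once (II) holds. To check (II), I would take a bigon containing one crossing expanded by Definition~\ref{defn}(2) stacked on one expanded by Definition~\ref{defn}(3), apply both skein relations, and obtain a sum of $3\times 3 = 9$ local tangles indexed by pairs of choices from $\{\Bsmooth,\Asmooth,\onevertex\}$. Each such cyclic-graph fragment can then be reduced to the three-element basis $\{\Asmooth,\Bsmooth,\onevertex\}$ using Proposition~\ref{P:calculate} together with the degree-2 absorption rule Definition~\ref{defn}(1).

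Specifically: the vertical--vertical term is the straight bigon $\Asmooth$; the horizontal--horizontal term is $\Bsmooth$ together with an isolated trivial loop, contributing the factor $(x+y\mu)z$ by Proposition~\ref{P:calculate}(1); each mixed smoothing--smoothing term collapses to a bare $\Bsmooth$; each vertex--smoothing term is either a bare $\onevertex$ or a $\loopvertex$ adjacent to a cap, the latter evaluating via Proposition~\ref{P:calculate}(2) to $(x+y\mu)\Bsmooth$; and the vertex--vertex term is the 2-vertex bigon computed in Proposition~\ref{P:calculate}(3) as $x^2\Bsmooth + (2xy+y^2\mu)\onevertex$. Gathering the 9 contributions and equating the result with $\Asmooth$ (the flattened bigon), I extract three coefficient equations. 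The $\Asmooth$-equation is $\alpha\beta = 1$, so $\beta = \alpha^{-1}$. The $\onevertex$-equation is $\gamma(\alpha+\alpha^{-1}) + \gamma^2(2xy + y^2\mu) = 0$, and the $\Bsmooth$-equation is $\alpha^2 + \alpha^{-2} + (x+y\mu)z + \gamma(\alpha+\alpha^{-1})(x+y\mu) + \gamma^2 x^2 = 0$.

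Fixing the marker normalization $y = 1$ (compatible with Lemma~\ref{markcontract}, so that marking a normal edge coincides with contraction without a rescaling) and the loop-value normalization $\mu = -(\alpha+\alpha^{-1}+2)x$ (so that the trivial circle $(x+y\mu)z$ takes the expected Yamada-type value), the $\onevertex$-equation forces $\gamma = x^{-1}$. Substituting these values into the $\Bsmooth$-equation and simplifying via $\alpha^2+\alpha^{-2} = (\alpha+\alpha^{-1})^2 - 2$ collapses it to $-(\alpha+\alpha^{-1}+1)(1 + xz) = 0$, and since this should hold identically in $\alpha$, we get $z = -x^{-1}$. Invariance under (III) then follows by expanding the central crossing of an (III) configuration via Definition~\ref{defn}(2)--(3) and sliding the remaining crossings through each resulting tangle using the just-established (II) invariance; as in the Kauffman bracket, no new constraints on the variables arise.

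The main obstacle is the vertex-term bookkeeping inside the (II) expansion. Unlike the classical Kauffman bracket, in which one skein relation plus a loop value suffices, the $\gamma$-branch of our skein introduces 4-valent graph vertices whose incident edges interact with the adjacent smoothings to produce self-loops on vertices, two-vertex bigons, and degree-2 artifacts; each such configuration must be reduced back to $\{\Asmooth,\Bsmooth,\onevertex\}$ using the appropriate part of Proposition~\ref{P:calculate} and the absorption rule. Once that reduction is carried out cleanly, pulling the five stated relations out of the three coefficient equations together with the two normalizations is a short algebraic computation.
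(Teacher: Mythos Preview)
Your Reidemeister (II) computation is correct and matches the paper's. The gap is in the (III) argument. The skein relation here has \emph{three} terms, not two: expanding the central crossing in an R3 triangle produces, in addition to the two smoothings, a $\gamma$-term with a 4-valent vertex. Comparing the two sides of (III) on that $\gamma$-branch requires sliding the remaining strand past this vertex, which is move (IV), not move (II). The Kauffman-bracket sliding trick works precisely because the bracket skein has no vertex term; it does not transfer verbatim to the Yamada-type skein. The paper deals with this by carrying out an explicit triangle computation (two crossings against one) that yields a second system of equations, from which $\gamma x=1$, $y=1$, and $\mu=-(\alpha+\alpha^{-1}+2)x$ are \emph{derived}; it then proves (IV) by induction on vertex degree using the edge relation (4), and finally obtains (III) from (II)$+$(IV).

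This also undercuts your treatment of $y=1$ and $\mu=-(\alpha+\alpha^{-1}+2)x$ as ``normalizations.'' From (II) alone you have three scalar equations in the five unknowns $\beta,\gamma,y,z,\mu$ (given $\alpha,x$), so a 2-parameter family survives; it is the (III)/(IV) computation that cuts this family down to the stated values. If you want to keep your presentation, you must at minimum insert a proof of move (IV) invariance (e.g.\ the paper's edge-expansion induction) and then check that (III) follows from (II)$+$(IV); but you should also acknowledge that $\gamma=x^{-1}$, $y=1$, and the $\mu$ formula are consequences of that analysis rather than independent choices.
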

\begin{proof}
On the one hand,
\begin{eqnarray*}\label{r2invariant}
\ReidMove
&=& \alpha\beta \Asmooth  + [\alpha^2+\beta^2+\alpha\beta(xz+y\mu z)]\Bsmooth
     +(\alpha+\beta)\gamma \onevertex\\
& &+ \alpha\gamma\downvertexloop + \beta\gamma\upvertexloop+ \gamma^2 \twovertex \\
&=&\alpha\beta \Asmooth  + [\alpha^2+\beta^2+\alpha\beta(xz+y\mu z)+
   \gamma(\alpha+\beta)(x+y\mu)+\gamma^2x^2]\Bsmooth\\
& &+[(\alpha+\beta)\gamma+\gamma^2(2xy+y^2\mu)]\onevertex  \\
\end{eqnarray*}
In order to have $R(\ReidMove)=R(\Asmooth)$, we take the following relations:
\begin{equation}
\left\{ \begin{array}{ll}
  \alpha\beta = 1 \\
  \alpha^2+\beta^2+\alpha\beta(xz+y\mu z)+\gamma(\alpha+\beta)(x+y\mu)+\gamma^2x^2=0 \\
  (\alpha+\beta)\gamma+\gamma^2(2xy+y^2\mu) =0
\end{array}
\right.  \label{eq:rm2}
\end{equation}
Thus $\beta=\alpha^{-1}.$

On the other hand,
\begin{eqnarray*}
\threevertex
&=& \alpha^2 \aav  + \alpha^{-2} \bbv + \abv + \bav + \alpha\gamma\av+\alpha^{-1}\gamma \bv
   +\alpha^{-1}\gamma \vb \\
& & +\alpha\gamma\va+\gamma^2\vv  \\
&=& \alpha^2 \aav  + \alpha^{-2} \bbv + \abv + (x+y\mu)\onearc + \alpha\gamma(x \twothree+y\five)\\
& &+ \alpha^{-1}\gamma [x^2\onearc +(2xy+y^2\mu)\bbv] +\alpha^{-1}\gamma (x \threetwo+y\five)\\
& &+\alpha\gamma[x^2\onearc +(2xy+y^2\mu)\aav]+\gamma^2[x^3\onearc +(y^3\mu+3xy^2) \five\\
& &+ x^2y(\bbv+\aav+\crossarc)]  \\
&=&[\alpha^2+\alpha\gamma(2xy+y^2\mu) +\gamma^2x^2y] \aav
  + [\alpha^{-2}+\alpha^{-1}\gamma(2xy+y^2\mu) +\gamma^2x^2y] \bbv \\
& &+ \abv + \alpha\gamma x \twothree+\alpha^{-1}\gamma x \threetwo+[ \alpha\gamma y +\alpha^{-1}\gamma y+\gamma^2(y^3\mu+3xy^2)] \five \\
& &+\gamma^2x^2y\crossarc+ [(x+y\mu)+\alpha^{-1}\gamma x^2+\alpha\gamma x^2+\gamma^2x^3] \onearc
\end{eqnarray*}
\begin{eqnarray*}
\upthree
&=& \alpha \twothree  + \alpha^{-1} \threetwo +\gamma x\abv + \gamma y \five\\
\end{eqnarray*}
In order to have $R(\threevertex)=R(\upthree)$, we take
\begin{equation}
\left\{ \begin{array}{ll}
  \gamma x=1\\
  x+y\mu+\alpha^{-1}\gamma x^2+\alpha\gamma x^2+\gamma^2x^3 =0 \\
  \alpha^2+\alpha\gamma(2xy+y^2\mu) +\gamma^2x^2y=0 \\
   \alpha^{-2}+\alpha^{-1}\gamma(2xy+y^2\mu) +\gamma^2x^2y=0\\
    \alpha\gamma y +\alpha^{-1}\gamma y +\gamma^2(y^3\mu+3xy^2)=\gamma y
\end{array}
\right.  \label{eq:rm3}
\end{equation}
Solving the system of Eq. (\ref{eq:rm3}), we firstly obtain that $\gamma=x^{-1}$.
Based on  $\gamma=x^{-1}$, we can simplify Eq. (\ref{r2invariant}) and Eq. (\ref{eq:rm3}) to be the following equations:

\begin{equation}
\left\{ \begin{array}{ll}
  \alpha^2+\alpha^{-2}+(xz+y\mu z)+x^{-1}(\alpha+\alpha^{-1})(x+y\mu)+1=0 \\
  (\alpha+\alpha^{-1})+2y+x^{-1}y^2\mu =0\\
  (\alpha+\alpha^{-1})x+2x+y\mu=0 \\
  \alpha^2+2\alpha y+\alpha x^{-1}y^2\mu +y=0 \\
   \alpha^{-2}+2\alpha^{-1} y+\alpha^{-1} x^{-1}y^2\mu +y=0 \\
    (\alpha+\alpha^{-1})y +x^{-1}(y^3\mu+3xy^2)=y
\end{array}
\right.  \label{eq:simplify1}
\end{equation}
Solving the second and third identities of the system of Eq. (\ref{eq:simplify1}), we get that $\mu=-\frac{(\alpha+\alpha^{-1})+2y}{x^{-1}y^2}=-\frac{(\alpha+\alpha^{-1})x+2x}{y}.$
Meanwhile it can be solved that
\begin{equation}
\left\{ \begin{array}{ll}
  y=1\\
 \mu =(-\alpha-\alpha^{-1}-2)x
\end{array}
\right.  \label{eq:rm4}
\end{equation}
Hence we can figure out that $z=-x^{-1}$ according to the first identity of Eq. (\ref{eq:simplify1}), and all other identities of Eq. (\ref{eq:simplify1}) are correct when $y=1$ and $\mu =(-\alpha-\alpha^{-1}-2)x.$ This ensures that $R(D)$ is invariant under move (II).

\noindent If the vertex degree is at least 3, then by the definition and induction hypothesis of the generalized Yamada polynomial, we have
\begin{align*}
  R(\ndv) & =R(\ndvae)-xR(\ndvde) \\
   & = R(\ndvaes)-xR(\ndvdes) \\
   &=R(\ndvs)\\
\end{align*}
This ensures that $R(D)$ is invariant under move (IV).
From the definition of $R(D)$ and its invariance under the moves (II) and (IV), we can obtain that
\begin{eqnarray*}
\Gte&=&\alpha\graphone +\alpha^{-1}\graphtwo+ \gamma\graphthree \\
&=&\alpha\graphfour +\alpha^{-1}\graphfive+ \gamma\graphsix \\
&=&\Gts
\end{eqnarray*}
Hence, $R(D)$ is invariant under move (III).
\end{proof}

\begin{thm}\label{regiso}
When variables satisfy that
$$(\alpha,\beta,\gamma,x,y,z,\mu)=(\alpha,\alpha^{-1},x^{-1},x,1,-x^{-1},
-(\alpha+\alpha^{-1}+2)x),$$
then $R(D)$ is a regular isotopy invariant. That is, the generalized Yamada polynomial $R(D;\alpha,x)$ of a virtual spatial graph diagram is a two-variable polynomial and a regular isotopy ((II)(III) and Figure \ref{F:virtualmoves}) invariant.
\end{thm}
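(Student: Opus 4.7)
Lemma \ref{RM2} has already shown that, under the specialization $(\alpha,\beta,\gamma,x,y,z,\mu) = (\alpha,\alpha^{-1},x^{-1},x,1,-x^{-1},-(\alpha+\alpha^{-1}+2)x)$, the polynomial $R(D)$ is invariant under the classical Reidemeister moves (II), (III), and (IV). Since regular isotopy is generated by (II), (III) together with the virtual moves (I*)--(V*) of Figure \ref{F:virtualmoves}, what remains is to verify invariance of $R(D)$ under each of the virtual moves, after which the two-variable statement follows simply by substitution.

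The plan is to exploit the fact that, after recursively applying the skein relations (2)--(4) of Definition \ref{defn}, every $R$-value is expanded as a polynomial-coefficient combination of values $z^{k}\mu^{bc-k}$ on all-marked cyclic graphs, where $k$ and $bc$ denote the numbers of connected and boundary components of the associated ribbon graph. For each of the purely virtual moves (I*)--(IV*), neither classical crossings nor the cyclic orderings of half-edges at the graphical vertices are altered, so the skein trees expanding the left- and right-hand sides are in natural bijection. It therefore suffices to check that each pair of corresponding terminal all-marked cyclic graphs yields equal values of $k$ and $bc$. This reduces to the observation that the virtual detour move (equivalent to a combination of (I*)--(IV*)) preserves the underlying abstract cyclic graph, and hence the ribbon graph up to the equivalence of Section \ref{S:virtualsg}, which gives equal $k$ and $bc$ on both sides.

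For the mixed move (V*), which involves a classical crossing adjacent to virtual crossings, I would first apply the skein relation (2) or (3) at the classical crossing on each side of (V*). Each of the three resulting terms---the two smoothings and the one-vertex term---is now a diagram whose only crossings at the relevant location are virtual, and so is related to the corresponding term on the other side of (V*) by a sequence of moves from (I*)--(IV*). Invariance under (V*) then follows from the already-established invariance under (I*)--(IV*), combined with the linearity of the skein expansion. The main obstacle I expect will be the one-vertex term in the expansion at (V*): I need to verify that the cyclic order of half-edges at the newly introduced degree-two vertex matches on both sides, so that the resulting cyclic graphs are truly identified by a purely virtual detour rather than by an illegal reordering forbidden in the rigid-vertex setting. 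Once this local bookkeeping is settled, the invariance claim closes and the substitution in Lemma \ref{RM2} produces the advertised two-variable regular isotopy invariant $R(D;\alpha,x)$.
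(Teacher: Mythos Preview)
Your overall strategy matches the paper's: the purely virtual moves leave the underlying cyclic/ribbon graph of every terminal state unchanged, hence preserve $k$ and $bc$; the one move that mixes a classical crossing with virtual crossings is handled by expanding that classical crossing via the skein relation and reducing each of the three resulting terms to a purely virtual situation. This is exactly what the paper sketches when it says the remaining move ``is similar to the case of move (III).''

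However, you have swapped the roles of (IV*) and (V*). In this paper's conventions, (IV*) is the \emph{mixed} semi-virtual move (a strand with virtual crossings slides past a classical crossing), while (V*) is the purely virtual move in which a strand with a virtual crossing slides past a graph \emph{vertex}. Thus your ``purely virtual'' argument for (I*)--(IV*) actually applies to (I*)--(III*) and (V*), and your ``mixed move'' argument for (V*) is really the argument needed for (IV*). With the labels corrected, your proof is essentially the paper's. One further slip: the vertex produced by the skein term $x^{-1}\,\onevertex$ when resolving a classical crossing has degree $4$, not degree $2$; your worry about the cyclic order at that vertex is then resolved precisely by invariance under the genuine (V*) (virtual strand past a vertex), which you have already established in the purely virtual batch.
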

\begin{proof}
It is obvious that $R(D)$ is invariant under moves except (IV*) in Figure \ref{F:virtualmoves}.
Furthermore, $R(D)$ is invariant under moves (IV*) which is similar to the case of move (III).
Thus $R(D)$ is a regular isotopy invariant.
\end{proof}

We reformulate the Definition \ref{defn} as follows:
\begin{defn}\label{definition1}
Let $D$ be a virtual spatial graph diagram. Then the generalized Yamada polynomial $R(D;\alpha,x)$ satisfies the following relations:
\begin{enumerate}
\item [(1)] If $D$ contains degree 2 vertices, and $D'$ is the graph diagram obtained after absorbing all degree 2 vertices, then $R(D')=R(D).$
\item [(2)] \Across = $\alpha$ \Bsmooth+ $\alpha^{-1}$ \Asmooth+ $x^{-1}$ \onevertex.
\item [(3)] \Bcross = $\alpha$ \Asmooth+$\alpha^{-1}$ \Bsmooth+ $x^{-1}$ \onevertex.
\item [(4)] \onee\ =\ $x$ \deleteedge+ \markingedge\ .
\item [(5)] If each edge of a cyclic graph $D$ is a marked edge, then $R(D)=(-x)^{bc(D)-2k(D)}(\alpha+\alpha^{-1}+2)^{bc(D)-k(D)}$, where $k(D)$ and $bc(D)$ are the number of connected components and boundary components of $D$, respectively. In particular, if $D$ is an $n$-th order empty graph ($n$ disjoint vertices), then $R(D)=(-x)^{-n}$ and $R(\emptyset)=1$.
\end{enumerate}
\end{defn}

In the following, for a virtual spatial graph diagram $D$, the generalized Yamada polynomial of $D$ will always mean $R(D;\alpha,x)$. Let $e$ be an edge of a virtual spatial graph  $G$. We say that $e$ is an \textit{isthmus} of $G$ or its diagram $D$ if $G-e$ as abstract graph has more connected components than $G$ as abstract graph.

\begin{proposition}\label{pro1}
If a virtual spatial graph diagram $D$ contains a normal isthmus, then $R(D)=0.$
\end{proposition}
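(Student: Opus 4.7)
The plan is to apply the edge-expansion rule of Definition \ref{definition1}(4) to the normal isthmus $e$, which yields
$$R(D) = x\,R(D-e) + R(D(\overline{e})).$$
Since $e$ is normal, Lemma \ref{markcontract} lets me replace $R(D(\overline{e}))$ by $R(D/e)$, so the task reduces to showing $x\,R(D-e) + R(D/e) = 0$.

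The next step is to exploit the isthmus property to factor both terms through Proposition \ref{P:cupvee}. Let $A$ denote the connected component of $D-e$ containing one endpoint of $e$, and let $B$ denote the disjoint union of all remaining components, which contains the other endpoint (a loop cannot be a bridge, so the two endpoints are distinct). Then $D-e = A \sqcup B$. Because $e$ is normal, its two half-edges occupy a single pair of adjacent slots in the cyclic orders at each endpoint, so contracting $e$ merely identifies those two endpoints and concatenates their cyclic orders with all edges of each vertex consecutive---this is precisely the one-point join described before Proposition \ref{P:cupvee}. Hence $D/e = A \vee B$, and Proposition \ref{P:cupvee} delivers
$$R(D-e) = R(A)R(B), \qquad R(D/e) = z^{-1}R(A)R(B).$$

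To finish, I would substitute the specialization $z = -x^{-1}$ from Theorem \ref{regiso}, so that $z^{-1} = -x$, and conclude
$$R(D) = x\,R(A)R(B) + (-x)R(A)R(B) = 0.$$
The only step requiring any care is identifying the contraction $D/e$ of a normal isthmus with the one-point join $A \vee B$ as cyclic graphs; this is forced by the normality of $e$ (adjacency of its half-edges in both cyclic orders) together with the fact that its endpoints are distinct. Everything else is a direct application of the recursion and the product/join formulas already established.
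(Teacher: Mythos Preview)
Your proof is correct and follows essentially the same route as the paper's: expand along the normal isthmus via Definition~\ref{definition1}(4), use Lemma~\ref{markcontract} to pass from $R(D(\overline{e}))$ to $R(D/e)$, factor both terms via Proposition~\ref{P:cupvee}, and invoke $xz=-1$ to cancel. The paper's proof is terser (it writes the final step as $(xz+1)R(D_1\vee D_2)=0$), but the logic is the same; your version simply makes explicit the intermediate appeal to Lemma~\ref{markcontract} and the identification of $D/e$ with the one-point join.
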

\begin{proof}
Let $e$ be a normal isthmus of $D$. Then there exists $D_1$, $D_2$ such that $D-e=D_1 \cup D_2$ and $D/e=D_1 \vee D_2$.
By the definition of generalized Yamada polynomial, when $xz=-1$, $R(D)=x R(D-e)+R(D/e)=(xz+1)R(D_1 \vee D_2)=0$.
\end{proof}

Just as \cite{Yam}, the behaviors of $R(D;\alpha,x)$ under moves (I), (V) and (VI) are illustrated in Figure \ref{F:yamada}.
The generalized Yamada polynomial $R(D;\alpha,x)$ is changed under moves (I), (V) and (VI).

\begin{figure}
\centering
\includegraphics{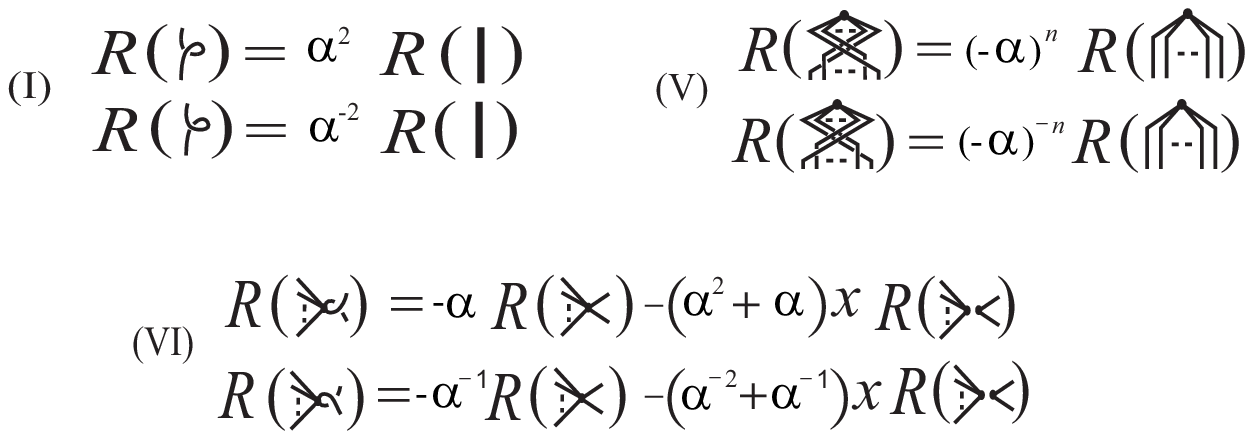}
\caption{The behaviors of $R(D)$ under moves (I), (V) and (VI). In move (V), the degree of vertex is $n$.}
\label{F:yamada}
\end{figure}

Let $G$ be a virtual spatial graph and $D$ be a diagram of $G$.
Based on the formulas of Figure \ref{F:yamada},
the generalized Yamada polynomial can be normalized as $\overline{R}(G;\alpha,x)=(-\alpha)^{-m}R(D;\alpha,x)$, where $m$ is the minimum degree of $\alpha$ in $R(D)$.
Then we can obtain the following result.
\begin{thm}
Let $G$ be a virtual spatial graph. Then $\overline{R}(G;\alpha,x)$ is a rigid vertex isotopy invariant.
When the maximum degree of $G$ is at most 3, $\overline{R}(G;\alpha,x)$ is a pliable vertex isotopy invariant.
\end{thm}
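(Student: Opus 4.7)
The plan is to bootstrap from the regular isotopy invariance already established in Theorem \ref{regiso} and to reduce the remaining work to checking the moves that are not regular isotopies: namely (I), (IV), (V) for the rigid vertex statement, and additionally (VI) for the pliable vertex statement. Since the normalizing factor $(-\alpha)^{-m}$ depends only on the polynomial $R(D;\alpha,x)$ (through its minimum $\alpha$-degree $m$), any move that leaves $R(D;\alpha,x)$ unchanged automatically leaves $\overline{R}(G;\alpha,x)$ unchanged. Theorem \ref{regiso} therefore gives invariance of $\overline{R}$ under moves (II), (III), and (I*)--(V*) for free, and Lemma \ref{RM2} gives invariance under move (IV) for free.

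For moves (I) and (V), Figure \ref{F:yamada} records that the effect on $R(D;\alpha,x)$ is multiplication by a monomial of the form $(-\alpha)^{\varepsilon}$ with $\varepsilon\in\{+1,-1\}$ depending on the sign of the curl (for (I)) or on the local orientation data at the vertex (for (V)). Granted this monomial behavior, if $D'$ is obtained from $D$ by one of these moves, then $R(D';\alpha,x)=(-\alpha)^{\varepsilon}R(D;\alpha,x)$, so the minimum $\alpha$-degree shifts by exactly $\varepsilon$ and
\[\overline{R}(D';\alpha,x)=(-\alpha)^{-m(D)-\varepsilon}\,(-\alpha)^{\varepsilon}R(D;\alpha,x)=\overline{R}(D;\alpha,x).\]
This settles the rigid vertex case once the entries of Figure \ref{F:yamada} are verified directly from Definition \ref{definition1}, which is a short diagrammatic calculation using clauses (2)--(4) together with Proposition \ref{P:calculate}.

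For the pliable vertex statement under the hypothesis that the maximum degree is at most $3$, only move (VI) remains. Vertices of degree $0$ or $1$ admit no nontrivial cyclic reordering, and a degree-$2$ vertex is absorbed via clause (1) of Definition \ref{definition1}; so only degree-$3$ vertices matter, and there the only nontrivial instance of move (VI) is an adjacent transposition of two of the three half-edges. I would verify that this transposition again multiplies $R(D;\alpha,x)$ by a monomial $(-\alpha)^{\pm1}$ --- the computation is local at the vertex and reduces to expanding with clause (2) of Definition \ref{definition1} and comparing boundary-component counts of the resulting marked cyclic subgraphs --- after which the normalization argument above closes the pliable case.

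The main obstacle, and the reason the degree hypothesis cannot be dropped, is the behavior of move (VI) at vertices of degree $\geq 4$. Expanding such a vertex via clause (2) of Definition \ref{definition1} yields a sum of smoothings whose boundary-component counts change non-uniformly under a cyclic rearrangement of four or more half-edges, so the net effect on $R(D;\alpha,x)$ is a genuine polynomial change rather than a monomial prefactor. No normalization of the form $(-\alpha)^{-m}$ can absorb such a change, so the degree bound is sharp; in writing up the degree-$3$ verification I would make the failure mode for degree $\geq 4$ explicit, thereby justifying the hypothesis.
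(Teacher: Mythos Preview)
Your overall architecture is correct and matches the paper's: normalize by $(-\alpha)^{-m}$, use Theorem~\ref{regiso} and the move-(IV) computation inside the proof of Lemma~\ref{RM2} to cover (II), (III), (IV), (I*)--(V*), and then handle (I), (V), (VI) via Figure~\ref{F:yamada}. One small correction: the exponent $\varepsilon$ is not in $\{+1,-1\}$ in general---move~(I) contributes $\alpha^{\pm 2}$ (see the skein relations in Section~\ref{S:Dubrovnik}), and move~(V) at a vertex of degree $n$ contributes a power depending on $n$, as indicated in Figure~\ref{F:yamada}. Your normalization argument is insensitive to the particular integer $\varepsilon$, so this does not damage the proof, but you should not pin down the value.

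The genuine point of divergence is your treatment of move~(VI) at a trivalent vertex. You propose a direct local computation showing that the adjacent transposition multiplies $R(D;\alpha,x)$ by a monomial in $(-\alpha)$. The paper instead observes a diagrammatic fact: at a degree-$3$ vertex, move~(VI) can be realized as a composite of move~(V) and move~(IV). Since $\overline{R}$ has already been shown invariant under (IV) and (V), invariance under (VI) follows immediately with no further calculation. This reduction is both shorter and more robust than your direct expansion (and it explains, without computation, why the monomial you are looking for must exist and what its exponent is). Your last paragraph, explaining why the reduction fails for degree $\geq 4$, is a nice complement; the paper merely asserts that (VI) changes $\overline{R}$ in general without giving the mechanism.
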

\begin{proof}
The moves (I) and (V) do not change the value of $\overline{R}(G;\alpha,x)$.
But under the move (VI) it will change the value of $\overline{R}(G;\alpha,x)$, so it is not a pliable vertex isotopy invariant in a general case.
However, there are exceptions.
When the maximum degree of the graph is less than or equal to 3, the result of the move (VI) at a degree 3 vertex is equivalent to the result of the moves (V) and (IV).
Thus, when the maximum degree of the graph is at most 3, $\overline{R}(G;\alpha,x)$ is a pliable vertex isotopy invariant.
\end{proof}

Let $L$ be an oriented virtual link and $D$ be a diagram of $L$.
The writhe of $D$, which is denoted by $w(D)$, is defined to the sum of the signs of classical crossings of $D$.
If $D$ has no classical crossings, then $w(D)=0$.
We defined a polynomial for $L$ as follows.

\begin{eqnarray}
\widetilde{R}(L;\alpha,x)&=&\alpha^{-2w(D)}R(D;\alpha,x).
\end{eqnarray}

Then, we have the following result.
\begin{thm}
Let $L$ be an oriented virtual link.
Then $\widetilde{R}(L;\alpha,x)$ is an ambient isotopy invariant.
\end{thm}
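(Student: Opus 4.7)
The plan is to build on Theorem \ref{regiso}, which already gives invariance of $R$ under moves (II), (III) and all the virtual moves (I*)--(V*). Each of these moves also preserves the writhe $w(D)$: move (II) inserts or removes a pair of classical crossings of opposite sign, move (III) rearranges a fixed set of classical crossings without altering their signs, and the virtual moves never change the sign of a classical crossing (in (IV*) the single classical crossing involved is merely slid past a virtual crossing). Consequently both factors in $\widetilde{R}(L;\alpha,x)=\alpha^{-2w(D)}R(D;\alpha,x)$ are separately invariant under all of these moves, and the only remaining thing to check is the classical first Reidemeister move.

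For move (I), a positive curl increases $w(D)$ by one, so $\alpha^{-2w(D)}$ picks up $\alpha^{-2}$; I therefore want to show that $R(D)$ picks up a compensating factor of $\alpha^{2}$. Applying the three-term skein relation of Definition \ref{definition1}(2) at the crossing of the curl, I would identify the geometric meaning of the three terms: one smoothing collapses the curl to the bare strand $L_0$, the other produces $L_0$ disjointly united with a small unknot, and the vertex term places a $4$-valent vertex on $L_0$ carrying a self-loop formed by the former curl arc. Proposition \ref{P:calculate}(1) assigns the disjoint unknot the value $(x+\mu)z$, while Proposition \ref{P:calculate}(2), combined with the degree-$2$ vertex absorption of Definition \ref{definition1}(1), evaluates the vertex-with-self-loop configuration as $(x+\mu)\,R(L_0)$. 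Collecting gives
\begin{equation*}
R(\text{positive curl}) = \bigl[\alpha(x+\mu)z+\alpha^{-1}+x^{-1}(x+\mu)\bigr]R(L_0),
\end{equation*}
and the specializations $z=-x^{-1}$ and $\mu=-(\alpha+\alpha^{-1}+2)x$ from Lemma \ref{RM2} (which yield $(x+\mu)z=1+\alpha+\alpha^{-1}$ and $x^{-1}(x+\mu)=-(1+\alpha+\alpha^{-1})$) collapse the bracket to $\alpha^{2}$. The negative curl case is symmetric with $\alpha$ and $\alpha^{-1}$ swapped and yields the factor $\alpha^{-2}$, as required.

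The only delicate point, and the main potential pitfall, is matching the two smoothings in Definition \ref{definition1}(2) with the actual geometry of positive versus negative curls so that the exponent of $\alpha$ comes out with the correct sign; once that bookkeeping is settled the algebraic simplification is immediate. Combining the two paragraphs above then yields invariance of $\widetilde{R}$ under every generalized Reidemeister move, so $\widetilde{R}(L;\alpha,x)$ is an ambient isotopy invariant of the oriented virtual link $L$.
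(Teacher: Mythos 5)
Your argument is correct and is essentially the paper's (here implicit) one: regular isotopy invariance from Theorem \ref{regiso} plus the curl relations $R(\positivecurl)=\alpha^{2}R(\downarc)$ and $R(\negativecurl)=\alpha^{-2}R(\downarc)$, which the paper records in Section \ref{S:Dubrovnik} and Figure \ref{F:yamada} and which exactly cancel against the writhe normalization $\alpha^{-2w(D)}$. Your explicit derivation of the factor $\alpha^{\pm2}$ via Definition \ref{definition1}(2) and Proposition \ref{P:calculate} checks out, and your observation that moves (II), (III) and (I*)--(V*) preserve $w(D)$ completes the verification.
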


According to the Definition \ref{definition1}, the following result is obvious.
\begin{proposition}
Let $D^*$ be the mirror image of a virtual spatial graph diagram $D$. Then $R(D^*;\alpha,x)=R(D;\alpha^{-1},x)$.
\end{proposition}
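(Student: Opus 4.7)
The plan is a straightforward induction on $c(D)$, the number of classical crossings of $D$. The key observation is that the mirror operation $D \mapsto D^*$ interchanges each $\Across$ with a $\Bcross$ at every classical crossing, while leaving the underlying graph structure, all graph vertices, and all virtual crossings untouched. So I want to show that this global swap of crossing types corresponds exactly to the substitution $\alpha \leftrightarrow \alpha^{-1}$ in the recursive evaluation of $R$.

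First I would inspect the five defining relations of Definition \ref{definition1} for their symmetry in $\alpha \leftrightarrow \alpha^{-1}$. Relations (1) and (4) involve only the variable $x$ and the combinatorial operations of absorbing degree-2 vertices, deletion, and marking; they are independent of $\alpha$. Relation (5) depends on $\alpha$ only through the factor $(\alpha+\alpha^{-1}+2)^{bc(D)-k(D)}$, which is manifestly symmetric. Finally, relations (2) and (3) are interchanged by the substitution $\alpha \leftrightarrow \alpha^{-1}$: rewriting (2) with $\alpha$ replaced by $\alpha^{-1}$ produces exactly the right-hand side of (3), and vice versa (the vertex-term coefficient $x^{-1}$ is unaffected).

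With this in hand, the induction proceeds as follows. For the base case $c(D) = 0$, the diagram $D$ has no classical crossings and so $D^* = D$; moreover, its evaluation uses only relations (1), (4), (5), each of which is symmetric under $\alpha \leftrightarrow \alpha^{-1}$, so $R(D;\alpha,x) = R(D;\alpha^{-1},x)$ trivially. For the inductive step, assume the identity for all diagrams with fewer than $n$ classical crossings, and take $D$ with $c(D) = n$. Choose any classical crossing of $D$; without loss of generality it is an $\Across$, so in $D^*$ it is a $\Bcross$. Apply relation (2) to $D$ and relation (3) to $D^*$ at this crossing. The resulting three smoothed/vertex-replaced diagrams, call them $E_1, E_2, E_3$, appear in both expansions; in $D^*$'s expansion they appear with coefficients obtained from $D$'s expansion by swapping $\alpha \leftrightarrow \alpha^{-1}$. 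Each $E_i$ has $n-1$ classical crossings, and by construction $(E_i$ coming from $D^*$) equals the mirror of ($E_i$ coming from $D$). Applying the induction hypothesis to each $E_i$ and combining with the swapped coefficients yields $R(D^*;\alpha,x) = R(D;\alpha^{-1},x)$, as required.

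There is no real obstacle here; the entire content of the argument is the term-by-term matching between the two skein relations (2) and (3) under $\alpha \leftrightarrow \alpha^{-1}$ and the $\alpha$-symmetry of relation (5). The only point requiring a moment of care is bookkeeping in the inductive step to confirm that the three diagrams produced from $D^*$ are literally the mirror images of those produced from $D$ (so that the induction hypothesis applies to them), which is clear since smoothing and vertex replacement commute with the mirror operation.
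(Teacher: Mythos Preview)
Your proof is correct and is precisely the argument the paper has in mind: the paper simply states that the result is ``obvious'' from Definition~\ref{definition1}, and you have spelled out the details of that observation via induction on the number of classical crossings. The symmetry of relation~(5) in $\alpha\leftrightarrow\alpha^{-1}$, the $\alpha$-independence of relations~(1) and~(4), and the interchange of relations~(2) and~(3) under $\alpha\leftrightarrow\alpha^{-1}$ are exactly the content of ``according to Definition~\ref{definition1}''.
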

This proposition implies the following two theorems.
\begin{thm}
Let $G$ be a virtual rigid vertex graph.
If $G$ is amphicheiral (i.e., rigid vertex isotopic to the mirror image of itself), then $\overline{R}(G;\alpha,x)=(-\alpha)^{d}\overline{R}(G;\alpha^{-1},x)$,
where $d$ is the degree of $\alpha$ in $\overline{R}(G;\alpha,x)$.
\end{thm}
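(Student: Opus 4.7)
The plan is to combine the preceding mirror-image identity $R(D^*;\alpha,x) = R(D;\alpha^{-1},x)$ with a careful bookkeeping of how the normalizing factor $(-\alpha)^{-m}$ transforms under the substitution $\alpha \mapsto \alpha^{-1}$. Let $D$ be a diagram of $G$, and write $R(D;\alpha,x) = \sum_i a_i(x)\,\alpha^i$, with minimum $\alpha$-degree $m$ and maximum $\alpha$-degree $M$. By definition $\overline{R}(G;\alpha,x) = (-\alpha)^{-m} R(D;\alpha,x)$, which is a polynomial in $\alpha$ (and $\alpha^{-1}$-free after normalization) whose $\alpha$-degree is exactly $d = M - m$.

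First I would compute $\overline{R}(G^*;\alpha,x)$ starting from the mirror diagram $D^*$. Since $R(D^*;\alpha,x) = R(D;\alpha^{-1},x) = \sum_i a_i(x)\,\alpha^{-i}$, the minimum $\alpha$-degree of $R(D^*;\alpha,x)$ is $-M$. Hence
\[
\overline{R}(G^*;\alpha,x) = (-\alpha)^{M}\, R(D;\alpha^{-1},x).
\]
Next I would rewrite the right-hand side in terms of $\overline{R}(G;\alpha^{-1},x)$. Performing the substitution $\alpha \mapsto \alpha^{-1}$ in the definition of $\overline{R}(G;\alpha,x)$ gives
\[
\overline{R}(G;\alpha^{-1},x) = (-\alpha^{-1})^{-m} R(D;\alpha^{-1},x) = (-\alpha)^{m} R(D;\alpha^{-1},x),
\]
since $(-\alpha^{-1})^{-1} = -\alpha$. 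Solving for $R(D;\alpha^{-1},x)$ and substituting back yields
\[
\overline{R}(G^*;\alpha,x) = (-\alpha)^{M-m}\, \overline{R}(G;\alpha^{-1},x) = (-\alpha)^{d}\, \overline{R}(G;\alpha^{-1},x).
\]

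Finally, I would invoke the amphicheirality hypothesis: since $G$ is rigid vertex isotopic to its mirror image $G^*$, Theorem on rigid vertex invariance gives $\overline{R}(G;\alpha,x) = \overline{R}(G^*;\alpha,x)$, and the desired identity follows immediately. There is no real obstacle here; the only subtlety is the algebraic manipulation of $(-\alpha^{-1})^{-m} = (-\alpha)^m$ and keeping track of the fact that $d$ equals $M-m$, both of which are dictated by the normalization convention that forces $\overline{R}(G;\alpha,x)$ to start at $\alpha^0$. Once those bookkeeping points are made explicit, the proof is a one-line substitution.
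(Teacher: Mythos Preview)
Your argument is correct and is exactly the intended one: the paper does not spell out a proof but simply states that the theorem follows from the preceding proposition $R(D^*;\alpha,x)=R(D;\alpha^{-1},x)$, and your computation is precisely the bookkeeping that makes this implication explicit. The only content beyond the proposition is the observation that the normalization sends the minimum $\alpha$-degree to $0$, so that $d=M-m$, which you handle correctly.
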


\begin{thm}
Let $G$ be a virtual pliable vertex graph whose maximum degree is at most 3.
If $G$ is amphicheiral (i.e., pliable vertex isotopic to the mirror image of itself), then $\overline{R}(G;\alpha,x)=(-\alpha)^{d}\overline{R}(G;\alpha^{-1},x)$,
where $d$ is the degree of $\alpha$ in $\overline{R}(G;\alpha,x)$.
\end{thm}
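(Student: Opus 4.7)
The plan is to mirror (pun intended) the proof of the immediately preceding theorem for rigid vertex graphs, and simply substitute the pliable vertex version of the invariance statement. Essentially the only additional input needed is the earlier theorem asserting that $\overline{R}(G;\alpha,x)$ is a pliable vertex isotopy invariant whenever the maximum degree of $G$ is at most $3$; everything else is a bookkeeping computation on degrees of $\alpha$.

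First I would pick a diagram $D$ of $G$ and let $D^{*}$ denote its mirror image, so that $D^{*}$ is a diagram of the mirror $G^{*}$. By the proposition just above, $R(D^{*};\alpha,x)=R(D;\alpha^{-1},x)$. Let $m$ and $M$ be, respectively, the minimum and maximum powers of $\alpha$ appearing in $R(D;\alpha,x)$; then $-M$ is the minimum power of $\alpha$ in $R(D^{*};\alpha,x)$. Applying the normalization $\overline{R}(G;\alpha,x)=(-\alpha)^{-m}R(D;\alpha,x)$ on both sides gives
\[
\overline{R}(G^{*};\alpha,x)=(-\alpha)^{M}R(D;\alpha^{-1},x),\qquad \overline{R}(G;\alpha^{-1},x)=(-\alpha)^{m}R(D;\alpha^{-1},x),
\]
where for the second equality I use that $(-\alpha^{-1})^{-m}=(-\alpha)^{m}$. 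Eliminating $R(D;\alpha^{-1},x)$ between these yields
\[
\overline{R}(G^{*};\alpha,x)=(-\alpha)^{M-m}\,\overline{R}(G;\alpha^{-1},x),
\]
and since the normalized polynomial has minimum $\alpha$-degree $0$ and maximum $\alpha$-degree $M-m$, the exponent $M-m$ equals $d$, the degree of $\alpha$ in $\overline{R}(G;\alpha,x)$. Thus in full generality
\[
\overline{R}(G^{*};\alpha,x)=(-\alpha)^{d}\,\overline{R}(G;\alpha^{-1},x).
\]

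Now I would invoke the hypothesis that the maximum degree of $G$ is at most $3$, so by the earlier theorem $\overline{R}(\cdot;\alpha,x)$ is a pliable vertex isotopy invariant on such graphs. Amphichirality of $G$ in the pliable vertex sense means precisely $G\simeq G^{*}$ under pliable vertex isotopy, hence $\overline{R}(G;\alpha,x)=\overline{R}(G^{*};\alpha,x)$. Combining with the displayed identity gives the desired
\[
\overline{R}(G;\alpha,x)=(-\alpha)^{d}\,\overline{R}(G;\alpha^{-1},x).
\]

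The main (and really only) obstacle would be making sure the normalization exponents line up correctly under the substitution $\alpha\mapsto\alpha^{-1}$ and the mirror reflection; in particular, the identity $(-\alpha^{-1})^{-m}=(-\alpha)^{m}$ and the observation that $d=M-m$ must both be used without sign or exponent errors. Once these are handled, the proof is a direct transcription of the rigid vertex case, with the single upgrade that the pliable vertex invariance of $\overline{R}$ holds under the maximum-degree-$3$ assumption rather than unconditionally.
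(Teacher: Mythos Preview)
Your proposal is correct and follows exactly the approach the paper intends: the paper offers no proof beyond the one-line remark that both amphichirality theorems are immediate consequences of the proposition $R(D^{*};\alpha,x)=R(D;\alpha^{-1},x)$, and you have simply written out the straightforward normalization bookkeeping (together with the pliable-vertex invariance of $\overline{R}$ under the degree-$\le 3$ hypothesis) that makes this implication explicit.
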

Next we give the state-space expansion of the generalized Yamada polynomial of a virtual spatial graph diagram.
\begin{figure}
\centering
\includegraphics[width=2.5in]{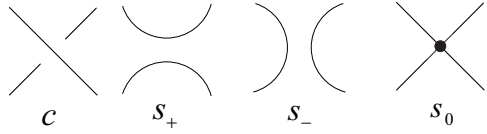}
\caption{The spin set.}
\label{F:spin}
\end{figure}
Let $D$ be a virtual spatial graph diagram and the number of classical crossings of $D$ is $c$.
For a classical crossing of $D$, we define $s_+$, $s_-$ and $s_0$, called the \textit{spin} of $D$, as shown in Figure \ref{F:spin}.
Let $S$ be the cyclic graph obtained from $D$ by replacing each classical crossing with a spin of $\{s_+, s_-, s_0\}$.
$S$ is called a state of $D$, and $\mathscr{S}(D)$ denotes the set of all states of $D$.
If the number of spins $s_+$ and $s_-$ in the state $S$ is $a$ and $b$, respectively, then the number of spins $s_0$ is $c-a-b$.
For a cyclic graph $S=(V(S),E(S))$, we use $|V(S)|$ and $|E(S)|$ to denote the number of vertices and edges in the graph $S$, respectively.
When the context is clear, $|V(S)|$ and $|E(S)|$ are abbreviated as $|V|$ and $|E|$, respectively.
For each $F\subseteq E(S)$, let $\langle F\rangle$ be the spanning subgraph whose vertex set is $V(S)$ and the edge set is $F$.
Let $|F|$ be the number of edges in $F$.
Let $k(F)$ be the number of connected components of $\langle F\rangle$ (that is, the zero-th Betti number of the graph).
Let $bc(F)$ be the number of boundary components of $\langle F\rangle$.
Let $E(S)\backslash F$ be the complement of $F$ in $E(S)$, i.e., $E(S)\backslash F=E(S)-F$.

By Definition \ref{definition1} (4) and (5), we have the \textit{spanning subgraph expansion} of the generalized Yamada polynomial of a cyclic graph $S$ as follows:
$$R(S;\alpha,x)=\sum_{F\subseteq E(S)}
x^{|E(S)\backslash F|}z^{k(F)}\mu^{bc(F)-k(F)}.$$
That is,

\begin{equation}\label{E:spanningsubgraph}
\begin{split}
R(S;\alpha,x)
&=\sum_{F\subseteq E(S)}(-1)^{bc(F)}x^{|E|-|F|+bc(F)-2k(F)}(\alpha+\alpha^{-1}+2)^{bc(F)-k(F)}\\
&=\sum_{F\subseteq E(S)}(-1)^{bc(F)}x^{|E|-|V|+|V|-|F|+bc(F)-2k(F)}(\alpha+\alpha^{-1}+2)^{bc(F)-k(F)}\\
&=\sum_{F\subseteq E(S)}(-1)^{bc(F)}x^{|E|-|V|-2g(F)}(\alpha+\alpha^{-1}+2)^{bc(F)-k(F)}\\
&=x^{|E|-|V|}\sum_{F\subseteq E(S)}(-1)^{bc(F)}x^{-2g(F)}(\alpha+\alpha^{-1}+2)^{bc(F)-k(F)},
\end{split}
\end{equation}
where $|V|-|F|+bc(F)=2k(F)-2g(F)$ is the formula for the genus in an orientable ribbon graph (cyclic graph).

Then we can get the \textit{state-space expansion} of the generalized Yamada polynomial of a virtual spatial graph diagram $D$ as follows:
\begin{eqnarray}\label{E:Dspanningsubgraph}
R(D;\alpha,x)=\sum_{S\in \mathscr{S}(D)}\alpha^{a-b}x^{a+b-c}R(S;\alpha,x).
\end{eqnarray}

When $D$ is a virtual link diagram, we note that each cyclic graph $S$ is a 4-regular graph with $|V|=c-a-b$, $|E|=2(c-a-b)$.
By Eq. (\ref{E:spanningsubgraph}), we can simplify Eq. (\ref{E:Dspanningsubgraph}) as follows:
\begin{eqnarray}\label{E:Dspanningsubgraph1}
R(D;\alpha,x)=\sum_{S\in \mathscr{S}(D)}\alpha^{a-b}\sum_{F\subseteq E(S)}(-1)^{bc(F)}x^{-2g(F)}(\alpha+\alpha^{-1}+2)^{bc(F)-k(F)}.
\end{eqnarray}
Obviously we have the following result.
\begin{corollary}\label{C:powerofx}
Let $D$ be a virtual link diagram. Then the powers of $x$ in $R(D;\alpha,x)$ are even.
\end{corollary}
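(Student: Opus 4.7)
The plan is to read the corollary directly off the state-space expansion (\ref{E:Dspanningsubgraph1}) that is derived immediately before the statement. In that formula
$$R(D;\alpha,x)=\sum_{S\in \mathscr{S}(D)}\alpha^{a-b}\sum_{F\subseteq E(S)}(-1)^{bc(F)}x^{-2g(F)}(\alpha+\alpha^{-1}+2)^{bc(F)-k(F)},$$
the variable $x$ appears only through the factor $x^{-2g(F)}$, where $g(F)$ denotes the genus of the orientable ribbon graph determined by the spanning subgraph $\langle F\rangle$ of the state $S$. Since $g(F)$ is a non-negative integer, every exponent of $x$ contributed to the sum is an even (non-positive) integer, so after collecting terms the only powers of $x$ that can survive in $R(D;\alpha,x)$ are even.

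Thus the proof reduces to pointing out why (\ref{E:Dspanningsubgraph1}) is exactly the right specialization for a virtual link diagram. The one step to verify is that when $D$ has no graphical vertices, every state $S\in\mathscr{S}(D)$ is a $4$-regular cyclic graph with $|V(S)|=c-a-b$ and $|E(S)|=2(c-a-b)$, so the prefactor $x^{|E|-|V|}=x^{c-a-b}$ that appears in the spanning subgraph expansion (\ref{E:spanningsubgraph}) of $R(S;\alpha,x)$ exactly cancels the $x^{a+b-c}$ in (\ref{E:Dspanningsubgraph}), leaving only the even powers $x^{-2g(F)}$. With that cancellation recorded, the parity claim is immediate; no genuine obstacle arises, and the only thing to be careful about is ensuring that the ribbon-graph genus identity $|V|-|F|+bc(F)=2k(F)-2g(F)$ is being applied to an \emph{orientable} ribbon graph (which it is, since the construction of $\mathbf{D}_r$ produces only orientable ribbon graphs).
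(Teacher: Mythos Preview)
Your argument is correct and is exactly the paper's approach: the paper derives formula~(\ref{E:Dspanningsubgraph1}) by observing that for a virtual link diagram each state $S$ is $4$-regular with $|E|-|V|=c-a-b$, so that the prefactor cancels $x^{a+b-c}$, and then simply states the corollary as obvious from the remaining factor $x^{-2g(F)}$. You have spelled out precisely this reasoning.
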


In \cite{Yam}, the Yamada polynomial $R(D)(A)$ of a spatial graph diagram $D$ has the following state-space expansion:
\begin{eqnarray}\label{E:cyp}
R(D)(A)=\sum_{S\in \mathscr{S}(D)}A^{a-b}\sum_{F\subseteq E(S)}(-1)^{k(F)+n(F)}(A+A^{-1}+2)^{n(F)}.
\end{eqnarray}

Note that each state of a spatial graph diagram is a plane graph.
If $S$ is a plane graph, then $g(F)=0$ and $k(F)+n(F)=bc(F)$ for any spanning subgraph $\langle F\rangle$ of $S$.
Hence we have the following corollary by Eq. (\ref{E:Dspanningsubgraph}) and  Eq. (\ref{E:cyp}).

\begin{corollary}\label{C:classicalYP}
Let $D$ be a spatial graph diagram. Then $R(D;A,1)$ coincides with the Yamada polynomial for $D$.
\end{corollary}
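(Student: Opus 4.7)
The plan is to specialize the state-space expansion (Eq.~(\ref{E:Dspanningsubgraph})) of $R(D;\alpha,x)$ at $x=1$ and $\alpha=A$ and match it term-by-term with Yamada's formula (Eq.~(\ref{E:cyp})). The hypothesis that $D$ is a (classical) spatial graph diagram means every state $S\in\mathscr{S}(D)$ is a \emph{plane} cyclic graph, which is exactly the geometric input that makes the two expansions collapse to the same expression.

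First, starting from Eq.~(\ref{E:Dspanningsubgraph}), setting $x=1$ kills the prefactor $x^{a+b-c}$ outside and the factor $x^{|E|-|V|-2g(F)}$ inside Eq.~(\ref{E:spanningsubgraph}) irrespective of $g(F)$. So
\[
R(D;A,1)=\sum_{S\in\mathscr{S}(D)} A^{a-b}\sum_{F\subseteq E(S)}(-1)^{bc(F)}(A+A^{-1}+2)^{bc(F)-k(F)}.
\]
Next, because $S$ is a plane cyclic graph, its associated ribbon graph is planar, and so is every spanning sub-ribbon-graph $\langle F\rangle$; hence $g(F)=0$. Applying the Euler-characteristic identity $|V|-|F|+bc(F)=2k(F)-2g(F)$ used in the derivation of Eq.~(\ref{E:spanningsubgraph}), with $g(F)=0$, yields the purely combinatorial relation
\[
bc(F)=k(F)+\bigl(|F|-|V|+k(F)\bigr)=k(F)+n(F),
\]
where $n(F)$ denotes the nullity (first Betti number) of $\langle F\rangle$. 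This immediately gives $bc(F)-k(F)=n(F)$ and $(-1)^{bc(F)}=(-1)^{k(F)+n(F)}$.

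Substituting these two identities into the previous display produces
\[
R(D;A,1)=\sum_{S\in\mathscr{S}(D)} A^{a-b}\sum_{F\subseteq E(S)}(-1)^{k(F)+n(F)}(A+A^{-1}+2)^{n(F)},
\]
which is precisely the right-hand side of Eq.~(\ref{E:cyp}). Hence $R(D;A,1)$ coincides with the classical Yamada polynomial $R(D)(A)$ of the spatial graph diagram $D$.

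There is no real obstacle here; the only point to verify carefully is the planar Euler identity $bc(F)=k(F)+n(F)$, which is what replaces the genus-carrying term in the general ribbon-graph formula. Everything else is a direct specialization. If one wanted to be meticulous, one could also double-check the signs: since the nullity and the boundary-component count are both non-negative integers, both $(-1)^{bc(F)}$ and $(-1)^{k(F)+n(F)}$ are well-defined and, by the identity above, equal; so no sign discrepancy arises between the two expansions.
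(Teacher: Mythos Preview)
Your proof is correct and follows essentially the same route as the paper: both arguments specialize the state-space expansion (\ref{E:Dspanningsubgraph}) at $\alpha=A$, $x=1$, use that every state of a classical spatial graph diagram is a plane cyclic graph so that $g(F)=0$ and hence $bc(F)=k(F)+n(F)$, and then identify the result with Yamada's expansion (\ref{E:cyp}). Your write-up simply makes explicit the derivation of $bc(F)=k(F)+n(F)$ from the Euler relation, which the paper states without proof.
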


For the two theta spatial graph diagrams in the Figure \ref{twotheta}, their generalized Yamada polynomials $R(G;A,1)$ are: $-A^2-A-2-A^{-1}-A^{-2}$ and $2(A+A^{-1}+1)$.
Thus their normalized generalized Yamada polynomials $\overline{R}(G;A,1)$ are: $-A^4-A^3-2A^2-A-1$ and $2(A^2+1+A)$.
Hence they are neither rigid vertex isotopic nor pliable vertex isotopic virtual spatial graphs.

\begin{rem}
If $D$ is a virtual spatial graph diagram, the generalized Yamada polynomial $R(D;A,1)$ usually does not coincide with the Yamada polynomial for $D$.
\end{rem}

\section{The relationship between generalized Yamada polynomial and a link polynomial}\label{S:Dubrovnik}
\noindent

Kauffman \cite{Kau2-variable} had defined a regular isotopy invariant for a classic link diagram $l$ (2-regular spatial graph).
$D(l;a,z)$ is a special case of the Kauffman polynomial.
$D(l;a,z)$ is called Dubrovnik polynomial.
$D(l;a,z)$ can be obtained from the following skein relation.
\begin{enumerate}
\item [(1)] $D(\Across)-D(\Bcross)= z [D(\Bsmooth)-D(\Asmooth)]$
\item [(2)] $D(\positivecurl)=aD(\downarc)$
\item [(3)] $D(\negativecurl)=a^{-1}D(\downarc)$
\item [(4)] $D(\bigcirc K)=[1+\frac{(a-a^{-1})}{z}]D(K)$,~~~$D(\bigcirc)=1+\frac{(a-a^{-1})}{z}.$
\end{enumerate}
The above definition is different from the original definition, with $D(\bigcirc)=1$ in the original definition.
According to the Definition \ref{definition1} of the generalized Yamada polynomial $R(l;\alpha,x)$ of the virtual spatial graph diagram, the following skein relations can be obtained.
\begin{enumerate}
\item [(1)] $R(\Across)-R(\Bcross)= (\alpha-\alpha^{-1}) [R(\Bsmooth)-R(\Asmooth)]$
\item [(2)] $R(\positivecurl)=\alpha^2 R(\downarc)$
\item [(3)] $R(\negativecurl)=\alpha^{-2}R(\downarc)$
\item [(4)] $R(\bigcirc K)=(1+\alpha+\alpha^{-1})R(K)$,~~~$R(\bigcirc)=1+\alpha+\alpha^{-1}$
\end{enumerate}
Thus $R(l;\alpha,x)$ satisfies the recursive definition of the Dubrovnik polynomial.

\begin{lem}\label{l:detectclassical}
Let $l$ be a classical link diagram, then $R(l;\alpha,x)=D(l;\alpha^2,\alpha-\alpha^{-1}),$ where $a=\alpha^2$ and $z=\alpha-\alpha^{-1}$ in the Dubrovnik polynomial.
That is, for a classical link diagram $l$, there is no $x$ term in $R(l;\alpha,x)$.
\end{lem}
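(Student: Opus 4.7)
The plan is to recognize that $R$, when restricted to classical link diagrams, satisfies the defining skein system of the Dubrovnik polynomial $D$ under the substitution $a=\alpha^{2}$, $z=\alpha-\alpha^{-1}$, and then to invoke uniqueness of the Dubrovnik invariant on classical links. Since $D(l;\alpha^{2},\alpha-\alpha^{-1})$ carries no $x$-dependence, the identity will immediately imply that $R(l;\alpha,x)$ has no $x$ term.

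First I would justify the four bulleted skein identities stated immediately before the lemma. The crucial one is the Conway-type identity: subtracting the expansions of $R(\Across)$ and $R(\Bcross)$ given by Definition \ref{definition1}(2)--(3), the two $x^{-1}\onevertex$ contributions cancel exactly, leaving $R(\Across)-R(\Bcross)=(\alpha-\alpha^{-1})[R(\Bsmooth)-R(\Asmooth)]$, which is the Dubrovnik skein with $z=\alpha-\alpha^{-1}$. The two curl identities follow by applying a single crossing expansion to the curl and then reducing the resulting self-loop and one-vertex-with-loop pieces via Proposition \ref{P:calculate} together with the normalized variable values from Lemma \ref{RM2}. The disjoint-circle value $R(\bigcirc)=1+\alpha+\alpha^{-1}$ is a direct computation from Proposition \ref{P:calculate}(1) with the fixed specializations $y=1$, $z=-x^{-1}$, $\mu=-(\alpha+\alpha^{-1}+2)x$, and Proposition \ref{P:cupvee}(1) upgrades this to $R(\bigcirc K)=(1+\alpha+\alpha^{-1})R(K)$. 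One checks that $1+(a-a^{-1})/z$ evaluates to the same quantity under the substitution.

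Next I would appeal to the standard uniqueness theorem for the Dubrovnik polynomial on classical link diagrams: the invariant is pinned down by the crossing skein together with the curl identities and the value on the unknot, via induction on the crossing number combined with the usual crossing-switch reduction to a descending (hence unknotted) diagram. Because $R$ on classical link diagrams and $D(\,\cdot\,;\alpha^{2},\alpha-\alpha^{-1})$ satisfy exactly the same base case and recursion, the two invariants must agree, which is the content of the lemma.

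The one subtlety I would emphasize is that the raw recursion for $R$ involves the planar vertex graph $\onevertex$, an object with no counterpart in the Dubrovnik world. The cancellation in the derivation of the Conway-type identity is precisely what keeps the entire computation inside the class of classical link diagrams: when $R$ is evaluated on a classical $l$ by iteratively switching and resolving crossings via the skein relation, the one-vertex term never needs to be separately evaluated. Confirming that the crossing-switch reduction can always be carried through without re-introducing vertex terms is the only real obstacle, and it is automatic because the skein identity relates four purely classical (2-regular) diagrams.
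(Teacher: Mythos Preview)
Your proposal is correct and follows essentially the same approach as the paper. The paper's proof is a single sentence—``for classical links, variable $x$ does not appear because we can just calculate $R(l;\alpha,x)$ from skein relations above''—which implicitly relies on exactly the uniqueness argument you spell out; your version simply makes explicit the induction on crossing number via descending diagrams and the cancellation of the $x^{-1}\onevertex$ term that the paper leaves to the reader.
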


\begin{proof}
Note that for classical links, variable $x$ does not appear because we can just calculate $R(l;\alpha,x)$ from skein relations above.
\end{proof}

\begin{rem}\label{R:detectclassical}
The above Lemma \ref{l:detectclassical} can often be used to determine if a virtual link is non-classical.
\end{rem}

It is well known that the Dubrovnik polynomial cannot be generalized directly to virtual links because its skein relations are not sufficient to calculate the polynomials of virtual link diagrams.
Note that some virtual diagrams can not be unknotted or unlinked by switching classical crossings.
Nevertheless, the generalized Yamada polynomial $R(l;\alpha,x)$ specializes to a version of the Dubrovnik polynomial for virtual link diagrams.
That is, we can first resolve classical crossings via skein relations until we can not continue to use the skein relations.
Then we turn to formulas (2)(3) in Definition \ref{definition1} to resolve the remaining classical crossings.
In this way the graphical expansion of the generalized Yamada polynomial provides a method to decide evaluations that are ambiguous to just the Dubrovnik skein relation.
These structures may act as a guide to finding a full formulation of the Dubrovnik polynomial for virtual knots and links.

By calculating, we have
$R(0_1;\alpha,x)=R(\bigcirc)=\alpha+\alpha^{-1}+1.$
Let \virtualtrefoil ~be a diagram of virtual trefoil knot (the mirror of virtual knot $2_1$ in \cite{Gr}), then
$R(\virtualtrefoil;\alpha,x)=\alpha^3+3\alpha^2+5\alpha-4\alpha^{-1}-3\alpha^{-2}+\alpha^{-4}-x^{-2}(\alpha-\alpha^{-1})$, which contains $x$.
Thus virtual trefoil knot is non-trivial and non-classical.
Let \virtualizedtrefoil ~be a diagram of virtualized trefoil knot (virtual knot $3_7$ in \cite{Gr}), then
\begin{eqnarray*}
R(\virtualizedtrefoil;\alpha,x)&=&-\alpha^{-4}-2\alpha^{-3}+3\alpha^{-2}+2\alpha^{-1}+\alpha-2\alpha^{2}+2\alpha^{3}+\alpha^{4}-\alpha^{5}\\
& &+x^{-2}(\alpha^{-2}-3\alpha^{-1}+3\alpha-\alpha^{2}),
\end{eqnarray*}
which contains $x$.
Thus virtualized trefoil knot is non-trivial and non-classical.
We are sure that the generalized Yamada polynomial detects non-classicality for many link diagrams with a single virtualization that have unit Jones polynomial (it was generally proved by Dye \emph{et al.} in \cite{DKK}), but it is a project to compute more of them. We would like to find a general theorem about this situation.
Note that $D(\virtualhopflink)$ cannot be calculated with the above definition of Dubrovnik polynomial.
However, we can obtain the polynomial of a virtual Hopf link by the recursive definition of the generalized Yamada polynomial $R(l;\alpha,x)$.
Let \virtualhopflink ~be a diagram of virtual Hopf link, then $R(\virtualhopflink;\alpha,x)=
(\alpha+\alpha^{-1}+1)^2+(\alpha+\alpha^{-1}+2)-x^{-2}$, which contains $x$.
Thus the virtual Hopf link is non-trivial and non-classical.

\section{Applications and calculations}\label{S:Application}
\noindent

In this Section, we consider a special case of $R(D;\alpha,x)$ which corresponds to an evaluation of the 2-cabled bracket polynomial with Jones-Wenzl projector $P_2$.

Kauffman defined the bracket polynomial of the classical link diagrams using the skein relations, giving a new model of the Jones polynomial.
In \cite{VKT}, Kauffman extended the bracket polynomial of classical links to a bracket polynomial for virtual links.
Thus, the classical Jones polynomial is extended to the Jones polynomial of virtual links.

For a virtual link diagram $D$, the $\textit{bracket polynomial}$ $\langle D\rangle=\langle D\rangle(A,B,d)$ \cite{VKT} can be defined recursively by using the following three relations:
\begin{equation*}
\centering
\includegraphics[width=2.0in]{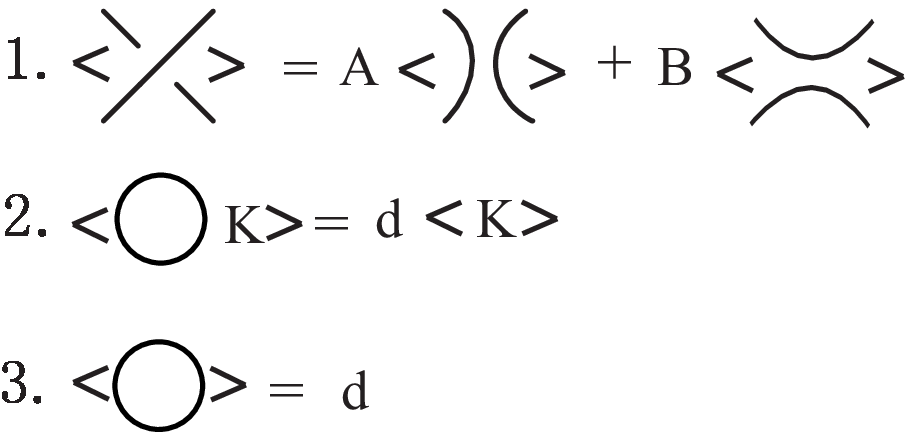}
\end{equation*}
Here it is understood that the three small diagrams are parts of otherwise identical larger diagrams and $\bigcirc$ denotes a connected diagram with no classical crossings.
There is also a state expansion for the bracket polynomial as follows.

\begin{thm}(\cite{VKT})
The bracket polynomial of a virtual link diagram $D$ is a polynomial in three variables $A,B,d$ defined by the formula
$$\langle D\rangle(A,B,d)=\sum_{\sigma\in \mathscr{S}(D)}A^{\alpha(\sigma)}B^{\beta(\sigma)}d^{|\sigma|-1},$$
\noindent where $\alpha(\sigma)$ and $\beta(\sigma)$ are the numbers of $A$-smoothings ($s_{+}$) and $B$-smoothings ($s_{-}$) (see the spin $s_{+}$ and $s_{-}$ in Figure \ref{F:spin}, respectively) in a state $\sigma$, respectively, $\mathscr{S}(D)$ denotes the set of states of $D$, and $|\sigma|$ is the number of closed curves in $\sigma$.
\end{thm}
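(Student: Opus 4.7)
The plan is to prove the identity by induction on $c(D)$, the number of classical crossings of $D$. For the base case $c(D)=0$, the diagram $D$ consists of finitely many closed curves generically immersed in the plane with only virtual crossings. Then $\mathscr{S}(D)$ consists of a single empty state $\sigma$ with $\alpha(\sigma)=\beta(\sigma)=0$ and $|\sigma|$ equal to the number of closed curves of $D$. Iterating the relation $\langle \bigcirc K\rangle = d\langle K\rangle$ together with the normalization $\langle\bigcirc\rangle = 1$ gives $\langle D\rangle = d^{|\sigma|-1}$, which is exactly the right-hand side of the claimed formula.

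For the inductive step, fix any classical crossing of $D$ and let $D_A$ and $D_B$ denote the diagrams obtained from $D$ by performing the $s_+$ and $s_-$ smoothings at that crossing, respectively; both diagrams have $c(D)-1$ classical crossings. The first skein relation for the bracket polynomial reads
\[
\langle D\rangle = A\,\langle D_A\rangle + B\,\langle D_B\rangle.
\]
Applying the inductive hypothesis to each of $\langle D_A\rangle$ and $\langle D_B\rangle$ rewrites the right-hand side as a sum of two state-sums, and I would combine them via the canonical bijection
\[
\mathscr{S}(D) \;\longleftrightarrow\; \mathscr{S}(D_A)\sqcup \mathscr{S}(D_B)
\]
that reads off the smoothing choice at the distinguished crossing. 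Under this bijection the exponents $\alpha(\sigma), \beta(\sigma)$ agree with those of the corresponding states on $D_A$ or $D_B$ at every other crossing, while the prefactors $A$ and $B$ in the skein expansion account precisely for the new $A$- or $B$-smoothing at the distinguished crossing; the closed-curve count $|\sigma|$ is likewise inherited, since the relevant smoothings are purely local modifications that commute with the remaining ones. Substituting produces the claimed state-sum formula.

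The argument is essentially Kauffman's original state-sum expansion transferred to virtual diagrams. No serious obstacle arises in the virtual setting because virtual crossings never appear in the skein relations, and a fully smoothed state is just a collection of closed curves whose component count $|\sigma|$ is a well-defined integer; the Detour move ensures that this count is topologically meaningful even in the presence of virtual crossings. The one place requiring a moment of care is checking that the bijection $\mathscr{S}(D)\leftrightarrow \mathscr{S}(D_A)\sqcup\mathscr{S}(D_B)$ preserves all the relevant statistics, but this is immediate because the smoothings at distinct classical crossings are independent local choices.
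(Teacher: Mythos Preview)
Your proof is correct and is precisely the standard induction on the number of classical crossings that establishes the state-sum formula for the bracket polynomial. The paper itself does not supply a proof of this theorem: it is quoted as a known result from \cite{VKT}, so there is no ``paper's own proof'' to compare against beyond the original reference, which argues in essentially the same way you do.
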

In particular, it is well known that if $B=A^{-1}$, $d=-(A^2+A^{-2})$ then $\langle D\rangle$ is invariant under moves (II), (III) and moves (I*)-(IV*).
\vskip0.5cm

The $n$-strand Temperley-Lieb algebra $TL_n$ is an algebra in $\mathbb{C}[d]$;
the elements of the algebra are generated by 1-dimensional sub-manifolds of a rectangle.
Each sub-manifold intersects the top and bottom of the rectangle in $n$ vertices.
If the two sub-manifolds are isotopic keeping the boundary fixed, then we call them equivalent.
Deleting a simple closed curve is equivalent to generating a multiple of $d$.
The product between two sub-manifold\textcolor[rgb]{0.00,0.07,1.00}{s} is represented by a vertical stack of rectangles.
Let $\{1_n,e_1,e_2,...,e_{n-1}\}$ be the generator set of algebra $TL_n$ as illustrated in Figure \ref{F:TLAgenerators}.
Each element in $TL_n$ can be represented as a linear combination of the product of these generators.
For any $i,j=1,2,...,n-1$, the generators satisfy the following relations:
\begin{equation}\label{E:TLArelation}
\begin{split}
&1_ne_i=e_i=e_i1_n \\
&e_i^2=d~ e_i\\
&e_ie_j=e_je_i \ \ {if~} |i-j|\geq 2\\
&e_ie_{i\pm1}e_i=e_{i}\\
\end{split}
\end{equation}

\begin{figure}[!htbp]
\centering
\includegraphics[width=8cm]{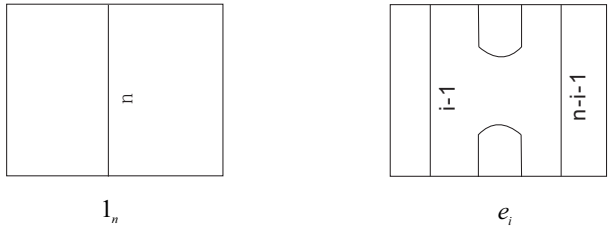}
\caption{The generator set of $TL_n$.}
\label{F:TLAgenerators}
\end{figure}

In $TL_2$, we have the Jones-Wenzl projector $P_2=1_2-\frac{1}{d}e_1$, and $P_2^{2}=P_2$, $P_2e_1=0$
as shown in Figure \ref{F:projector2}. Next we apply $P_2$ to the virtual spatial graph diagram to get a special parameter for the generalized Yamada polynomial.

\begin{figure}
\centering
\includegraphics[width=11cm]{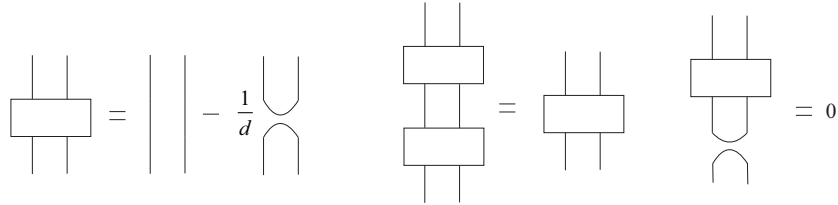}
\caption{The diagrammatic definition and properties of $P_2$.}
\label{F:projector2}
\end{figure}

With Jones-Wenzl projector $P_2$, we shall construct a new diagram from a virtual spatial graph diagram as follows.
First, for a virtual spatial graph diagram $D$, we shall resolve each vertex and crossing with $P_2$ and replace each arc of the graph as shown in Figure \ref{F:edgetranslate}.
After converting all vertices and crossing of $D$, we denote $D^{p}$ \textit{the virtual link diagram associated with $D$ with projector $P_2$}, see Figure \ref{F:G50} for an example.
Note that there are two $P_2$s on each arc of $D$ in $D^p$ and recall that the arcs of $D$ start and end at classical crossings and vertices.
In particular, when $D$ is $K_1$, i.e. $1-$th order empty graph, $D^{p}$ is a unknot (free loop).

\begin{figure}[ht]
\centering
\includegraphics[height=1.8cm]{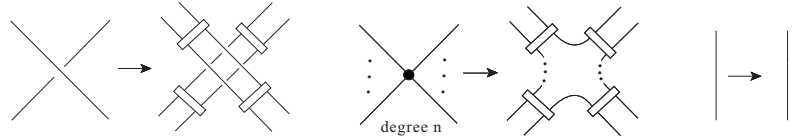}
\caption{Resolve each arc, each vertex and each classical crossing of a virtual spatial graph diagram.}
\label{F:edgetranslate}
\end{figure}

\begin{figure}[!htbp]
\centering
\includegraphics[width=8cm]{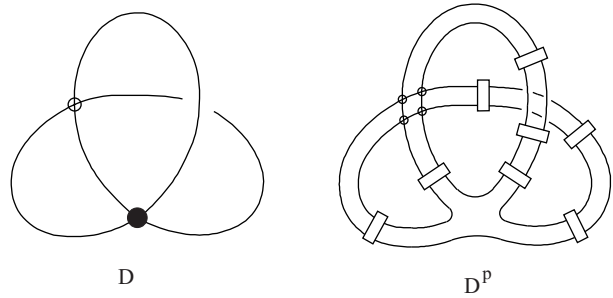}
\caption{With $P_2$, the virtual spatial graph $D$ is converted to $D^p$.}
\label{F:G50}
\end{figure}

We call $\langle D^p\rangle(A,A^{-1},d)$ the \textit{2-cabled bracket polynomial of $D$ with projector $P_2$}, where $d=-A^{2}-A^{-2}$.

In \cite{KauLins}, Kauffman and Lins discussed about applying $P_2$ to classical link diagrams and obtained a formula which is extended to virtual spatial graph diagrams here.

\begin{proposition}\label{P:PhiYamada}
\begin{eqnarray}\label{F:2strandformula}
\langle\cablecrossing\rangle &=& A^4 \langle\cableA\rangle +A^{-4}\langle\cableB\rangle+(A^{2}+A^{-2})\langle\cablev\rangle,
\end{eqnarray}
where the four small diagrams are parts of otherwise identical larger virtual link diagrams with projectors, and every term is the bracket polynomial of the diagram which is understood to fully expand each $P_{2}$ and represents a linear combination of some virtual link diagrams.
\end{proposition}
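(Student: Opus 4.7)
The plan is to expand the four classical sub-crossings that make up the $2$-cabled crossing \cablecrossing\ using the Kauffman bracket skein relation, and then collapse the resulting state sum using the defining properties $P_2^2 = P_2$ and $P_2 e_1 = e_1 P_2 = 0$ of the Jones-Wenzl projector recorded in Figure \ref{F:projector2}.

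First I would label the four classical sub-crossings $X_1,\dots,X_4$ inside \cablecrossing\ and apply the bracket skein $\langle X_i\rangle = A\,\langle s_A(X_i)\rangle + A^{-1}\langle s_B(X_i)\rangle$ to each of them. Expanding all four at once yields a state sum over $2^4=16$ smoothed cable tangles, each weighted by $A^{4-2k}$, where $k$ is the number of B-smoothings chosen in that state. The two homogeneous states are immediate: choosing the A-smoothing at every sub-crossing produces the cabled A-smoothing \cableA\ with weight $A^4$, and choosing the B-smoothing at every sub-crossing produces \cableB\ with weight $A^{-4}$. These supply the first and second summands on the right-hand side.

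Next I would handle the $14$ mixed states. Using $P_2^2 = P_2$, I would insert an extra $P_2$ on each internal arc so that every sub-crossing sits between four projectors; this preserves the bracket value. A mixed smoothing necessarily reroutes at least one pair of adjacent strands into a horizontal cap-cup (an $e_1$); whenever that cap-cup is incident to a $P_2$ sitting on a boundary arc of the tangle, the identity $e_1 P_2 = 0$ kills the term. A direct planar case-check then eliminates all $14$ mixed states \emph{except} four: two states with a single B-smoothing among three A-smoothings whose cap-cup forms an internal closed loop, and symmetrically two states with a single A among three B's. In each survivor, the internal closed loop contributes a factor $d=-(A^2+A^{-2})$ which cancels against the normalization $-d^{-1}$ introduced by expanding $P_2 = 1_2 - d^{-1} e_1$, leaving a clean evaluation on the vertex cable \cablev. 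Summing the four surviving contributions—two of weight $A^2$ and two of weight $A^{-2}$—produces $(A^2+A^{-2})\langle\cablev\rangle$, the third summand.

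The main obstacle is the careful bookkeeping for the $14$ mixed states and the interplay between internal loop factors $d=-(A^2+A^{-2})$ and the $-d^{-1}$ coefficient hidden inside each $P_2$. Inserting extra projectors via idempotency and then using the boundary-killing property $e_1 P_2 = 0$ reduces the analysis to a finite planar picture that can be verified diagrammatically, rather than by a $16$-term algebraic expansion. As a conceptual sanity check one can view the two-sided application of $P_2$ as restricting the $2$-cabled braiding to an endomorphism of $V_{(2)}\otimes V_{(2)}=V_{(0)}\oplus V_{(2)}\oplus V_{(4)}$; by Schur's lemma this endomorphism algebra is $3$-dimensional, which matches the three-term structure on the right-hand side and explains why exactly three surviving resolution types are expected.
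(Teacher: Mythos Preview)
Your overall strategy---expand the four sub-crossings with the bracket skein and then simplify using $P_2^2=P_2$ and $e_1P_2=0$---is exactly the route the paper has in mind; indeed the paper's proof simply cites the classical computation in \cite{KauLins} and adds one sentence noting that Detour moves do not interfere with the projectors, so the identity survives in the virtual setting. You should add that last remark explicitly, since the proposition is stated for \emph{virtual} link diagrams with projectors.

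There is, however, a genuine arithmetic gap in your treatment of the mixed states. You assert that exactly four mixed states survive, two with weight $A^{2}$ and two with weight $A^{-2}$, each carrying an internal loop whose factor $d$ ``cancels'' a $-d^{-1}$ from a $P_2$-expansion, and that the sum is $(A^{2}+A^{-2})\langle\cablev\rangle$. But $2A^{2}+2A^{-2}\neq A^{2}+A^{-2}$, and $d\cdot(-d^{-1})=-1$, not $+1$, so your cancellation cannot produce the stated coefficient from those four terms alone. The actual bookkeeping is more delicate: the surviving smoothings are not yet equal to $\cablev$ as diagrams, and expressing them in terms of $\cablev$ requires further expansion of the boundary projectors (the $-d^{-1}e_1$ summand of each $P_2$), which introduces additional terms and signs that you have not tracked. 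In short, the ``direct planar case-check'' you invoke is precisely the nontrivial part of the computation, and as written it does not close. Either carry out the full $16$-term expansion honestly (writing each state in the basis $\{\cableA,\cableB,\cablev\}$ after expanding all projectors), or use the three-dimensionality argument you mention at the end together with three independent closures to solve for the coefficients; the representation-theoretic sanity check alone is not a proof.
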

\begin{proof}
When there are virtual crossings in the diagram, Eq. (\ref{F:2strandformula}) is still true since Detour moves do not affect the action of projector $P_2$. So the proof is similar to  the original one in \cite{KauLins}.
\end{proof}

According to the definition of $P_2$ and the above Proposition \ref{P:PhiYamada}, the following theorem can be obtained.

\begin{thm}\label{T:PhiYamada1}
Let $D$ be a virtual spatial graph diagram and $D^p$ be the virtual link diagram associated with $D$ with projector $P_2$.
Then $$R(D;A^4,-d^{-1})=d\langle D^p\rangle(A,A^{-1},d),$$
where $d=-A^{2}-A^{-2}.$
\end{thm}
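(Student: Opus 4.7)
The plan is to prove the identity by induction on the number of classical crossings of $D$, reducing to the case of a cyclic graph (no classical crossings) where both sides can be compared via their state--sum expansions.

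First, I would handle the inductive step at a classical crossing. Substituting $\alpha = A^{4}$ and $x = -d^{-1}$ in Definition \ref{definition1}(2)--(3) yields
$$R(\Across) \;=\; A^{4}R(\Bsmooth) + A^{-4}R(\Asmooth) + (A^{2}+A^{-2})\,R(\onevertex),$$
since $x^{-1}=-d=A^{2}+A^{-2}$. Proposition \ref{P:PhiYamada} supplies the matching skein relation for the bracket of the cabled diagram $D^{p}$ with exactly the same coefficients $A^{4}$, $A^{-4}$, $A^{2}+A^{-2}$. Multiplying that relation by $d$ and invoking the inductive hypothesis $R(\cdot) = d\langle\cdot^{p}\rangle$ on the three simpler diagrams on the right--hand side (each with one fewer classical crossing) produces the identity at the crossing. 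In the process one must verify that the labelled smoothings \cableA, \cableB, \cablev in Proposition \ref{P:PhiYamada} correspond, under the $P_{2}$--cabling, to \Bsmooth, \Asmooth, \onevertex in the $R$--skein; this is exactly the action of $P_{2}$ on a $2$-strand cable.

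Second, when $D=S$ is a cyclic graph, I would compare state expansions directly. With the specializations at hand, $z=-x^{-1}=d$ and $\mu=-(\alpha+\alpha^{-1}+2)x=d$ (using $\alpha+\alpha^{-1}+2=(A^{2}+A^{-2})^{2}=d^{2}$), so formula (\ref{E:spanningsubgraph}) collapses to
$$R(S;A^{4},-d^{-1}) \;=\; \sum_{F\subseteq E(S)} (-d^{-1})^{|E(S)\setminus F|}\, d^{\,bc(F)}.$$
On the bracket side, I would expand each $P_{2}=1_{2}-d^{-1}e_{1}$ on the arcs of $S^{p}$ (the idempotency $P_{2}^{2}=P_{2}$ collapses duplicate projectors and in particular handles absorption of degree--$2$ vertices). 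Each edge of $S$ then contributes either a pair of parallel strands (coefficient $1$) or a cap--cup (coefficient $-d^{-1}$); the set of "parallel" edges picks out a subset $F\subseteq E(S)$. The resulting configuration is a disjoint union of simple closed curves, so the bracket evaluates to $d^{L(F)-1}$ where $L(F)$ is the number of curves. The crucial geometric claim is $L(F)=bc(F)$: the loops of this state are exactly the boundary components of the sub-ribbon-graph of $\textbf{G}_{r}$ built from the vertex discs of $S$ together with the edges in $F$. Granting this, $\langle S^{p}\rangle=\sum_{F}(-d^{-1})^{|E(S)\setminus F|}d^{\,bc(F)-1}$, and multiplying by $d$ matches the left--hand side term by term.

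The main obstacle is the loop-counting identity $L(F)=bc(F)$, which requires checking that the vertex and arc resolution in Figure \ref{F:edgetranslate} implements precisely the cyclic-order stitching that defines $\textbf{G}_{r}$ from the cyclic graph $S$. This is standard ribbon-graph lore but needs care at degree--$2$ vertices (handled by $P_{2}^{2}=P_{2}$) and at virtual crossings, where the Detour move is used exactly as in the proof of Proposition \ref{P:PhiYamada}. The base case $D=K_{1}$ is immediate: $D^{p}$ is a free loop, $\langle D^{p}\rangle=1$, and $d\langle D^{p}\rangle=d=(-x)^{-1}$, which matches $R(K_{1};A^{4},-d^{-1})$ from Definition \ref{definition1}(5).
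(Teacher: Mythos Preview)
Your proposal is correct and follows essentially the same strategy as the paper's proof: match the crossing skein (your inductive step is exactly the paper's item (2) via Proposition~\ref{P:PhiYamada}), match edge deletion/marking with the $e_{1}/1_{2}$ summands of $P_{2}$ (the paper's item (3), your state--sum in the cyclic case), handle degree--$2$ vertices via $P_{2}^{2}=P_{2}$ (item (1)), and check $z=\mu=d$ (item (4)). Your version is simply more explicitly organized as an induction with a term--by--term comparison of the spanning--subgraph expansion against the projector expansion, and your loop--count identity $L(F)=bc(F)$ makes precise what the paper's brief item (4) asserts.
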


\begin{proof}
Note that during the process of the calculation of $d\langle D^p\rangle(A,A^{-1},d)$,  we shall first use the formula in Proposition \ref{P:PhiYamada} to deal with the classical crossings of $D$, then expand all projectors and record the number of closed curves.

It is sufficient to prove that the Definition \ref{definition1} of $R(D;A^4,-d^{-1})$ satisfies the calculation process of $d\langle D^p\rangle(A,A^{-1},d)$ .

(1) The degree 2 vertex has no effect on $R(D;A^4,-d^{-1})$ and the degree 2 vertex can also be removed in calculating $d\langle D^p\rangle(A,A^{-1},d)$ by using $P_2^2=P_2$.

(2) Reducing a classical crossing in $R(D;A^4,-d^{-1})$ is equivalent to reducing the corresponding four classical crossings in $D^p$ as the formula given in Proposition \ref{P:PhiYamada}.

(3) Reducing an edge in $R(D;A^4,-d^{-1})$ is equivalent to expanding the corresponding $P_2$ in $D^p$, that is, the deletion of an edge corresponds to choose $e_1$ of $P_2$ and the marking of an edge corresponds to choose $1_2$ of $P_2$.

(4) The relation $z=\mu=d$ also satisfies the value of a trivial knot and each circle in the calculation of $d\langle D^p\rangle$.
\end{proof}

\begin{rem}\label{R:Gp0}
According to Lemma \ref{l:detectclassical}, for a classical link diagram $l$, $R(l;A^4,-d^{-1})=D(l;A^8,A^4-A^{-4})=D(l;\alpha^2,\alpha-\alpha^{-1})_{|\alpha=A^4}$, which means that the power of each $A$ is a multiple of four.
Furthermore, for any virtual link diagram, we can obtain that the power of each $A$ of $R(l;A^4,-d^{-1})$ is a multiple of four by Corollary \ref{C:powerofx}.
\end{rem}

According to the relation $R(D;A^4,-d^{-1})=d\langle D^p\rangle(A,A^{-1},d)$, we write a program for calculating generalized Yamada polynomials based on \textit{Mathematica} code.

\noindent\textbf{Program 1:}\\
1. \text{rule1} $=\{Y[a\_, b\_, c\_, d\_, e\_, f\_, g\_, h\_,bc\_,de\_,fg\_,ha\_]$
$:>A^{-4}*X[a, b, c, d]*X[g, h, e, f] +A^{4}*X[a, b, g, h]*X[c, d, e, f] -dd*X[a, b, bc, ha]*X[c, d, bc, de]*X[e, f, fg, de]*X[g, h, ha, fg]\};$\\
2.  \text{rule2} $=\{X[a\_, b\_, c\_, d\_] :> del[a~ d]*del[b~ c]-1/dd*del[a~ b]*del[c~ d]\};$\\
3.  \text{rule3} $=\{del[a\_ b\_] del[b\_ c\_] :> del[a~c]\};$\\
4.  \text{rule4} $=\{(del[\_])^2 :> dd, del[\_^2] :> dd\};$\\
5.  \text{rule5} $=\{dd :> -A^2 - A^{-2}\};$\\
6.  \text{YamadaPoly}$[t\_]:=\\
\text{Simplify}[\text{Expand}[(t /. \text{rule1} /. \text{rule2} // \text{Expand}) //. \text{rule3} /.\text{rule4} /. \text{rule5}]$\\

\noindent where $X[a\_, b\_, c\_, d\_]$ denotes the 4 endpoints of a projector $P_2$ counterclockwise as shown in Figure \ref{F:codeingforD}(4)(usually $X[a\_, b\_, c\_, d\_]$ can be used to denote a classical crossing of a link diagram),
$Y[a\_, b\_, c\_, d\_, e\_, f\_, g\_, h\_,bc\_,de\_,fg\_,ha\_]$ denotes a classical crossing of a virtual spatial graph diagram as shown in Figure \ref{F:codeingforD}(4).
That is, there contains three steps for coding process in calculating generalized Yamada polynomial $R(D;A^4,-d^{-1})$ as shown in Figure \ref{F:codeingforD}(1)(2)(3).
(1) Put two projectors on each arc between two classical crossings and one projector on each arc between a classical crossing and a vertex in the diagram. (2) Use the letters $a,b,c,d$, etc. to mark the two sides of the arc between one classical crossing and another classical crossing and mark mark the region between every two half edges of each vertex in the diagram. (3) For each classical crossing in the diagram, by listing the information of the four semi-arcs forming the classical crossings,
we record the projector labels counter-clockwise which start from over-crossing half arc and firstly meets the A-region as follows:
$Y[d,c,b,a,e,$
$g,h,j,cb,ae,gh,jd]$,
where the newly last 4 labels will be used to
calculate the state graphs obtained after choosing spin $s_0$ for the classical crossing as shown in last term of identity of Proposition 5.2.
For the projector on the arc between two vertices in the graph,
we consecutively record the labels near a vertex counter-clockwise, and list the 4 endpoints of the projector as follows: $X[g,f,k,h]$.
\begin{figure}[!htbp]
\centering
\includegraphics[width=12cm]{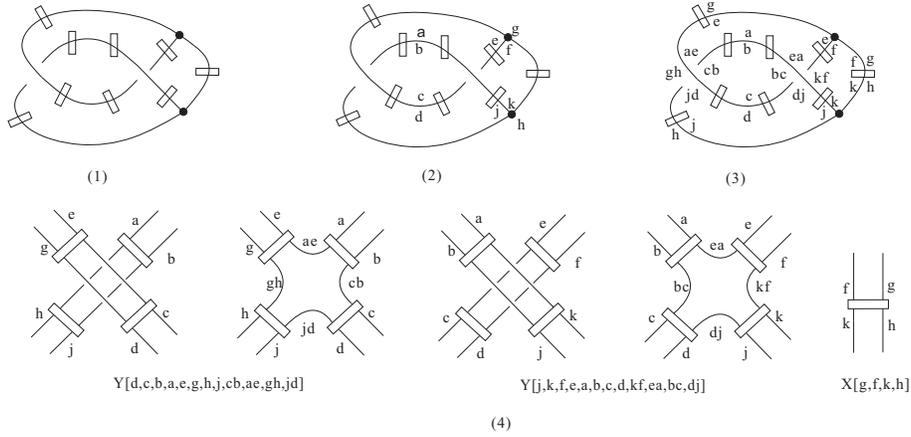}
\caption{The coding process in calculating generalized Yamada polynomial $R(D;$ $A^4,$ $-d^{-1}).$}
\label{F:codeingforD}
\end{figure}

An example is illustrated as in Figure \ref{F:codeingforD},
the virtual spatial graph diagram $D$ can be entered as:
\begin{equation*}
\begin{aligned}
&GraphG = \\
&Y[d,c,b,a,e,g,h,j,cb,ae,gh,jd]
Y[j,k,f,e,a,b,c,d,kf,ea,bc,dj]
X[g,f,k,h];
\end{aligned}
\end{equation*}
Once the graph $D$ is encoded, the command $YamadaPoly[t]$ will produce a value of generalized Yamada polynomial.
For example, the command $YamadaPoly[GraphG]$ will generate the value of generalized Yamada polynomial for the graph in Figure \ref{F:codeingforD}.
\vskip0.5cm
\textbf{Example 1:} Using $R(D; A^4, -d^{-1})$ to detect the virtual spatial graph diagram in Figure \ref{F:G50} and its mirror are not equivalent since $R(D; A^4, -d^{-1})$ is not equal to $R(D^*; A^4, -d^{-1})$ up to multiplication by some $(-A^4)^m$. Its coding is illustrated as in Figure \ref{F:exampleDp}.

\begin{figure}[!htbp]
\centering
\includegraphics[width=12cm]{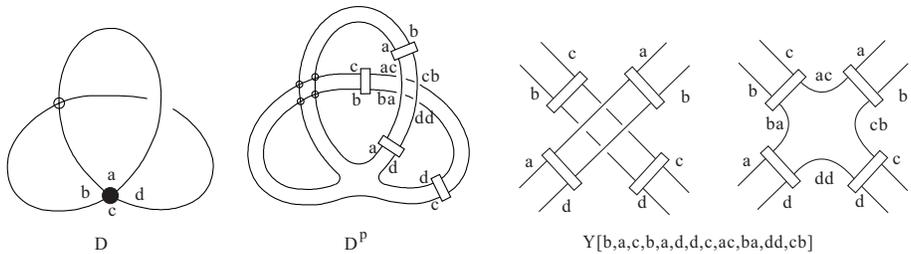}
\caption{The coding of the virtual spatial graph diagram in Figure \ref{F:G50}.}
\label{F:exampleDp}
\end{figure}

Input: \text{VirtualgraphD} =Y[b,a,c,b,a,d,d,c,ac,ba,dd,cb];\\
 \text{YamadaPoly}[\text{VirtualgraphD}]\\
Output: $-\frac{A^{-6}(1+3A^{4}+3A^{8}+2A^{12})}{1+A^4}$.
\vskip0.5cm

Input:  \text{VirtualgraphD1} = Y[c,b,a,d,d,c,b,a,ba,dd,cb,ac];\\
 \text{YamadaPoly}[\text{VirtualgraphD}]\\
Output: $-\frac{A^{2}(2+3A^{4}+3A^{8}+A^{12})}{1+A^4}$.

\begin{rem}
When the parameter $x\neq -d^{-1},\alpha\neq A^{4}$, the program is no longer valid.
In calculating a generalized Yamada polynomial, the difficulty is that the contribution of some circles in the boundary component is $z$ and the other is $\mu$.
But currently we cannot resolve this kind of difficulty in the Mathematica program.
It may be possible to use other methods to write programs which can be used to compute the full generalized Yamada polynomial.
\end{rem}

\section{Acknowledgements}

Xian'an Jin is supported by NSFC (No. 11671336), President's Funds
of Xiamen University (No. 20720160011).
Qingying Deng is supported by the China Scholarships Council (Grant Nos. 201706310009).
Louis H. Kauffman is supported by the Laboratory of Topology and Dynamics, Novosibirsk State University (contract no. 14.Y26.31.0025 with the Ministry of Education and Science of the Russian Federation).

 \end{document}